\numberwithin{equation}{section}
\newtheorem{theorem}[equation]{Theorem}
\newtheorem{lemma}[equation]{Lemma} 
\newtheorem{proposition}[equation]{Proposition}
\newtheorem{problem}[equation]{Problem}
\theoremstyle{definition}
\newtheorem{definition}[equation]{Definition} 
\theoremstyle{remark}
\newtheorem{remark}[equation]{Remark} 
\newtheorem{remarks}[equation]{Remarks} 
\newtheorem{example}[equation]{Example}
\newcounter{num}
\newenvironment{numerate}%
{\begin{list}{\hskip\labelsep\hskip\parindent(\roman{num})}%
{\usecounter{num}%
\setlength\leftmargin{0em}\setlength\topsep{0em}%
\setlength\parsep{0em}\setlength\partopsep{0em}%
\setlength\itemsep{0em}\setlength\labelwidth{0em}}}%
{\end{list}}
\definecolor{ceruleanblue}{rgb}{0.16, 0.32, 0.75}
\newcommand\group[1]{{\text{\bf#1}}}
\newcommand\SO{\group{SO}}
\newcommand\GL{\group{GL}}
\newcommand\U{\group{U}}
\newcommand\Sp{\group{Sp}}
\newcommand\bb{\mathbb}
\newcommand\Z{\bb{Z}} 
\newcommand\Q{\bb{Q}}
\newcommand\R{\bb{R}} 
\newcommand\C{\bb{C}}
\newcommand\ca{\mathcal}
\newcommand\F{\ca{F}}
\renewcommand\H{\ca{H}}
\newcommand\lie{\mathfrak}
\newcommand\X{\lie{X}}
\newcommand\abs[1]{\lvert#1\rvert}
\newcommand\inner[1]{\langle#1\rangle}
\newcommand\qu[1][\kern.3ex]{/\kern-.7ex/_{\kern-.4ex#1}}
\newcommand\bigqu[1][\,\,]{\big/\kern-.85ex\big/_{\!\!#1}}
\DeclareMathOperator\Ad{Ad}
\DeclareMathOperator\diag{diag}
\DeclareMathOperator\Lie{Lie}
\DeclareMathOperator\per{per}
\DeclareMathOperator\Per{Per}
\DeclareMathOperator\pr{pr}
\DeclareMathOperator\rank{rank}
\DeclareMathOperator\ray{ray}
\DeclareMathOperator\tr{tr}
\newcommand\longto{\longrightarrow} 
\newcommand\bas{{\mathrm{bas}}}
\renewcommand\Re{\operatorname{Re}}
\renewcommand\Im{\operatorname{Im}}
\newcommand\note[1]{$^\dag$\marginpar{\footnotesize{$^\dag${#1}}}}
\newcommand\hM{M\kern-0.7em\widehat{\phantom I}\kern0.3em}
\begin{document}

\title[Conformal symplectic convexity]{The convexity package for
  Hamiltonian actions on conformal symplectic manifolds}

%---------------------------------------
\author{Youming Chen}

\address{Y. Chen\\School of Science\\Chongqing University of
  Technology\\Chongqing 400054, China}

\curraddr{Department of Mathematics\\Pennsylvania State
  University\\USA}

\email{youmingchen@cqut.edu.cn}

%================================
\author{Reyer Sjamaar}

\address{R. Sjamaar\\Department of Mathematics\\Cornell
  University\\Ithaca, NY 14853-4201, USA}

\email{sjamaar@math.cornell.edu}

%=================================
\author{Xiangdong Yang}

\address{X. Yang\\Department of Mathematics\\Chongqing
  University\\Chongqing 401331, China}

\curraddr{Department of Mathematics\\Cornell University\\Ithaca, NY
  14853-4201, USA}

\email{xy373@cornell.edu}

%--------------------------------------

\thanks{This work was partly supported by the National Natural Science
  Foundation of China (grants No.\ 11571242 and No.\ 11701051) and the
  China Scholarship Council.}

\subjclass[2010]{53D20; 58D19}

\keywords{Conformal symplectic manifolds; moment maps; convexity
  properties.}

\date{\today}

%\dedicatory{ }

%\commby{}

%%%%%%%%%%%%%%%%%%%%%%%%%%%%%%%%%%%%%%%%%%%%%%%%%%%%%%%%%%%%%%%%%%%%%%%%

\begin{abstract}
Consider a Hamiltonian action of a compact connected Lie group on a
conformal symplectic manifold.  We prove a convexity theorem for the
moment map under the assumption that the action is of Lee type, which
establishes an analog of Kirwan's convexity theorem in conformal
symplectic geometry.
\end{abstract}

%%%%%%%%%%%%%%%%%%%%%%%%%%%%%%%%%%%%%%%%%%%%%%%%%%%%%%%%%%%%%%%%%%%%%%%%

\maketitle

\tableofcontents

%%%%%%%%%%%%%%%%%%%%%%%%%%%%%%%%%%%%%%%%%%%%%%%%%%%%%%%%%%%%%%%%%%%%%%%%
\section{Introduction}
%%%%%%%%%%%%%%%%%%%%%%%%%%%%%%%%%%%%%%%%%%%%%%%%%%%%%%%%%%%%%%%%%%%%%%%%

One of the most interesting results concerning the geometry of
Hamiltonian actions of compact Lie groups on symplectic manifolds is
the convexity theorem for the moment map image.  An early instance of
this theorem is Kostant's result~\cite{Ko73} of 1973 for torus actions
on conjugacy classes and flag manifolds.  In 1982, Atiyah~\cite{At82}
and Guillemin-Sternberg~\cite{GS82} independently proved the convexity
theorem in the case of torus actions.  For an action of a non-abelian
Lie group the moment map image is in general not convex.  However,
Guillemin-Sternberg~\cite{GS82} showed that if $M$ is an integral
K\"{a}hler manifold the quotient space of the moment image by the
coadjoint $K$-action is convex and they conjectured the same result
for general symplectic manifolds.  Subsequently, Kirwan \cite{Ki84}
confirmed Guillemin and Sternberg's conjecture in 1984.  The
symplectic convexity theorem has versions for many other geometric
objects, such as symplectic orbifolds, contact manifolds, generalized
complex manifolds, presymplectic manifolds, and so on (see
e.g.\ \cite{LT97,Le02,Ni09,CK10,RZ17,LS17,GMPS17}).

From a conformal point of view, the closest geometric structure to a
symplectic structure is a \emph{(locally) conformal(ly) symplectic}
structure.  This type of structure was first introduced by
Lee~\cite{Le43} and studied by Libermann~\cite{Li55},
Lefebvre~\cite{Le69}, Gray-Hervella~\cite{GH80}, Vaisman \cite{Va85},
Eliashberg-Murphy~\cite{EM15}, and Chantraine-Murphy~\cite{CM16}.
(The term ``locally conformally symplectic'' is current in much of the
literature, but for the sake of brevity we will follow \cite{EM15} and
say ``conformal symplectic''.)  Conformal symplectic manifolds serve
as natural phase spaces for certain problems in mechanics, such as
Gaussian isokinetic dynamics and Nos\'{e}-Hoovers dynamics
(cf.\ \cite{WL98}).  Conformal symplectic geometry relates closely to
several other geometries.  For instance, it is known that many
non-K\"{a}hler compact complex surfaces admit locally conformal
K\"ahler forms; see~\cite{Be00,OVV18} and the references therein.
Conformal symplectic manifolds are nothing but even-dimensional
transitive Jacobi manifolds; see~\cite{GL84}.  A conformal symplectic
manifold of the first kind is a special example of a contact pair and
therefore possesses an intrinsic transversely symplectic foliation;
see~\cite{BK11}.  Thus the geometry of conformal symplectic manifolds
of the first kind is a special type of transversely symplectic
geometry.

Belgun-Goertsches-Petrecca~\cite{BGP18} have recently extended the
convexity theorem of Atiyah and Guillemin-Sternberg to Hamiltonian
torus actions on conformal symplectic manifolds.  The purpose of this
paper is to generalize their result to the nonabelian case.

\begin{theorem}\label{theorem;main}
Let $K$ be a compact connected Lie group which acts on a connected
strict conformal symplectic manifold $(M,\omega,\theta)$ in a
Hamiltonian fashion.  Assume that the $K$-action is of Lee type and
that the moment map $\Phi\colon M\to\lie{k}^*$ is proper, where
$\lie{k}=\Lie(K)$.  Choose a maximal torus $T$ of $K$ and a closed
Weyl chamber $C$ in $\lie{t}^*$, where $\lie{t}=\Lie(T)$.
\begin{enumerate}
\item
The fibres of $\Phi$ are connected and $\Phi\colon M\to\Phi(M)$ is an
open map.
\item
$\Delta(M)=\Phi(M)\cap C$ is a closed convex polyhedral set.
\end{enumerate}
\end{theorem}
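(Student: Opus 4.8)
The plan is to reduce the nonabelian statement to the abelian convexity theorem for Hamiltonian torus actions on conformal symplectic manifolds of Belgun--Goertsches--Petrecca~\cite{BGP18}, and then to feed the resulting local information into a topological local-to-global convexity principle of the type developed by Condevaux--Dazord--Molino and Hilgert--Neeb--Plank. Both assertions of the theorem will emerge simultaneously as the output of that principle: (i) the connectedness of the fibres and the openness of $\Phi$ onto its image, and (ii) the fact that $\Delta(M)=\Phi(M)\cap C$ is a closed convex polyhedral set. The organizing object is the map induced by $\Phi$ from $M$ to the chamber $C$, which we identify with the quotient $\lie{k}^*/K$ by the coadjoint $K$-action.

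First I would establish a conformal symplectic cross-section theorem. For each open face $\sigma$ of $C$, let $K_\sigma$ be the centralizer of a point of $\sigma$ and $\lie{k}_\sigma=\Lie(K_\sigma)$. The slice $Y_\sigma=\Phi^{-1}(U_\sigma)$, where $U_\sigma$ is the $K$-saturation of a suitable $K_\sigma$-invariant neighbourhood of $\sigma$, should be a $K_\sigma$-invariant conformal symplectic submanifold on which $K_\sigma$ acts in a Hamiltonian fashion of Lee type, the restricted moment map being the $\lie{k}_\sigma^*$-component of $\Phi$. The decisive consequence is that, writing $Y=Y_{\operatorname{int}C}$ for the principal cross-section on which only the maximal torus $T$ acts, one has $\Phi(M)\cap\operatorname{int}C=\Phi_T(Y)\cap\operatorname{int}C$, where $\Phi_T$ is the induced $T$-moment map. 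This converts the structure over the interior of the chamber into a purely abelian problem to which~\cite{BGP18} applies, and the smaller cross-sections $Y_\sigma$ treat the boundary faces. The Lee-type hypothesis is precisely what guarantees that each cross-section inherits a genuine \emph{strict} conformal symplectic structure and that the Lee vector field stays tangent to $Y_\sigma$.

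The heart of the argument is then local. Using a Marle--Guillemin--Sternberg-type normal form adapted to the twisted (Lichnerowicz) differential $d_\theta=d-\theta\wedge(\,\cdot\,)$, I would show that in a neighbourhood of any $K$-orbit the moment map agrees, up to the abelian model supplied by~\cite{BGP18}, with the moment map of a linear Hamiltonian action. This yields the \emph{local convexity data}: $\Phi$ is locally open onto a locally convex image, the local fibres are connected, and the local image in $\lie{t}^*$ is an intersection of finitely many half-spaces, i.e.\ locally polyhedral. Properness of $\Phi$ enters here to make each fibre compact and to turn $\Phi$ into a closed map onto a closed subset of $\lie{k}^*$, which supplies the local compactness and completeness required by the global step.

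Finally I would invoke the local-to-global convexity principle for the induced proper map $M\to C$: a proper map that is locally open with connected fibres onto locally convex polyhedral sets is automatically open onto a closed convex polyhedral image, with connected fibres globally. Applying this gives (i) and (ii) at once. The main obstacle I anticipate is the construction of the conformal symplectic slice in the second step: because $\omega$ is not closed, the usual equivariant Darboux and coisotropic-embedding techniques are unavailable, and one must arrange the Lee form to be invariant and show that its de Rham class is preserved under the reduction, so that the slice is again strict conformal symplectic and the Lee-type condition descends to $K_\sigma$. Controlling the conformal factor consistently along orbits — equivalently, handling the $1$-cocycle produced by the non-closedness of $\omega$ — is the delicate point that the Lee-type hypothesis is designed to resolve, and I expect the bulk of the technical work to lie there.
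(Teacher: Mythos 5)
Your global skeleton---local convexity data fed into the local--global principle of \cite[Theorem~3.10]{HNP94}, with properness of $\Phi$ making the induced map $M\to C$ proper---is the same as the paper's, but both of your local steps have genuine gaps. First, the step you yourself defer as ``the bulk of the technical work,'' a Marle--Guillemin--Sternberg normal form adapted to $d_\theta$, is never constructed, and without it there is no local convexity data at all. Moreover, the obstacle you cite (no equivariant Darboux or coisotropic embedding techniques for non-closed forms) dissolves once one makes the observation that replaces all of this machinery in the paper (Proposition~\ref{proposition;local-model}): on an equivariant tubular neighbourhood $V$ of any orbit the Lee form is basic (Proposition~\ref{proposition;equivariant-moment}), hence exact with a $K$-invariant primitive $f$ by Koszul's theorem on the acyclicity of the basic complex of a tube; therefore $\Omega=e^f\omega|_V$ is an honest $K$-invariant \emph{symplectic} form with equivariant moment map $\Psi=e^f\Phi|_V$, and the existing symplectic local convexity theorem \cite[Theorem~6.5]{Sj98} applies verbatim. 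No twisted normal-form theory, and in fact no cross-sections, are needed.

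Second, and more fundamentally, even granting your normal form you never explain how convexity of the local image of the model (gauge-transformed) moment map yields convexity of the local image of $\Phi$ itself: the two differ by the nonconstant positive factor $e^{-f}$, and multiplication by such a factor destroys convexity in general---which is exactly why the moment body $\Delta(M)$ is not a conformal invariant. Supplying this passage is the true role of the Lee-type hypothesis, which you misassign: by Lemma~\ref{lemma;lee} there is a \emph{central} Lee element $\zeta$, by Lemma~\ref{lemma;affine-plane} the image $\Phi(M)$ lies in the affine hyperplane $\H_\zeta$, hence $\Phi|_V=\varrho\circ\Psi$ where $\varrho$ is the central projection~\eqref{equation;rescale}, and Lemma~\ref{lemma;rescale} shows that $\varrho$ carries the symplectic local cones to convex polyhedral cones while preserving openness and fibre connectedness. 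Your stated role for Lee type (strictness of the cross-sections and tangency of the Lee field) is both beside this point and unsupported: the restriction of a non-exact closed $1$-form to a cross-section can perfectly well be exact, and nothing in your argument would break if it were. Finally, the appeal to \cite{BGP18} on the principal cross-section is not justified: the cross-section is a noncompact open subset of $M$ on which the induced torus moment map is in general not proper, so the hypotheses of the abelian convexity theorem fail precisely where you invoke it; in the symplectic literature this is why cross-section proofs use only \emph{local} statements on the cross-section, never the global abelian theorem.
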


%Here is an outline of the paper.  In Section~\ref{sec-2} we introduce
%notation and review the definition of a locally conformal symplectic
%structure.  In Section~\ref{sec-3}, we introduce Hamiltonian actions
%on locally conformal symplectic manifolds and establish a local
%model. 

The proof this theorem, which is a variation on the argument
of~\cite{BGP18}, is contained in Section~\ref{section;convex}.  We
also state a version for global conformal symplectic manifolds,
Theorem~\ref{theorem;global}, and a version for conformal
presymplectic manifolds,
Theorem~\ref{theorem;conformal-presymplectic}, in
Appendix~\ref{section;conformal-presymplectic}.  In
Section~\ref{section;conformal-pre} we investigate the relation
between the moment map image of $M$ and that of its symplectic
covering manifolds $\tilde{M}$, and obtain
Theorems~\ref{theorem;rank-1} and~\ref{theorem;rank-2}, which also
generalize results for torus actions obtained in~\cite{BGP18}.  In
addition we introduce a foliation of $M$ into presymplectic manifolds,
which we call the \emph{Lee foliation}, and we prove a stability
result, Theorem~\ref{theorem;lee-leaf}, which combines a form of Reeb
stability with a stability statement for the moment map images of the
leaves of the Lee foliation.  In Section~\ref{section;kind}, we
present an alternative proof of the convexity theorem for conformal
symplectic manifolds of the first kind from the viewpoint of
transverse symplectic geometry
(Theorem~\ref{theorem;conformal-first}).  The properness hypothesis on
the moment map in Theorem~\ref{theorem;main} must not be taken too
seriously.  It is violated in some cases of interest where the
conclusion of the theorem nevertheless holds.  A few such examples are
discussed in Section~\ref{section;examples} and
Appendix~\ref{section;real}.

%%%%%%%%%%%%%%%%%%%%%%%%%%%%%%%%%%%%%%%%%%%%%%%%%%%%%%%%%%%%%%%%%%%%%%%%
\subsection*{Acknowledgements}
%%%%%%%%%%%%%%%%%%%%%%%%%%%%%%%%%%%%%%%%%%%%%%%%%%%%%%%%%%%%%%%%%%%%%%%%

Youming Chen and Xiangdong Yang would like to thank the Departments of
Mathematics at Pennsylvania State University and Cornell University,
respectively, for their hospitality and their excellent working
environment.  They would like to express their great gratitude to the
China Scholarship Council for financially supporting their visits.
Reyer Sjamaar and Xiangdong Yang are grateful to Yi Lin for many
fruitful discussions.

%%%%%%%%%%%%%%%%%%%%%%%%%%%%%%%%%%%%%%%%%%%%%%%%%%%%%%%%%%%%%%%%%%%%%%%%
\section{Preliminaries}\label{section;preliminary}
%%%%%%%%%%%%%%%%%%%%%%%%%%%%%%%%%%%%%%%%%%%%%%%%%%%%%%%%%%%%%%%%%%%%%%%%

In this section we establish some notational conventions and present a
brief review of conformal symplectic structures.

%%%%%%%%%%%%%%%%%%%%%%%%%%%%%%%%%%%%%%%%%%%%%%%%%%%%%%%%%%%%%%%%%%%%%%%%
\subsection*{Notation and conventions}
%%%%%%%%%%%%%%%%%%%%%%%%%%%%%%%%%%%%%%%%%%%%%%%%%%%%%%%%%%%%%%%%%%%%%%%%

Throughout this paper $K$ will denote a compact connected Lie group
with Lie algebra $\lie{k}$ and $M$ will denote a paracompact smooth
manifold on which $K$ acts smoothly.  We denote by $\X(M)$ the Lie
algebra of smooth vector fields on $M$.  The stabilizer of a point
$m\in M$ is denoted by $K_m$ and the orbit through $m$ by $K\cdot m$.
The vector field on $M$ generated by the infinitesimal action of
$\xi\in\lie{k}$ is denoted by $\xi_M$.  A differential form
$\alpha\in\Omega^{p}(M)$ is \emph{horizontal} if
$\iota(\xi_M)\alpha=0$ for all $\xi\in\lie{k}$.  A $K$-invariant
horizontal form $\alpha$ is said to be \emph{basic} with respect to
the $K$-action.  We denote by $\Omega^{p}_{K,\bas}(M)$ the space of
basic $p$-forms.  Since the exterior differential operator $d$
preserves the basic forms we get a complex $(\Omega^*_{K,\bas}(M),d)$.
The associated cohomology, denoted by $H^*_{K,\bas}(M)$, is called the
\emph{basic cohomology} of the $K$-manifold~$M$.

We denote by $\R_{>0}$ the multiplicative group of positive real
numbers and by $\ca{C}^\infty(M,\R_{>0})$ the group of smooth
positive-valued functions on $M$.

Let $V$ be a finite-dimensional vector space over $\R$.  A
\emph{closed affine halfspace} is a subset of $V$ defined by an
inequality $\inner{\phi,v}\ge c$ with $\phi\in V^*$ and $c\in\R$.  A
\emph{convex polyhedral set} is the intersection of a locally finite
collection of closed affine halfspaces in $V$.  A \emph{convex
  polyhedron} is the intersection of finitely many closed affine
halfspaces.  A \emph{convex polytope} is a bounded convex polyhedron.
Suppose $V$ is defined over $\Q$.  A closed affine halfspace is
\emph{rational} (resp.\ \emph{semirational}) if it is defined by an
inequality $\inner{\phi,v}\ge c$ with $\phi\in V_\Q^*$ and $c\in\Q$
(resp.\ $\phi\in V_\Q^*$ and $c\in\R$).  A \emph{rational}
(resp.\ \emph{semirational}) convex polyhedral set is the intersection
of a locally finite collection of rational (resp.\ semirational)
closed affine halfspaces in~$V$.  Thus a semirational convex
polyhedral set has a rational normal vector at each facet, but its
vertices may not be rational.

%%%%%%%%%%%%%%%%%%%%%%%%%%%%%%%%%%%%%%%%%%%%%%%%%%%%%%%%%%%%%%%%%%%%%%%%
\subsection*{Conformal symplectic structures}
%%%%%%%%%%%%%%%%%%%%%%%%%%%%%%%%%%%%%%%%%%%%%%%%%%%%%%%%%%%%%%%%%%%%%%%%

Let $M$ be a smooth manifold.  An \emph{almost symplectic form} is a
non-degenerate $2$-form on $M$.  An almost symplectic form $\omega$ is
\emph{conformal symplectic} if there exist an open covering
$\{U_\lambda\}_{\lambda\in\Lambda}$ of $M$ and a family of smooth
functions $\{f_\lambda\colon U_\lambda\to\R\}$ such that
$\exp(f_\lambda)\cdot(\omega|_{U_\lambda})$ is symplectic.  The
non-degeneracy of $\omega$ implies $df_\lambda=df_{\mu}$ on
$U_\lambda\cap U_{\mu}$.  Consequently, the forms $\{df_\lambda\}$
glue to a globally defined closed 1-form $\theta$, which satisfies
$d\omega=-\theta\wedge\omega$.  If $\dim M\ge4$, the form $\theta$ is
uniquely determined by $\omega$.

\begin{definition}
A \emph{conformal symplectic structure} on $M$ is a pair
$(\omega,\theta)$, where $\omega$ is an almost symplectic form and
$\theta$ a closed $1$-form with the property that
$d\omega+\theta\wedge\omega=0$.  The form $\theta$ is the \emph{Lee
  form} and its cohomology class $[\theta]\in H^1(M;\R)$ is the
\emph{Lee class} of the conformal symplectic structure.  A conformal
symplectic structure $(\omega,\theta)$ is \emph{strict} if $\theta$ is
not exact, and \emph{global} if $\theta$ is exact.  A \emph{conformal
  symplectic manifold} is a triple $(M,\omega,\theta)$, where
$(\omega,\theta)$ is a conformal symplectic structure on $M$.
\end{definition}

The Lee form associated with a conformal symplectic structure defines
a flat connection on the trivial line bundle $M\times\R$, or
equivalently on the trivial principal bundle $M\times\R_{>0}$.  The
group of gauge transformations of this principal bundle is the group
$\ca{C}^\infty(M,\R_{>0})$ of smooth positive-valued functions.  The
gauge action of a function $a>0$ on a conformal symplectic structure
$(\omega,\theta)$ is given by
\[a\cdot(\omega,\theta)=(a\omega,\theta-d\log a).\]
Conformal symplectic structures related by the gauge action are called
\emph{conformally equivalent}.  A conformal symplectic structure
$(\omega,\theta)$ is global if and only if it is conformally
equivalent to a symplectic structure, namely
$e^f\cdot(\omega,\theta)=(e^f\omega,0)$ if $\theta=df$.

It is known that every compact almost symplectic manifold
$(M,\omega_0)$ with nonzero cohomology group $H^1(M;\Z)$ admits a
conformal symplectic structure.  More precisely, the $h$-principle for
conformal symplectic manifolds of Eliashberg-Murphy~\cite[Theorem
  1.8]{EM15} states that for each nonzero class $\Theta\in H^1(M;\Z)$
and for all sufficiently large $c>0$ there exists a conformal
symplectic form $\omega$ which is homotopic to $\omega_0$ through
almost symplectic forms and has Lee class equal to
$[\theta]=c\,\Theta$.

Let $(M,\omega,\theta)$ be a conformal symplectic manifold.  The
connection $\theta$ defines a covariant derivative, also called the
\emph{twisted differential},
$d_\theta\colon\Omega^*(M)\to\Omega^{*+1}(M)$ given by
\[d_\theta\alpha=d\alpha+\theta\wedge\alpha.\]
By definition $\omega$ and $\theta$ are $d_\theta$-closed.  For a
vector field $X\in\X(M)$ the \emph{twisted Lie derivative}
$L_\theta(X)\colon\Omega^*(M)\to\Omega^*(M)$ is the operator given by
\[L_\theta(X)\alpha=L(X)\alpha+\theta(X)\alpha.\]
These operators satisfy the usual rules of the Cartan differential
calculus, namely
\begin{equation}\label{equation;cartan}
\begin{aligned}
{[\iota(X),\iota(Y)]}&=0,&\quad
[L_\theta(X),L_\theta(Y)]&=L_\theta([X,Y]),\\
[L_\theta(X),d_\theta]&=0,&\quad[L_\theta(X),\iota(Y)]&=\iota([X,Y]),\\
[d_\theta,d_\theta]&=0,&\quad[\iota(X),d_\theta]&=L_\theta(X)
\end{aligned}
\end{equation}
for all $X$, $Y\in\X(M)$, where the square brackets denote graded
commutators.  In particular, $(\Omega^*(M),d_\theta)$ is a cochain
complex, the cohomology of which we denote by $H_\theta^*(M)$.  The
Lie algebra of vector fields $\X(M)$ has several subspaces of
interest, including
\begin{align*}
\X_\omega(M)&=\{\,X\in\X(M)\mid L(X)\omega=L(X)\theta=0\,\},\\
\X_{(\omega,\theta)}(M)&=\{\,X\in\X(M)\mid L_\theta(X)\omega=0\,\},\\
\X_{[\omega]}(M)&=\{\,X\in\X(M)\mid\text{$L(X)\omega=f\omega$,
  $L(X)\theta=-df$ for some $f\in\ca{C}^\infty(M)$}\,\}.
\end{align*}
Clearly, $\X_\omega(M)$ and $\X_{[\omega]}(M)$ are Lie subalgebras,
and it follows from~\eqref{equation;cartan} that
$\X_{(\omega,\theta)}(M)$ is a Lie subalgebra as well.  Plainly we
have the inclusions
\[
\X_\omega(M)\subseteq\X_{[\omega]}(M),\qquad
\X_{(\omega,\theta)}(M)\subseteq\X_{[\omega]}(M).
\]
The condition $L(X)\theta=0$ in the definition of $\X_\omega(M)$ is
usually superfluous.  Namely if $L(X)\omega=0$, then
\[
0=dL(X)\omega=L(X)d\omega=-L(X)\theta\wedge\omega,
\]
so $L(X)\theta=0$ and $X\in\X_\omega(M)$, provided that $\dim(X)\ge4$.
Similarly, the condition $L(X)\theta=-df$ in the definition of
$\X_{[\omega]}(M)$ is superfluous if $\dim(X)\ge4$.

The \emph{(twisted) Hamiltonian vector field} of a function
$f\in\ca{C}^\infty(M)$ is the vector field $X_f$ defined by
$\iota(X_f)\omega=d_\theta f$.  In conformal symplectic geometry
Hamiltonian vector fields do in general not preserve the conformal
symplectic form.  Instead we have $L_\theta(X_f)\omega=d_\theta^2f=0$,
so $X_f\in\X_{(\omega,\theta)}(M)$.  In other words, the map $f\mapsto
X_f$ is a map
\begin{equation}\label{equation;lie-hom}
\ca{C}^{\infty}(M)\longto\X_{(\omega,\theta)}(M).
\end{equation}
An important role is played by the Hamiltonian vector field of the
constant function $1$, which we will call the \emph{(symplectic) Lee
  vector field} and denote by $A=X_1$.  It is easy to show that
\[
\iota(A)\theta=0,\qquad L(A)\theta=0,\qquad L(A)\omega=0.
\]
As in symplectic geometry, the \emph{bracket} of two functions $f$,
$g\in\ca{C}^\infty(M)$ defined by
\[\{f,g\}=\omega(X_f,X_g)\]
makes $\ca{C}^\infty(M)$ a Lie algebra, and the
map~\eqref{equation;lie-hom} is a Lie algebra homomorphism,
i.e.\ $X_{\{f,g\}}=[X_f,X_g]$.  We will repeatedly use the following
elementary result proved by Vaisman.

\begin{proposition}[{\cite[Proposition~2.1]{Va85}}]
\label{proposition;strict}
Let $(M,\omega,\theta)$ be a connected strict conformal symplectic
manifold.  Then $d_\theta\colon\ca{C}^\infty(M)\to\Omega^1(M)$ is
injective.  Hence the homomorphism~\eqref{equation;lie-hom} is
injective and $H_\theta^0(M)=0$.
\end{proposition}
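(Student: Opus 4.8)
The plan is to first establish injectivity of $d_\theta$ on functions, and then deduce the two stated consequences as immediate corollaries. For the main point, I would analyze the equation $d_\theta f=df+\theta\wedge f=df+f\theta=0$, which says $df=-f\theta$. The key observation is that this is a \emph{linear} constraint on $f$, so it can be integrated along paths. Given any smooth path $\gamma\colon[0,1]\to M$, set $g(t)=f(\gamma(t))$; the chain rule together with $df=-f\theta$ yields the scalar linear ODE $g'(t)=-g(t)\,\theta(\dot\gamma(t))$, whose unique solution is
\[
g(t)=g(0)\exp\Bigl(-\int_0^t\theta(\dot\gamma(s))\,ds\Bigr).
\]
Hence along $\gamma$ the function $f$ either vanishes identically (if $g(0)=0$) or is nowhere zero.

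Next I would globalize this dichotomy using connectedness of $M$. The preceding computation shows that both $\{m\mid f(m)=0\}$ and $\{m\mid f(m)\neq0\}$ are open (any point can be joined by a smooth path to nearby points, along which $f$ keeps or loses the property of vanishing), so one of them is empty. If $f$ is nowhere zero, then, being continuous on a connected space, it has constant sign, and the relation $df=-f\theta$ rearranges to $\theta=-d\log\abs{f}$, exhibiting $\theta$ as an exact form. This contradicts the hypothesis that $(\omega,\theta)$ is \emph{strict}. Therefore $f\equiv0$, which proves that $d_\theta\colon\ca{C}^\infty(M)\to\Omega^1(M)$ is injective.

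Finally I would read off the two remaining assertions. Because $\omega$ is non-degenerate, the defining relation $\iota(X_f)\omega=d_\theta f$ shows that $X_f=0$ forces $d_\theta f=0$ and hence $f=0$ by the injectivity just proved; so the homomorphism~\eqref{equation;lie-hom} has trivial kernel and is injective. For the cohomological statement, $H^0_\theta(M)$ is by definition the kernel of $d_\theta$ acting on $0$-forms, which is precisely $\ker\bigl(d_\theta\colon\ca{C}^\infty(M)\to\Omega^1(M)\bigr)$, and this is zero.

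The crux of the whole argument is the passage from the local/path-wise behaviour of $f$ to a global conclusion, and it is exactly here that strictness of $\theta$ enters: it is what rules out the nowhere-vanishing alternative by forbidding $\theta$ from being exact. Everything else — the ODE integration, the openness of the two sets, and the two corollaries — is routine once this dichotomy is in place.
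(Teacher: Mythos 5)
Your proof is correct. The paper itself gives no proof of this proposition, citing Vaisman's Proposition~2.1 instead; the argument there rests on the same dichotomy you establish — the zero set of $f$ is both open and closed, so by connectedness $f$ either vanishes identically or vanishes nowhere — with strictness ruling out the second alternative because it would exhibit $\theta=-d\log\abs{f}$ as exact. The only cosmetic difference is how the dichotomy is obtained: the standard route uses local primitives $\theta=du$ and the integrating factor identity $d_\theta f=e^{-u}d(e^{u}f)$, so that $e^{u}f$ is locally constant, whereas you integrate the same linear equation $df=-f\theta$ along paths; these are equivalent, and your deduction of the two corollaries is also correct.
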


In contrast to the symplectic case the bracket $\{\cdot,\cdot\}$ does
not satisfy the Leibniz identity and so is not a Poisson bracket
(cf.\ \cite[Section 2]{Va85}).  The deficiency of the Leibniz identity
is also manifested in the fact that $d_\theta$ is not a derivation.
In fact, we have $[\Pi,\Pi]=2A\wedge\Pi$, where $\Pi$ is the
$2$-vector field dual to $\omega$, and so the pair $(\Pi,A)$ defines a
Jacobi structure on $M$ in the sense of Lichnerowicz~\cite{Li78}.  It
was proved in~\cite{GL84} (see also~\cite[\S\,3]{DLM91}) that
conformal symplectic structures are precisely the even-dimensional
transitive Jacobi structures, whereas contact forms are the
odd-dimensional transitive Jacobi structures.

%%%%%%%%%%%%%%%%%%%%%%%%%%%%%%%%%%%%%%%%%%%%%%%%%%%%%%%%%%%%%%%%%%%%%%%%
\section{Hamiltonian actions and the local model}
\label{section;hamilton}
%%%%%%%%%%%%%%%%%%%%%%%%%%%%%%%%%%%%%%%%%%%%%%%%%%%%%%%%%%%%%%%%%%%%%%%%

In this section $(M,\omega,\theta)$ denotes a connected conformal
symplectic manifold on which the compact connected Lie group $K$ acts
smoothly.  We introduce Hamiltonian Lie group actions and actions of
Lee type following~\cite{HR01} and~\cite{BGP18} (with some minor
changes in terminology) and prove a ``local normal form'' theorem for
the moment map, which is one of the ingredients in the proof of the
convexity theorem.

\begin{definition}
The $K$-action on $M$ is \emph{weakly Hamiltonian} if there exists a
map $\Phi\colon M\to\lie{k^*}$, called a \emph{moment map}, such that
$\iota(\xi_M)\omega=d_\theta\Phi^\xi$ for all $\xi\in\lie{k}$.  Here
$\Phi^\xi$ is defined by $\langle\Phi(x),\xi\rangle=\Phi^\xi(x)$.  The
$K$-action is \emph{Hamiltonian}, or $M$ is a \emph{Hamiltonian
  conformal symplectic $K$-manifold}, if there exists a moment map
$\Phi$ which is $K$-equivariant with respect to the given action on
$M$ and the coadjoint action on $\lie{k}^*$, i.e.\ $\Phi(g\cdot
x)=\Ad^*_{g^{-1}}(\Phi(x))$ for all $g\in K$.
\end{definition}

If $\Phi$ is a moment map for the $K$-action on $M$, then
$\xi_M=X_{\Phi^\xi}$, and hence $\xi_M\in\X_{(\omega,\theta)}(M)$, for
all $\xi\in\lie{k}$.  Define
$\Phi^\vee\colon\lie{k}\to\ca{C}^\infty(M)$ by
$\Phi^\vee(\xi)=\Phi^\xi$.  The map $\Phi^\vee$ is a lifting of the
infinitesimal action $\lie{k}\to\X(M)$ to a linear map
$\lie{k}\to\ca{C}^\infty(M)$ along the map~\eqref{equation;lie-hom},
\[
\begin{tikzcd}
&\ca{C}^\infty(M)\ar[d]\\
\lie{k}\ar[ur,pos=0.6,dotted,"\Phi^\vee"]\ar[r]&
\X_{(\omega,\theta)}(M).
\end{tikzcd}
\]
The action is Hamiltonian if and only if such a lifting exists.  It
follows from Proposition~\ref{proposition;strict} that if
$(M,\omega,\theta)$ is strict (which is the case of most interest to
us), then the moment map $\Phi$ is uniquely determined by the
$K$-action.  For Hamiltonian $K$-actions on \emph{symplectic}
manifolds the moment map is unique up to an additive constant, and it
is well-known that the constant can be chosen so that the moment map
is equivariant with respect to the coadjoint action on $\lie{k}^*$.
However, in the conformal symplectic setting the moment map is not
necessarily $K$-equivariant.  The following proposition, which is due
to~\cite{HR01} and~\cite{BGP18}, shows that the failure of the
$K$-equivariance lies in the fact that Hamiltonian vector fields do
not preserve the conformal symplectic form.  We review the argument,
because we will need a generalization of the statement to degenerate
$2$-forms in Appendix~\ref{section;conformal-presymplectic}.

\begin{proposition}%
[{\cite[Proposition~2]{HR01}, \cite[Proposition~4.6]{BGP18}}]
\label{proposition;equivariant-moment}
Suppose that $(M,\omega,\theta)$ is strict conformal symplectic and
that the $K$-action is weakly Hamiltonian with moment map $\Phi$.
Then the following conditions are equivalent:
\begin{enumerate}
\item\label{item;equiv}
The action is Hamiltonian.
\item\label{item;inv}
$\omega$ is $K$-invariant.
\item\label{item;bas}
The Lee form $\theta$ is $K$-basic.
\end{enumerate}
\end{proposition}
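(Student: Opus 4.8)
The plan is to read off all three equivalences from one structural fact. Since $\xi_M=X_{\Phi^\xi}$ is a twisted Hamiltonian vector field it lies in $\X_{(\omega,\theta)}(M)$, so $L_\theta(\xi_M)\omega=0$; expanding the twisted Lie derivative yields $L(\xi_M)\omega=-\theta(\xi_M)\,\omega$ for every $\xi\in\lie{k}$. As $K$ is connected, invariance of a tensor is equivalent to the vanishing of its Lie derivatives along the $\xi_M$, so I would prove $(\text{ii})\Leftrightarrow(\text{iii})$ directly and then close the cycle $(\text{iii})\Rightarrow(\text{i})\Rightarrow(\text{iii})$.

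First I would dispose of $(\text{ii})\Leftrightarrow(\text{iii})$. By connectedness, $\omega$ is $K$-invariant iff $L(\xi_M)\omega=0$ for all $\xi$; by the displayed identity and the non-degeneracy of $\omega$ this happens iff $\theta(\xi_M)=\iota(\xi_M)\theta=0$ for all $\xi$, i.e.\ iff $\theta$ is horizontal. It remains to see that for the closed form $\theta$ horizontality already forces invariance: $L(\xi_M)\theta=d\,\iota(\xi_M)\theta+\iota(\xi_M)\,d\theta=d\,\iota(\xi_M)\theta$, which vanishes once $\iota(\xi_M)\theta=0$. Hence horizontal $=$ basic, giving $(\text{ii})\Leftrightarrow(\text{iii})$.

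For $(\text{i})\Leftrightarrow(\text{iii})$ I would establish a conformal analogue of the statement that the comoment map is a Lie algebra homomorphism. Evaluating $\iota(\xi_M)\omega=d_\theta\Phi^\xi$ on $\eta_M$ and using $\{\Phi^\xi,\Phi^\eta\}=\omega(\xi_M,\eta_M)$ gives $\{\Phi^\xi,\Phi^\eta\}=L(\eta_M)\Phi^\xi+\Phi^\xi\,\theta(\eta_M)$, where the term $\Phi^\xi\theta(\eta_M)$ is the conformal correction absent in the symplectic case. Because $f\mapsto X_f$ is a Lie algebra homomorphism which, by Proposition~\ref{proposition;strict}, is injective, the relation $[\xi,\eta]_M=[\xi_M,\eta_M]$ forces $\{\Phi^\xi,\Phi^\eta\}=\Phi^{[\xi,\eta]}$ with no additive constant --- this is exactly where strictness enters, since in the symplectic case an undetermined constant (the equivariance cocycle) would survive. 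Combining the two expressions gives the identity $L(\eta_M)\Phi^\xi=\Phi^{[\xi,\eta]}-\Phi^\xi\,\theta(\eta_M)$, valid for any weakly Hamiltonian action. On the other hand, differentiating $\Phi(g\cdot x)=\Ad^*_{g^{-1}}\Phi(x)$ and using connectedness of $K$ shows that the action is Hamiltonian precisely when $L(\eta_M)\Phi^\xi=\Phi^{[\xi,\eta]}$ for all $\xi,\eta$. Subtracting, the action is Hamiltonian if and only if $\Phi^\xi\,\theta(\eta_M)\equiv0$ for all $\xi,\eta$; in particular $(\text{iii})\Rightarrow(\text{i})$ is immediate.

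The step I expect to be most delicate is the converse $(\text{i})\Rightarrow(\text{iii})$, where one must pass from $\Phi^\xi\theta(\eta_M)\equiv0$ to $\theta(\eta_M)\equiv0$ without dividing by $\Phi$, which may have zeros. I would argue by contradiction: were $\theta(\eta_M)(x_0)\neq0$ for some $\eta$ and some $x_0$, then $\theta(\eta_M)$ would be nonzero on an open neighbourhood $U$ of $x_0$, forcing $\Phi^\xi\equiv0$ on $U$ for every $\xi$; but then $\iota(\xi_M)\omega=d_\theta\Phi^\xi=0$ on $U$, so non-degeneracy of $\omega$ gives $\xi_M\equiv0$ on $U$ for all $\xi$, and in particular $\eta_M\equiv0$ on $U$, whence $\theta(\eta_M)\equiv0$ on $U$ --- a contradiction. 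Thus $\theta$ is horizontal, hence (being closed) basic, proving $(\text{i})\Rightarrow(\text{iii})$ and closing the cycle. The remaining work is purely a matter of fixing sign conventions so that the conformal correction $\Phi^\xi\theta(\eta_M)$ appears with the right coefficient in the comoment identity; conceptually the whole proposition rests on the single observation that $L(\xi_M)\omega=-\theta(\xi_M)\,\omega$, so that the obstruction to invariance of $\omega$, to basicness of $\theta$, and to equivariance of $\Phi$ are one and the same, namely the quantity $\theta(\xi_M)$.
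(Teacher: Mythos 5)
Your proof is correct, but it reaches the equivalence of (i) with (ii)/(iii) by a genuinely different route than the paper. Both arguments start from the same identity $L(\xi_M)\omega=-\theta(\xi_M)\,\omega$, and your treatment of (ii)$\iff$(iii) (non-degeneracy of $\omega$, plus the observation that a closed horizontal form is automatically invariant) coincides with the paper's. The difference is in how (i) is tied in: the paper integrates the infinitesimal identity to the group-level conformal invariance $g^*\omega=c_g\,\omega$ and then invokes \cite[Proposition~3]{HR01} --- a Lie algebra cohomology argument resting on $H^1(\lie{k},H_\theta^0(M))=H^2(\lie{k},H_\theta^0(M))=0$, which follows from $H_\theta^0(M)=0$ --- to conclude that $\Phi$ is unique and obeys the conformal equivariance law $\Phi(g\cdot x)=c_g(x)\Ad^*_{g^{-1}}(\Phi(x))$, from which the equivalences are read off. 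You instead stay infinitesimal throughout: strictness (injectivity of $f\mapsto X_f$, via Proposition~\ref{proposition;strict}) upgrades the usual symplectic cocycle relation to the exact identity $\{\Phi^\xi,\Phi^\eta\}=\Phi^{[\xi,\eta]}$, which combined with $\{\Phi^\xi,\Phi^\eta\}=L(\eta_M)\Phi^\xi+\Phi^\xi\,\theta(\eta_M)$ shows that equivariance of $\Phi$ is equivalent to $\Phi^\xi\,\theta(\eta_M)\equiv0$; your open-set/non-degeneracy argument then legitimately removes the factor $\Phi^\xi$ despite its possible zeros. What each approach buys: the paper's proof is shorter given the cited machinery and yields the conformal equivariance formula as a useful by-product, while yours is self-contained, pinpoints exactly where strictness kills the symplectic equivariance cocycle, and explicitly handles the zero locus of $\Phi$ --- a subtlety that the paper's ``(i)$\iff$(ii) is now immediate'' quietly delegates to \cite{HR01}. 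Two points you should make explicit: first, the step ``the action is Hamiltonian precisely when $L(\eta_M)\Phi^\xi=\Phi^{[\xi,\eta]}$'' uses uniqueness of the moment map (``Hamiltonian'' means \emph{some} equivariant moment map exists; by Proposition~\ref{proposition;strict} that map must be $\Phi$ itself); second, as you anticipate, the conventions $X_{\{f,g\}}=[X_f,X_g]$ and $[\xi,\eta]_M=[\xi_M,\eta_M]$ must be chosen as a matched pair (both homomorphisms, or both anti-homomorphisms, the two signs cancelling), and with either matched choice the correction term carries exactly the coefficient you state.
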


\begin{proof}
The vector field $\xi_M$ is Hamiltonian, so $L_\theta(\xi_M)\omega=0$
for all $\xi\in\lie{k}$, i.e.\ $L(\xi_M)\omega=\gamma_\xi\omega$,
where $\gamma_\xi=-\theta(\xi_M)\in\ca{C}^\infty(M)$.  It follows that
$\omega$ is conformally $K$-invariant in the sense that
$g^*\omega=c_g\omega$ for all $g\in K$.  Here
$c_g\in\ca{C}^\infty(M,\R_{>0})$ is defined by
\[
c_g(x)=\exp\int_0^1\gamma_\xi\bigl(\exp_K(t\xi_M)\cdot x\bigr)\,dt
\]
for $x\in M$ and for any $\xi\in\lie{k}$ with $g=\exp_K(\xi)$.  Since
$H_\theta^0(M)=0$ by Proposition~\ref{proposition;strict}, we have
\[H^1(\lie{k},H_\theta^0(M))=H^2(\lie{k},H_\theta^0(M))=0,\]
so it follows from \cite[Proposition~3]{HR01} that the moment map
$\Phi$ is unique and has the conformal equivariance property
$\Phi(g\cdot x)=c_g(x)\Ad^*_{g^{-1}}(\Phi(x))$ for all $g\in K$.  The
equivalence \eqref{item;equiv}$\iff$\eqref{item;inv} is now immediate,
because $\omega$ is $K$-invariant if and only if $c_g$ is constant
equal to $1$ for all $g$.  Also $\omega$ is invariant if and only if
$L(\xi_M)\omega=0$, i.e.\ $\gamma_\xi=0$ for all $\xi$,
so~\eqref{item;inv} is equivalent to $\theta$ being horizontal.  Every
closed horizontal form is invariant, so $\theta$ is horizontal if and
only if it is basic.
\end{proof}

\begin{remark}
For Hamiltonian actions on symplectic manifolds, it is well-known that
each component of the moment map is a Morse-Bott function.  However,
this fails in general for Hamiltonian actions on conformal symplectic
manifolds.
\end{remark}

Let $m\in M$.  Recall that the orbit $K\cdot m$ has an
\emph{equivariant tubular neighborhood}, i.e.\ an open neighborhood of
the form $V=K\cdot S$, where $S$ is a slice at $m$ for the $K$-action.
A useful consequence of
Proposition~\ref{proposition;equivariant-moment} is that such a
neighborhood $V$ is conformally equivalent to a \emph{symplectic}
Hamiltonian $K$-manifold.

\begin{proposition}\label{proposition;local-model}
Suppose that $(M,\omega,\theta)$ is strict conformal symplectic and
that the $K$-action is Hamiltonian with moment map $\Phi$.  Let $m\in
M$ and let $V$ be an equivariant tubular neighborhood of $K\cdot m$.
\begin{enumerate}
\item\label{item;function}
There exists a $K$-invariant smooth function $f$ on $V$ such that
$\theta|_V=df$.
\item\label{item;symplectic}
The form $\Omega=e^f\cdot\omega|_V$ is symplectic and the $K$-action
on $(V,\Omega)$ is Hamiltonian with equivariant moment map
$\Psi=e^f\cdot\Phi|_V$.
\end{enumerate}
\end{proposition}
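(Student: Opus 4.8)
The plan is to establish (i) by a cohomological argument and then obtain (ii) by specializing the gauge action to $a=e^f$ and computing directly. For part (i), recall that since the action is Hamiltonian, Proposition~\ref{proposition;equivariant-moment} guarantees that $\theta$ is $K$-basic, hence $K$-invariant and horizontal. The tubular neighborhood $V=K\cdot S$ is $K$-equivariantly diffeomorphic to the associated bundle $K\times_{K_m}S$, which $K$-equivariantly deformation retracts onto the orbit $K\cdot m$ by collapsing the slice $S$ to $m$. Every tangent vector to the orbit has the form $\xi_M$, so horizontality gives $\iota(\xi_M)\theta=0$ and the pullback of $\theta$ to $K\cdot m$ vanishes. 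Thus the closed form $\theta|_V$ is exact, and I can write $\theta|_V=dh$ for some $h\in\ca{C}^\infty(V)$. The primitive can be taken $K$-invariant: averaging $h$ over $K$ leaves $dh=\theta|_V$ unchanged, because $\theta$ is $K$-invariant, while producing an invariant function. Equivalently, $\iota(\xi_M)\,dh=\iota(\xi_M)\theta=0$ forces $L(\xi_M)h=0$ for all $\xi\in\lie{k}$, so any primitive is automatically $K$-invariant since $K$ is connected. Call the resulting invariant primitive $f$.

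For the symplecticity claim in part (ii), setting $a=e^f$ in the gauge formula $a\cdot(\omega,\theta)=(a\omega,\theta-d\log a)$ yields $e^f\cdot(\omega|_V,\theta|_V)=(e^f\omega|_V,\theta|_V-df)=(\Omega,0)$, so $(\Omega,0)$ is conformal symplectic with vanishing Lee form, i.e.\ symplectic. Concretely, $d\Omega=d(e^f\omega)=e^f(df-\theta)\wedge\omega=0$ on $V$ by the defining identity $d\omega+\theta\wedge\omega=0$, and $\Omega$ is nondegenerate because $e^f>0$.

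It remains to verify that $\Psi=e^f\Phi|_V$ is an equivariant moment map for $(V,\Omega)$ in the ordinary symplectic sense. Restricting $\iota(\xi_M)\omega=d_\theta\Phi^\xi$ to $V$, multiplying by $e^f$, and expanding the twisted differential on a function gives $\iota(\xi_M)\Omega=e^f\,d_\theta\Phi^\xi=e^f\bigl(d\Phi^\xi+\Phi^\xi\theta\bigr)$. On the other hand $d\Psi^\xi=d(e^f\Phi^\xi)=e^f\bigl(d\Phi^\xi+\Phi^\xi\,df\bigr)$, and since $\theta|_V=df$ the two expressions coincide, so $\iota(\xi_M)\Omega=d\Psi^\xi$, which is the untwisted moment map condition. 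Equivariance of $\Psi$ follows from that of $\Phi$ (which holds because the action is Hamiltonian, so the cocycle $c_g$ appearing in the proof of Proposition~\ref{proposition;equivariant-moment} equals $1$) together with the $K$-invariance of $f$ and the linearity of $\Ad^*_{g^{-1}}$: one has $\Psi(g\cdot x)=e^{f(x)}\Ad^*_{g^{-1}}\Phi(x)=\Ad^*_{g^{-1}}\Psi(x)$.

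I expect the only real obstacle to be part (i): one must produce a primitive of $\theta|_V$ that is simultaneously $K$-invariant, and this genuinely uses both the retraction of the tubular neighborhood onto its orbit and the horizontality of $\theta$, the latter being available precisely because the action is Hamiltonian rather than merely weakly Hamiltonian. Once $f$ is in hand, part (ii) is a routine consequence of the gauge-action formalism.
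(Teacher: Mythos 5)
Your proposal is correct, and your part (ii) is essentially the paper's computation verbatim: the gauge transformation $e^f\cdot(\omega|_V,\theta|_V)=(\Omega,0)$ gives symplecticity, the identity $d\Psi^\xi=e^f(d\Phi^\xi+\Phi^\xi\theta)|_V=\iota(\xi_M)\Omega$ gives the moment map condition, and equivariance follows from invariance of $f$ plus equivariance of $\Phi$. The genuine divergence is in part (i). The paper settles it in one line by citing Koszul's theorem that the basic de Rham complex $\Omega^*_{K,\bas}(V)$ of an equivariant tubular neighborhood is acyclic, so the closed basic $1$-form $\theta|_V$ equals $df$ with $f$ basic, invariance coming for free. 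You instead argue by hand: $V\cong K\times_{K_m}S$ retracts $K$-equivariantly onto the orbit, the pullback of $\theta$ to $K\cdot m$ vanishes by horizontality, so $[\theta|_V]=0$ in ordinary de Rham cohomology and a primitive $h$ exists; you then make it invariant either by averaging over $K$ (legitimate, since $K$-invariance of $\theta$ means averaging commutes with the condition $dh=\theta|_V$) or, more sharply, by noting $L(\xi_M)h=\iota(\xi_M)dh=\iota(\xi_M)\theta=0$, so every primitive is automatically invariant because $K$ is connected. Your route is more elementary and self-contained, and it exposes precisely where the Hamiltonian (as opposed to weakly Hamiltonian) hypothesis is used: horizontality of $\theta$ is what kills the class on the orbit. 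What the appeal to Koszul buys the paper is brevity and a statement valid in all degrees of the basic complex, which is the standard tool in equivariant de Rham theory; what your argument buys is transparency, needing nothing beyond the slice theorem and the homotopy invariance of $H^1$.
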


\begin{proof}
\eqref{item;function}~By
Proposition~\ref{proposition;equivariant-moment} the form
$\theta|_V\in\Omega^1(V)$ is basic.  A theorem of Koszul
(see~\cite[p.~141]{K53} or~\cite[Appendix~B3]{GGK02}) says that the
basic de Rham complex $\Omega^*_{K,\bas}(V)$ is acyclic.  Hence
$\theta|_V=df$ for some $K$-invariant $f\in\ca{C}^\infty(V)$.

\eqref{item;symplectic}~The $2$-form $\omega|_V$ is global conformal
symplectic with Lee form $df$.  It follows that
$\Omega=e^f\cdot\omega|_V$ is a symplectic form.  Since $f$ is
$K$-invariant, the form $\Omega$ is $K$-invariant and the map $\Psi$
is $K$-equivariant.  For all $\xi\in\lie{k}$ we have
\[
d\Psi^\xi=d\bigl(e^f\Phi^\xi|_V\bigr)=
e^f\bigl(d\Phi^\xi+\theta\cdot\Phi^\xi\bigr)\big|_V=
e^fd_\theta\Phi^\xi|_V=e^f\iota(\xi_V)\omega|_V=\iota(\xi_V)\Omega,
\]
which shows that $\Psi$ is a moment map for the $K$-action on
$(V,\Omega)$.
\end{proof}

\begin{definition}
The $K$-action on $M$ is of \emph{Lee type} if the Lee vector field
$A$ is generated by the infinitesimal action of a Lie algebra element,
i.e.\ $A=\zeta_M$ for some $\zeta\in\lie{k}$.  Such an element $\zeta$
is a \emph{Lee element} for the $K$-action.
\end{definition}

The Lee type condition is equivalent to the moment map image being
contained in an affine hyperplane.

\begin{lemma}\label{lemma;affine-plane}
Let $(M,\omega,\theta)$ be strict conformal symplectic.  The
$K$-action on $M$ is of Lee type, with Lee element $\zeta\in\lie{k}$,
if and only if the image of $\Phi$ lies in the affine hyperplane
\[\H_\zeta=\{\,v\in\lie{k}^*\mid\inner{v,\zeta}=1\,\}.\]
\end{lemma}

\begin{proof}
By definition $d_\theta\Phi^\zeta=\iota(\zeta_M)\omega$.  If
$\Phi(M)\subseteq\H_\zeta$, then $\Phi^\zeta=1$, so
$\theta=d_\theta1=d_\theta\Phi^\zeta=\iota(\zeta_M)\omega$.  This
shows that $A=\zeta_M$, i.e.\ the action is of Lee type with Lee
element $\zeta$.  Conversely, if $\zeta$ is a Lee element, then
$d_\theta\Phi^\zeta=\theta=d_\theta(1)$.  By
Proposition~\ref{proposition;strict}, since $M$ is connected and
$(\omega,\theta)$ is strict, the map
$d_\theta\colon\ca{C}^\infty(M)\to\Omega^1(M)$ is injective and
therefore we get $\Phi^\zeta=1$, i.e.\ $\Phi(M)\subset\H_\zeta$.
\end{proof}

The next result is a version of~\cite[Corollary~4.14]{BGP18}.

\begin{lemma}\label{lemma;lee}
Let $(M,\omega,\theta)$ be strict conformal symplectic.  Suppose that
the $K$-action is weakly Hamiltonian and of Lee type.  Then the
$K$-action is Hamiltonian if and only if there exists a central Lee
element for the $K$-action.  In particular, the action is Hamiltonian
if $K$ is a torus, and cannot be Hamiltonian if $K$ is semisimple.
\end{lemma}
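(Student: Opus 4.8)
The plan is to identify the Lee elements as the affine subspace $L=\{\,\zeta\in\lie{k}\mid\Phi^\zeta\equiv1\,\}$ of $\lie{k}$ — which is exactly the Lee type condition by Lemma~\ref{lemma;affine-plane} — and then to decide the Hamiltonian property through Proposition~\ref{proposition;equivariant-moment}: the action is Hamiltonian if and only if $\omega$ is $K$-invariant, i.e.\ the conformal factor $\gamma_\xi=-\theta(\xi_M)$, defined by $L(\xi_M)\omega=\gamma_\xi\omega$, vanishes for every $\xi\in\lie{k}$. Note that $L$ is genuinely an affine subspace, since $\zeta\mapsto\Phi^\zeta$ is linear and $L$ is the preimage of the constant function $1$.

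For the forward implication I would exploit the equivariance $\Phi(g\cdot x)=\Ad^*_{g^{-1}}\Phi(x)$. A short computation gives $\Phi^{\Ad_g\zeta}(x)=\inner{\Phi(x),\Ad_g\zeta}=\Phi^\zeta(g^{-1}\cdot x)$, so if $\zeta\in L$ then $\Phi^{\Ad_g\zeta}\equiv1$ and $\Ad_g\zeta\in L$ as well; that is, $L$ is $\Ad(K)$-invariant. Averaging a Lee element over the group, $\bar\zeta=\int_K\Ad_g\zeta\,dg$ with respect to normalized Haar measure, yields an element fixed by $\Ad(K)$ — hence central, as $K$ is connected — which still satisfies $\Phi^{\bar\zeta}=\int_K\Phi^{\Ad_g\zeta}\,dg\equiv1$. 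Thus $\bar\zeta$ is a central Lee element.

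For the converse, suppose $\zeta$ is a central Lee element, so that $A=\zeta_M$ and $[\zeta,\xi]=0$ for all $\xi\in\lie{k}$, whence $[\zeta_M,\xi_M]=0$. Since $f\mapsto X_f$ is a Lie algebra homomorphism, $X_{\{\Phi^\xi,\Phi^\zeta\}}=[\xi_M,\zeta_M]=0$, and the injectivity in Proposition~\ref{proposition;strict} forces $\{\Phi^\xi,\Phi^\zeta\}=0$. On the other hand, from $\iota(A)\omega=d_\theta1=\theta$ together with $A=\zeta_M$ we obtain $\theta(\xi_M)=\omega(\zeta_M,\xi_M)$, so that $\gamma_\xi=-\theta(\xi_M)=\omega(\xi_M,\zeta_M)=\{\Phi^\xi,\Phi^\zeta\}=0$. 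Hence $\omega$ is $K$-invariant and the action is Hamiltonian by Proposition~\ref{proposition;equivariant-moment}.

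The two special cases then follow immediately. If $K$ is a torus, $\lie{k}$ is abelian, so any Lee element (one exists by the Lee type hypothesis) is automatically central and the action is Hamiltonian. If $K$ is semisimple, the center of $\lie{k}$ is trivial, whereas a Lee element $\zeta$ satisfies $\Phi^\zeta\equiv1$ and so cannot vanish; thus no central Lee element exists and the action cannot be Hamiltonian. I expect the substantive point to be the reverse implication, where centrality of $\zeta$ must be turned into the vanishing of every $\gamma_\xi$: the crux is recognizing $\gamma_\xi$ as the bracket $\{\Phi^\xi,\Phi^\zeta\}$ and then killing it via the homomorphism property and the injectivity of $d_\theta$ on functions. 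The forward implication is comparatively soft, resting only on the averaging trick.
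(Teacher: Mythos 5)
Your proof is correct: both implications hold, and the two special cases follow as you say. It does not, however, follow the paper's proof step for step, so a comparison is worthwhile. In the forward direction the paper never passes through the moment map: it notes that a Hamiltonian action leaves $\omega$ and $\theta$ invariant (Proposition~\ref{proposition;equivariant-moment}), hence leaves the Lee vector field $A$ invariant, picks any $\xi$ with $\xi_M=A$, and averages $\xi$ itself using $(\Ad_g\xi)_M=g_*\xi_M=A$; you instead average a Lee element after showing that the set $\{\,\zeta\mid\Phi^\zeta\equiv1\,\}$ is $\Ad(K)$-stable via equivariance of $\Phi$ and Lemma~\ref{lemma;affine-plane}. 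These are the same averaging trick with different bookkeeping. The genuine divergence is in the converse. The paper runs a twisted Cartan calculus computation, $d_\theta\iota(\xi_M)\theta=L_\theta(\xi_M)\iota(\zeta_M)\omega=\iota([\xi,\zeta]_M)\omega=0$, and invokes injectivity of $d_\theta$ (Proposition~\ref{proposition;strict}) to get $\iota(\xi_M)\theta=0$, i.e.\ $\theta$ basic; you instead recognize the conformal factor $\gamma_\xi=-\theta(\xi_M)$ as the bracket $\{\Phi^\xi,\Phi^\zeta\}=\omega(\xi_M,\zeta_M)$ and kill it using the homomorphism property $X_{\{f,g\}}=[X_f,X_g]$ together with injectivity of $f\mapsto X_f$. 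Both hinge on Proposition~\ref{proposition;strict}, but your version isolates a nice conceptual point the paper leaves implicit: the obstruction to invariance of $\omega$ is literally a bracket of moment map components, and centrality of the Lee element kills it because the Hamiltonian correspondence is an injective Lie algebra homomorphism. What the paper's version buys in exchange is robustness: its computation lives entirely at the level of forms and twisted derivatives and does not use nondegeneracy of $\omega$, which is exactly what allows the authors to reuse the Section~\ref{section;hamilton} results verbatim for conformal presymplectic manifolds in Appendix~\ref{section;conformal-presymplectic}, whereas your identification $X_{\Phi^\xi}=\xi_M$ (and the injectivity of $f\mapsto X_f$ into genuine vector fields) leans on $\omega$ being nondegenerate and would need reworking in that degenerate setting.
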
  

\begin{proof}
Suppose the action is Hamiltonian.  Then $\omega$ and $\theta$ are
$K$-invariant by Proposition~\ref{proposition;equivariant-moment}, so
the Lee vector field $A$, which is characterized by
$\iota(A)\omega=\theta$, is $K$-invariant.  Choose any $\xi\in\lie{k}$
such that $A=\xi_M$.  Invariance of $A$ yields
$(\Ad_g\xi)_M=g_*(\xi_M)=g_*A=A$ for all $g\in K$.  Let
$\zeta=\int_K\Ad_g\xi\,dg$, where $dg$ denotes normalized Haar measure
on $K$.  Then $\zeta$ is central and
\[\zeta_M=\int_K(\Ad_g\xi)_M\,dg=\int_KA\,dg=A,\]
so $\zeta$ is a Lee element.  Conversely, suppose there exists a
central Lee element $\zeta$.  Then for all $\xi\in\lie{g}$ we have
\begin{align*}
d_\theta\iota(\xi_M)\theta&=L_\theta(\xi_M)\theta\\
&=L_\theta(\xi_M)\iota(\zeta_M)\omega\\
&=[L_\theta(\xi_M),\iota(\zeta_M)]\omega-
\iota(\zeta_M)L_\theta(\xi_M)\omega\\
&=\iota([\xi,\zeta]_M])\omega\\
&=0.
\end{align*}
Here we used the commutation relations~\eqref{equation;cartan}, the
fact that $L_\theta(\xi_M)\omega=0$, which follows from the assumption
that the vector field $\xi_M$ is Hamiltonian, and the fact that
$\zeta$ is central.  By Proposition~\ref{proposition;strict} we
conclude that $\iota(\xi_M)\theta=0$, i.e.\ $\theta$ is basic.  Hence
the action is Hamiltonian by
Proposition~\ref{proposition;equivariant-moment}.  The last assertion
now follows from the observation that a Lee element for the action on
$M$ must be nonzero, because $M$ is strict conformal symplectic.
\end{proof}

%%%%%%%%%%%%%%%%%%%%%%%%%%%%%%%%%%%%%%%%%%%%%%%%%%%%%%%%%%%%%%%%%%%%%%%%
\section{Convexity properties of the moment map}\label{section;convex}
%%%%%%%%%%%%%%%%%%%%%%%%%%%%%%%%%%%%%%%%%%%%%%%%%%%%%%%%%%%%%%%%%%%%%%%%

The purpose of this section is to prove the convexity theorem,
Theorem~\ref{theorem;main}.  The basic idea is to use
Proposition~\ref{proposition;local-model} to obtain local convexity,
and then to apply a local-global principle.

%%%%%%%%%%%%%%%%%%%%%%%%%%%%%%%%%%%%%%%%%%%%%%%%%%%%%%%%%%%%%%%%%%%%%%%%
\subsection*{Notation and conventions}
%%%%%%%%%%%%%%%%%%%%%%%%%%%%%%%%%%%%%%%%%%%%%%%%%%%%%%%%%%%%%%%%%%%%%%%%

Except for Theorem~\ref{theorem;global}, in this section
$(M,\omega,\theta)$ denotes a connected strict conformal symplectic
manifold equipped with a Hamiltonian action of the compact connected
Lie group $K$.  The (unique) equivariant moment map for the action is
denoted by $\Phi\colon M\to\lie{k}^*$.  We assume the action is of Lee
type and choose a central Lee element $\zeta$, i.e.\ an element
$\zeta\in\lie{z}(\lie{k})$ such that $\zeta_M$ is equal to the Lee
vector field $A$.  The existence of such an element is guaranteed by
Lemma~\ref{lemma;lee}.  We let $\H_\zeta$ be the affine hyperplane
\[\H_\zeta=\{\,v\in\lie{k}^*\mid\inner{v,\zeta}=1\,\}.\]
We fix a maximal torus $T$ of $K$ with Lie algebra $\lie{t}$ and a
closed chamber $C$ in the dual space $\lie{t}^*$.  Since the Lee
element $\zeta$ is central, it is contained in the Cartan subalgebra
$\lie{t}$.  Moreover, $\lie{t}$ has a natural complement in $\lie{k}$
(the sum of the root spaces), and therefore $\lie{t}^*$ can be
naturally regarded as a subspace of $\lie{k}^*$.  The inclusion
$C\hookrightarrow\lie{k}^*$ induces a homeomorphism
$C\stackrel{\simeq}\longto\lie{k}^*/\mathrm{Ad}^*(K)$, which we use to
identify $\lie{k}^*/\mathrm{Ad}^*(K)$ with $C$.  We let
$q\colon\lie{k}^*\to C$ be the quotient map and define
$\phi=q\circ\Phi\colon M\to C$ to be the composition of $\Phi$ with
$q$.  The \emph{moment body} $\Delta(M)$ of $M$ is defined by
\[\Delta(M)=\Phi(M)\cap C=\phi(M).\]
%

%%%%%%%%%%%%%%%%%%%%%%%%%%%%%%%%%%%%%%%%%%%%%%%%%%%%%%%%%%%%%%%%%%%%%%%%
\subsection*{Local convexity}
%%%%%%%%%%%%%%%%%%%%%%%%%%%%%%%%%%%%%%%%%%%%%%%%%%%%%%%%%%%%%%%%%%%%%%%%

Let $\zeta^\circ\subset\lie{k}^*$ be the annihilator of the line
$\R\cdot\zeta$ in $\lie{k}$ spanned by the Lee element.  Then
$\zeta^\circ$ is a hyperplane in $\lie{k}^*$, and for any point $m\in
M$ we have $\H_\zeta=\Phi(m)+\zeta^\circ$.  Let $\H_\zeta^+$ be the
open halfspace
\[
\H^+_\zeta= \{\,v\in\lie{k}^*\,|\,\langle v,\zeta\rangle>0\,\}
\]
and define the \emph{rescaling map}
$\varrho\colon\H^+_\zeta\to\H_\zeta$ by
\begin{equation}\label{equation;rescale}
\varrho(v)=\frac{v}{\inner{v,\zeta}};
\end{equation}
in other words $\varrho(v)$ is the unique intersection point of the
line through $v\in\H_\zeta^+$ with the affine hyperplane $\H_\zeta$.
The rescaling map extends to a projective linear map
\[
\hat{\varrho}\colon\bb{P}(\lie{k}^*\oplus\R)\longto
\bb{P}(\lie{k}^*\oplus\R)
\]
given by $\hat{\varrho}([v,t])=[v,\inner{v,\zeta}]$, where $[v,t]$
denotes the line through a nonzero point $(v,t)\in\lie{k}^*\oplus\R$.
The map $\hat\varrho$ is a projection onto the hyperplane
$\{\,[v,t]\mid\inner{v,\zeta}=t\,\}$, and it is $K$-equivariant, where
we let $K$ act by the coadjoint action on $\lie{k}^*$ and trivially on
$\R$.  These facts imply that $\varrho$ has the following properties.

\begin{lemma}\phantomsection\label{lemma;rescale}
\begin{enumerate}
\item\label{item;open-connected}
The map $\varrho$ is open and its fibres are connected.
\item\label{item;convex-ray}
The map $\varrho$ maps convex sets to convex sets and convex
polyhedral sets to convex polyhedral sets.  Let $\ell=\{\,tv\mid
t>0\,\}$ be the ray (open halfline) through any $v\in\H_\zeta^+$.
Then $\varrho(\ell\cap\H_\zeta^+)$ is a ray in $\H_\zeta$.
\item\label{item;equivariant-chamber}
The affine hyperplane $\H_\zeta$ and the halfspace $\H_\zeta^+$ are
$\Ad^*$-invariant.  The map $\varrho$ is $\Ad^*$-equivariant and maps
$\H_\zeta^+\cap C$ onto $\H_\zeta\cap C$.  The diagram
\[
\begin{tikzcd}[row sep=large]
\H_\zeta^+\ar[r,"\varrho"]\ar[d,"q"']&\H_\zeta\ar[d,"q"]
\\
\H_\zeta^+\cap C\ar[r,"\varrho"]&\H_\zeta\cap C
\end{tikzcd}
\]
commutes.
\end{enumerate}
\end{lemma}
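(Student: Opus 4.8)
The plan is to deduce all three statements from the single observation that, in suitable coordinates, $\varrho$ is a linear projection. First I would introduce
\[
\Theta\colon\H_\zeta^+\longto\H_\zeta\times\R_{>0},\qquad
\Theta(v)=\bigl(\varrho(v),\inner{v,\zeta}\bigr).
\]
A direct check, using $\inner{w,\zeta}=1$ for $w\in\H_\zeta$, shows that $\Theta$ is a diffeomorphism with inverse $(w,s)\mapsto sw$, and that $\pr_1\circ\Theta=\varrho$. Hence $\varrho$ is the composite of a diffeomorphism with the projection $\pr_1$, which is open and has connected fibres $\{w\}\times\R_{>0}$; this gives~\eqref{item;open-connected}. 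Moreover the fibre of $\varrho$ through $v$ is $\Theta^{-1}\bigl(\{\varrho(v)\}\times\R_{>0}\bigr)=\{\,tv\mid t>0\,\}$, the open ray through $v$, which is the ray content of~\eqref{item;convex-ray}: each such ray is one entire fibre, and $\varrho$ contracts it to the single point $\varrho(v)\in\H_\zeta$.

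For the convexity part of~\eqref{item;convex-ray} I would argue on segments. Given $v_0,v_1\in\H_\zeta^+$ with $a_i=\inner{v_i,\zeta}>0$, the segment $v_t=(1-t)v_0+tv_1$ stays in $\H_\zeta^+$ since $\inner{v_t,\zeta}=(1-t)a_0+ta_1>0$, and writing $v_i=a_i\varrho(v_i)$ rewrites $\varrho(v_t)$ as $(1-s)\varrho(v_0)+s\varrho(v_1)$ with $s=ta_1/\bigl((1-t)a_0+ta_1\bigr)$, a homeomorphism of $[0,1]$ onto itself. Thus $\varrho$ carries segments onto segments, hence convex sets to convex sets. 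Equivalently, $\varrho(S)=\mathrm{cone}(S)\cap\H_\zeta$, where $\mathrm{cone}(S)=\{\,sv\mid v\in S,\ s>0\,\}$, and the cone over a convex set is convex.

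The polyhedral statement is the main obstacle, and I would treat it through the projective picture already set up for $\hat\varrho$. The lift of $\hat\varrho$ is the \emph{linear} map $L\colon\lie{k}^*\oplus\R\to\lie{k}^*\oplus\R$, $L(v,t)=(v,\inner{v,\zeta})$. Homogenizing a convex polyhedral $S\subseteq\H_\zeta^+$ to a polyhedral cone $\widehat S\subseteq\lie{k}^*\oplus\R$ and applying $L$, the image $L(\widehat S)$ is again a polyhedral cone (a linear image of a polyhedral cone is polyhedral, by Fourier--Motzkin elimination), and its slice at $t=1$ is a convex polyhedral subset of $\H_\zeta$. One checks this slice equals $\overline{\varrho(S)}$: it contains $\varrho(S)$, and any point of the slice not in $\varrho(S)$ is a recession direction $w$ of $S$ lying in $\H_\zeta$, which belongs to $\overline{\varrho(S)}$ because $\varrho(p+tw)\to w$ as $t\to\infty$ for any $p\in S$. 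Hence $\overline{\varrho(S)}$ is always convex polyhedral, and $\varrho(S)$ itself is convex polyhedral exactly when it is closed. The delicate point is therefore closedness, which can fail when the recession cone of $S$ contains nonzero vectors of $\zeta^\circ$; it does hold in the cases we need, namely when $S$ is a cone with apex at the origin (then simply $\varrho(S)=S\cap\H_\zeta$) and when $S$ is a compact polytope (then $\varrho(S)$ is the convex hull of the images of its vertices).

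Finally, for~\eqref{item;equivariant-chamber} the key input is that $\zeta$ is central, so $\inner{\Ad^*_gv,\zeta}=\inner{v,\Ad_{g^{-1}}\zeta}=\inner{v,\zeta}$ for all $g\in K$. Thus $\Ad^*$ preserves the function $\inner{\cdot,\zeta}$, hence fixes $\H_\zeta$ and $\H_\zeta^+$ setwise, and $\varrho(\Ad^*_gv)=\Ad^*_gv/\inner{v,\zeta}=\Ad^*_g\varrho(v)$ shows that $\varrho$ is $\Ad^*$-equivariant. Since the Weyl chamber $C$ is a cone with apex at the origin, the cone computation above gives $\varrho(\H_\zeta^+\cap C)=C\cap\H_\zeta=\H_\zeta\cap C$. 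Equivariance means $\varrho$ sends $\Ad^*$-orbits to $\Ad^*$-orbits, so it descends along the quotient $q\colon\lie{k}^*\to C$; the induced map is the bottom arrow, and the identity $q\circ\varrho=\varrho\circ q$ is precisely the commutativity of the square.
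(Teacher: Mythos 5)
Your proofs of (i), (iii), and of the convex-image statement in (ii) are correct, and they are in substance the paper's own argument: the paper disposes of the whole lemma with the single observation that $\varrho$ is the restriction of the $K$-equivariant projective-linear projection $\hat\varrho$, and your splitting $\Theta(v)=\bigl(\varrho(v),\inner{v,\zeta}\bigr)$, your segment computation, and your use of the centrality of $\zeta$ are exactly the concrete content of that observation. One reinterpretation you made silently: as written in the lemma, the ray $\ell=\{\,tv\mid t>0\,\}$ emanates from the origin, so $\varrho(\ell\cap\H_\zeta^+)=\{\varrho(v)\}$ is a single point, not a ray; your reading of this clause --- such rays are precisely the fibres of $\varrho$, collapsed to points --- is the only sensible one, and it is what the openness and fibre-connectedness arguments actually use.

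The genuine divergence is the polyhedral claim, and there your suspicion is justified: the clause is false as literally stated, not merely delicate. Take $\lie{k}^*=\R^2$ with $\inner{(x,y),\zeta}=y$, and $S=\{\,(x,y)\mid x\ge1,\ y\ge1\,\}\subseteq\H_\zeta^+$, a convex polyhedron. Then $\varrho(S)=\{\,(u,1)\mid u>0\,\}$, since $x/y$ attains every positive value on $S$ but never $0$; an open ray is not closed, hence not an intersection of closed halfspaces, hence not a convex polyhedral set in the paper's sense. So your closure analysis ($\overline{\varrho(S)}$ is always polyhedral, and $\varrho(S)$ is polyhedral precisely when it is closed) is the correct statement, and the paper, which gives no argument for this clause at all, glosses over the issue. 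Two cautions, however. First, your claim that closedness ``holds in the cases we need'' misidentifies what the paper needs: in the proof of Theorem~\ref{theorem;local-convexity} the lemma is applied to $S=\Delta_m\cap\H_\zeta^+$ where $\Delta_m$ is a polyhedral cone with apex $\psi(m)\in\H_\zeta^+$ --- neither a cone with apex at the origin nor a compact polytope --- and the counterexample above is exactly of this form, $\Delta_m=(1,1)+\{\,(a,b)\mid a\ge0,\ b\ge0\,\}$. So the defect you found is not repaired by your two special cases; it propagates into the paper's local convexity argument, where closedness of $C_m$ must instead be obtained from additional structure of local moment cones or by replacing $C_m$ with the intersection of supporting halfspaces as in the remark following the proof of Theorem~\ref{theorem;main}. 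Second, your parenthetical diagnosis is imprecise: the obstruction is not recession directions of $S$ lying in $\zeta^\circ$, but recession directions $w$ with $\inner{w,\zeta}>0$ whose normalizations $\varrho(w)$ fail to lie in $\varrho(S)$ (in the example, $w=(0,1)$); and note also that Fourier--Motzkin elimination covers polyhedra, whereas the paper's polyhedral sets are locally finite intersections, a mismatch that is harmless in the intended applications but worth flagging.
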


We deduce from this the following local convexity theorem.

\begin{theorem}\label{theorem;local-convexity}
Assume that $(M,\omega,\theta)$ is strict conformal symplectic and
that the $K$-action on $M$ is Hamiltonian and of Lee type.  Then for
each $m\in M$ there exist a closed convex cone $C_m$ in
$\lie{t}^*\cap\H_\zeta$ with apex $\phi(m)$, and a basis of
$K$-invariant open neighborhoods $U$ of $m$ such that
\begin{enumerate}
\item\label{item;fibre-connected}
the fibres of the map $\phi|_U$ are connected;
\item\label{item;open}
$\phi\colon U\to C_m$ is an open map.
\end{enumerate}
\end{theorem}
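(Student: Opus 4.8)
The plan is to reduce to the symplectic case on an equivariant tubular neighbourhood of $K\cdot m$ and then transport the resulting local convexity to $\phi$ by means of the rescaling map $\varrho$. First I would apply Proposition~\ref{proposition;local-model} to an equivariant tubular neighbourhood $V$ of $K\cdot m$: choosing the $K$-invariant primitive $f$ of $\theta|_V$ normalized by $f(m)=0$, the form $\Omega=e^f\omega|_V$ is symplectic and $\Psi=e^f\Phi|_V$ is an equivariant moment map for the Hamiltonian $K$-action on $(V,\Omega)$. Writing $\psi=q\circ\Psi\colon V\to C$, the classical local convexity theorem for Hamiltonian actions on symplectic manifolds (in the form due to Guillemin--Sternberg, Sjamaar, and Lerman--Meinrenken--Tolman--Woodward, via the Marle--Guillemin--Sternberg normal form) supplies a closed convex polyhedral cone $C'_m\subseteq\lie{t}^*$ with apex $\psi(m)$ and a basis of $K$-invariant open neighbourhoods $U$ of $m$ such that $\psi|_U\colon U\to C'_m$ is open with connected fibres; after shrinking one may arrange $\psi(U)=C'_m\cap W$ with $W$ a convex neighbourhood of $\psi(m)$.

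The link between the two pictures is the identity $\varrho\circ\Psi=\Phi|_V$. Indeed $\Phi(M)\subseteq\H_\zeta$ by Lemma~\ref{lemma;affine-plane}, so $\inner{\Phi(x),\zeta}=1$ and $\inner{\Psi(x),\zeta}=e^{f(x)}>0$; hence $\Psi$ takes values in $\H_\zeta^+$ and $\varrho(\Psi(x))=\Psi(x)/\inner{\Psi(x),\zeta}=\Phi(x)$ by~\eqref{equation;rescale}. Composing with $q$ and using the commuting square of Lemma~\ref{lemma;rescale}\eqref{item;equivariant-chamber} gives
\[
\phi|_V=q\circ\Phi|_V=q\circ\varrho\circ\Psi=\varrho\circ q\circ\Psi=\varrho\circ\psi,
\]
and in particular $\phi(m)=\varrho(\psi(m))=\psi(m)$, the last equality because $\psi(m)$ already lies in $\H_\zeta$. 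I would then define $C_m$ to be the tangent cone at $\phi(m)$ of the set $\varrho(C'_m\cap\overline B)$, where $\overline B\subseteq\H_\zeta^+$ is a small closed ball about $\psi(m)$; by Lemma~\ref{lemma;rescale}\eqref{item;convex-ray} this image is a convex polytope, so $C_m$ is a closed convex polyhedral cone with apex $\phi(m)$, and since $\varrho$ preserves scalar multiples it lies in $\lie{t}^*\cap\H_\zeta$. Shrinking $U$ so that $\psi(U)\subseteq C'_m\cap\overline B$, openness of $\phi|_U$ onto $C_m$ follows by composing the open surjection $\psi|_U$ with $\varrho$: since $\psi|_U$ is an open surjection onto $\psi(U)$, it suffices that $\varrho$ restrict to an open map on $\psi(U)$, and this holds because $\varrho$ is a projection along the rays through the origin, whose restriction to the convex set $\psi(U)$ is open onto its image by Lemma~\ref{lemma;rescale}\eqref{item;open-connected}.

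The step I expect to be the main obstacle is the connectedness of the fibres of $\phi|_U$, because $\varrho$ collapses each ray through the origin, so a single fibre of $\phi$ is the $\psi$-preimage of an entire ray rather than of a point. I would handle this with the elementary principle that an open surjection $g\colon X\to Y$ with connected fibres pulls back connected sets to connected sets: if $g^{-1}(B)=U_1\sqcup U_2$ with $U_1,U_2$ relatively open and nonempty, then each fibre lies in one of them, so $g(U_1)$ and $g(U_2)$ are disjoint, open (as $g$ is open), and cover the connected set $B$, a contradiction. For $c\in C_m$ one has $\phi^{-1}(c)\cap U=(\psi|_U)^{-1}\bigl(\varrho^{-1}(c)\cap\psi(U)\bigr)$, where $\varrho^{-1}(c)$ is the open ray $\{\,tc\mid t>0\,\}$ by~\eqref{equation;rescale}; since $\psi(U)=C'_m\cap W$ is convex, the set $\varrho^{-1}(c)\cap\psi(U)$ is the intersection of a ray with a convex set and hence connected. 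Applying the principle to the open surjection with connected fibres $\psi|_U\colon U\to\psi(U)$ then shows that $\phi^{-1}(c)\cap U$ is connected, which completes the argument.
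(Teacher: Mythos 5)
Your proposal is correct and takes essentially the same route as the paper's proof: symplectify an equivariant tubular neighbourhood via Proposition~\ref{proposition;local-model}, establish $\phi|_V=\varrho\circ\psi$, invoke the symplectic local convexity theorem for $\psi$, and transport the cone, the openness, and the fibre connectedness through $\varrho$ using Lemma~\ref{lemma;rescale} together with the open-surjection-with-connected-fibres principle. The only deviations are cosmetic---the paper defines $C_m=\varrho(\Delta_m\cap\H_\zeta^+)$ directly rather than as a tangent cone of a truncated image, and your extra step of shrinking $U$ so that $\psi(U)=C'_m\cap W$ with $W$ convex (which makes each set $\varrho^{-1}(c)\cap\psi(U)$ visibly connected) is a detail that the paper's own connectedness argument relies on implicitly.
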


\begin{proof}
Let $m\in M$ and let $V$ be an equivariant tubular neighborhood of the
orbit $K\cdot m$.  Choose a function $f\in\ca{C}^\infty(V)$ as in
Proposition~\ref{proposition;local-model}, and let
$\Omega=e^f\cdot\omega|_V$ and $\Psi=e^f\cdot\Phi|_V$.  Then
$(V,\Omega)$ is a symplectic Hamiltonian $K$-manifold with moment map
$\Psi$.  Let $x\in V$.  Since $\Phi(x)\in\H_\zeta$ (by
Lemma~\ref{lemma;affine-plane}) and $\Psi(x)$ is a positive scalar
multiple of $\Phi(x)$, we have $\Psi(x)\in\H_\zeta^+$ and therefore
$\Phi(x)=\varrho(\Psi(x))$.  This shows that
$\Phi|_V=\varrho\circ\Psi$.  Putting $\psi=q\circ\Psi$ we obtain from
Lemma~\ref{lemma;rescale}\eqref{item;equivariant-chamber} that
\begin{equation}\label{equation;varrho-phi}
\phi|_V=q\circ\Phi|_V=q\circ\varrho\circ\Psi=\varrho\circ
q\circ\Psi=\varrho\circ\psi,
\end{equation}
and in particular $\phi(V)=\varrho\circ\psi(V)$ (see
Figure~\ref{figure;rescale}).
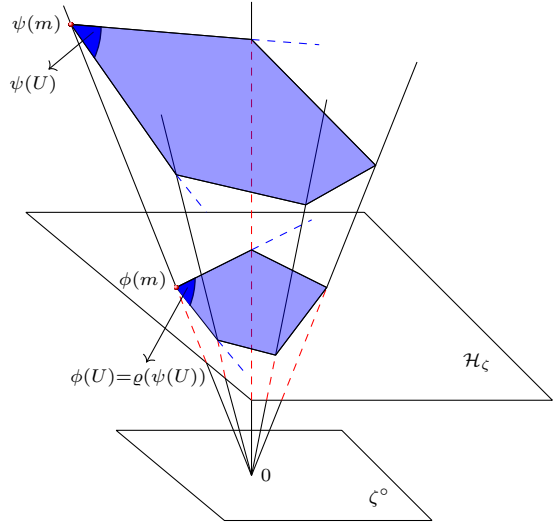
\begin{figure}[h]
\centering
\begin{tikzpicture}[scale=1]
\draw[thick,gray]
%origin
(0,0) coordinate (o) (-1.75,0.6) coordinate (a) (-1.8,0.6) coordinate
(a) (1.2,0.6) coordinate (b) (2.4,-0.6) coordinate (c)
%vertexes of first plane
(-0.36,-0.6) coordinate (d) (-3,3.5) coordinate (e) (1.5,3.5)
coordinate (f) (4,1) coordinate (g)
%vertexes of second plane
(0,1) coordinate (h) (-0.6,1.5) coordinate (l11) (-1,2.5) coordinate
(l12) (-2.4,6) coordinate (l13)
%1st line
(-2.5,6.25) coordinate (l14) (0,1) coordinate (l21) (0,3) coordinate
(l22) (0,5.8) coordinate (l23)
%2nd line
(0,6.3) coordinate (l24) (0.4,1) coordinate (l31) (1,2.5) coordinate
(l32) (1.65,4.125) coordinate (l33)
%3th line
(2.2,5.5) coordinate (l34) (0.2,1) coordinate (l41) (0.32,1.6)
coordinate (l42) (0.72,3.6) coordinate (l43)
%4th line
(1,5) coordinate (l44) (-0.35,1.3) coordinate (l51) (-0.45,1.8)
coordinate (l52) (-1,4) coordinate (l53)
%5th line
(-1.2,4.8) coordinate (l54) (0.8,3.4) coordinate (l221) (-0.1,1.38)
coordinate (l521) (0.9,5.73) coordinate (l231) (-0.6,3.5) coordinate
(l531) (2,0) coordinate (n1) (2.5,2) coordinate (n2) (-2.5,2)
coordinate (n3) (-1.5,6) coordinate (n4) ;
%draw two planes
\draw (o) node[right,black] {$\scriptstyle0$};
\draw (a) -- (b) (b) -- (c)(c) -- (d) (d) -- (a) (e)--(f) (f)--(g)
(g)--(h) (h)--(e);
%draw 1st lines
\draw (o) -- (l11) (l12)--(l13) (l13) --(l14);
\draw[red,dashed](l11) -- (l12);
%draw 2st lines
\draw (o) -- (l21) (l23) --(l24);
\draw[red,dashed](l21) -- (l22)(l22)--(l23);
% draw 3th lines
\draw (o) -- (l31) (l32)--(l33) (l33) --(l34);
\draw[red,dashed](l31) -- (l32);
%draw 4th lines
\draw (o) -- (l41) (l42)--(l43) (l43) --(l44);
\draw[red,dashed](l41) -- (l42);
%draw 5th lines
\draw (o) -- (l51) (l52)--(l53) (l53) --(l54);
\draw[red,dashed](l51) -- (l52);
%draw pentagons
\draw (l12)--(l22) (l22)--(l32) (l32)--(l42) (l42)--(l52)(l52)--(l12)
(l13)--(l23) (l23)--(l33) (l33)--(l43) (l43)--(l53)(l53)--(l13);
\draw[blue,dashed](l23)--(l231)(l53)--(l531)(l22)--(l221)(l52)--(l521);
\filldraw[blue,line width=0.5,draw=black](l12)--(-0.75,2.625) arc
(0:-21.5:1);
\filldraw[blue,line width=0.5,draw=black](l13)--(-2,5.968) arc
(0:-24.6:1);
\node (n1) at (1.7,-0.3) {$\scriptstyle\zeta^{\circ}$};
\node (n2) at (3,1.5) {$\scriptstyle\H_\zeta$};
\node (n3) at (-1.5,1.3) {$\scriptstyle\phi(U)=\varrho(\psi(U))$};
\node (n4) at (-2.9,5.2) {$\scriptstyle\psi(U)$};
\draw (l13) node[left] {$\scriptstyle\psi(m)$} (-1,2.6) node[left]
      {$\scriptstyle\phi(m)$};
\draw[->] (-0.85,2.5)--(-1.4,1.5); \draw[->] (-2.1,5.9)--(-2.7,5.4);
\filldraw[fill=blue,even odd rule,fill
  opacity=0.4](l12)--(l22)--(l32)--(l42)--(l52);
\filldraw[fill=blue,even odd rule,fill
  opacity=0.4](l13)--(l23)--(l33)--(l43)--(l53)--(l13);
\shade[ball color=red] (l12) circle (0.03);
\shade[ball color=red] (l13) circle (0.03);
\end{tikzpicture}
\caption{Projecting the local moment cone}\label{figure;rescale}
\end{figure}
The symplectic version of the local convexity
theorem~\cite[Theorem~6.5]{Sj98} asserts that there exist a basis of
$K$-invariant open neighborhoods $U\subset V$ of $m$ and a rational
convex polyhedral cone $\Delta_m\subset\lie{t}^*\subset\lie{k}^*$ with
apex $\psi(m)$ such that $\psi|_U\colon U\to\Delta_m$ is an open map
with connected fibres.  In particular, the set $\psi(U)$ is a
neighborhood of $\psi(m)$ in $\Delta_m$ and the cone with apex
$\psi(m)$ spanned by $\psi(U)$ is equal to $\Delta_m$.  Define
\[C_m=\varrho(\Delta_m\cap\ca{H}^+_\zeta).\]
Lemma~\ref{lemma;rescale}\eqref{item;convex-ray} shows that $C_m$ is a
closed convex polyhedral cone, namely the cone with apex
$\varrho(\psi(m))=\phi(m)$ spanned by $\varrho(\psi(U))$.
Lemma~\ref{lemma;rescale}\eqref{item;open-connected} shows that
$\varrho\colon\Delta_m\cap\H^+_\zeta\to C_m$ is an open map (for the
subspace topologies inherited from $\H^+_\zeta$ and $\H_\zeta$,
respectively).  It now follows from~\eqref{equation;varrho-phi} that
$\phi\colon U\to C_m$ is an open map, which proves~\eqref{item;open}.

Let $x\in U$ and put $a=\phi(x)$.  Put $A=\varrho^{-1}(a)$; then $A$
is connected by Lemma~\ref{lemma;rescale}\eqref{item;open-connected}
and $(\phi|_U)^{-1}(a)=(\psi|_U)^{-1}(A)$
by~\eqref{equation;varrho-phi}.  As the map $\psi\colon
U\to\Delta_m\cap\H^+_\zeta$ is open, so is the surjective map
\[\psi\colon\psi^{-1}(A)\to A.\]
Now suppose that $(\phi|_U)^{-1}(a)=(\psi|_U)^{-1}(A)$ was not
connected.  Then it is the union of two disjoint nonempty open subsets
$W_1$ and $W_2$.  Therefore the sets $\psi(W_1)$ and $\psi(W_2)$ form
an open covering of $A$.  Thanks to the connectedness of $A$, the
intersection $B=\psi(W_1)\cap\psi(W_2)$ is nonempty and therefore
$(\psi|_U)^{-1}(b)$ is not connected for any $b\in B$.  This
contradicts the fact that the fibres of the map $\psi\colon
U\to\Delta_m$ are connected, which completes the proof
of~\eqref{item;fibre-connected}.
\end{proof}

%%%%%%%%%%%%%%%%%%%%%%%%%%%%%%%%%%%%%%%%%%%%%%%%%%%%%%%%%%%%%%%%%%%%%%%%
\subsection*{From local to global}
%%%%%%%%%%%%%%%%%%%%%%%%%%%%%%%%%%%%%%%%%%%%%%%%%%%%%%%%%%%%%%%%%%%%%%%%

We will now derive the global convexity theorem by means of a
local-global principle.  We use the version of the local-global
principle due to \cite{HNP94}.  Let $X$ be a connected Hausdorff
topological space and $E$ a finite dimensional vector space.  A
continuous map $\Psi\colon X\to E$ is said to be \emph{locally fiber
  connected}, if every point $x$ in $X$ admits arbitrarily small
neighborhoods $U$ such that $\Psi^{-1}(\Psi(u))\cap U$ is connected
for all $u\in U$.  We say that a map $x\mapsto C_x$ assigning to each
point $x\in X$ a closed convex cone $C_x\subset E$ with apex $\Psi(x)$
is a system of \emph{local convexity data} if the following conditions
hold:
\begin{enumerate}
\item
for each $x$ there exists an arbitrarily small open neighborhood $U_x$
such that the map $\Psi\colon U_x\to C_x$ is open;
\item
$\Psi^{-1}(\Psi(u))\cap U_x$ is connected for all $u\in U_x$.
\end{enumerate}

\begin{theorem}[{\cite[Theorem 3.10]{HNP94}}]\label{l-g-p}
Suppose that $\Psi\colon X\to E$ is a proper, locally fiber connected
map with the local convexity data $(C_x)_{x\in X}$.  Then the fibres
of $\Psi$ are connected and $\Psi\colon X\to\Psi(X)$ is an open map;
moreover, $\Psi(X)$ is a closed convex polyhedral subset of $E$.
\end{theorem}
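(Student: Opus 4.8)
The plan is to deduce the three conclusions from two classical facts: the Tietze--Nakajima theorem, that a closed connected locally convex subset of a finite-dimensional real vector space is convex, and the fact that a continuous proper map into a locally compact Hausdorff space is closed. Throughout write $\Delta=\Psi(X)$. The two ``soft'' conclusions come first. Since $X$ is connected and $\Psi$ is continuous, $\Delta$ is connected; since $\Psi$ is proper and $E$ is locally compact Hausdorff, $\Psi$ is a closed map, so $\Delta$ is closed in $E$.

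The heart of the matter is the local structure of $\Delta$ near a point $p=\Psi(x)$. By hypothesis there is a small neighborhood $U_x$ on which $\Psi\colon U_x\to C_x$ is open, so $\Psi(U_x)$ is a relatively open neighborhood of the apex $p$ in $C_x$; this already yields the inclusion $C_x\cap N\subseteq\Delta$ for a small neighborhood $N$ of $p$ in $E$. I would then establish the reverse inclusion $\Delta\cap N\subseteq C_x$, i.e.\ that the image does not protrude beyond the local cone. Here properness enters decisively: the fibre $\Psi^{-1}(p)$ is compact, so it is covered by finitely many neighborhoods $U_{x_1},\dots,U_{x_r}$ with $\Psi(x_i)=p$, and any sequence in $\Delta$ converging to $p$ lifts, after passing to a subsequence, to a sequence in $X$ converging into this fibre. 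The delicate point is that the cones $C_{x_i}$ attached to the various points of the fibre must be shown to coincide (they share the apex $p$); this compatibility is exactly what couples the local convexity statement to the connectedness of the fibre. Once $\Delta\cap N=C_x\cap N$ is known, both local convexity of $\Delta$ and openness of $\Psi\colon X\to\Delta$ follow at once, and the Tietze--Nakajima theorem then upgrades local convexity of the closed connected set $\Delta$ to global convexity.

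Because of this coupling I expect the cleanest organization to prove fibre connectedness, openness, and local convexity \emph{simultaneously} by induction on $\dim E$, reducing to the real-valued case by composing $\Psi$ with linear functionals $\alpha\in E^*$, whose level sets inherit local convexity data from the cones $C_x$. With fibre connectedness in hand, the finitely many cones along a single fibre are forced to agree by propagating the identity $\Delta\cap N=C_{x_i}\cap N$ across the connected fibre, which closes the induction. For polyhedrality, when each $C_x$ is a convex polyhedral cone (as it is in the situation of Theorem~\ref{theorem;local-convexity}), the local finiteness furnished by properness does the job: each point of $\Delta$ has a neighborhood meeting only the finitely many facets coming from the cones $C_{x_i}$ over its compact fibre, so $\Delta$ is cut out by a locally finite family of closed affine halfspaces.

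The main obstacle is precisely the reverse inclusion $\Delta\cap N\subseteq C_x$ together with the consistency of the local cones along a fibre: this is where properness and fibre connectedness must be played against each other, and it is the reason the three assertions resist being proved in isolation and are most naturally handled together in a single induction.
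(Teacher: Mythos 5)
You should first be aware that the paper does not prove Theorem~\ref{l-g-p} at all: it is quoted from \cite[Theorem~3.10]{HNP94} and used as a black box in the proof of Theorem~\ref{theorem;main}, so the only proof to compare yours with is the one in \cite{HNP94}. Your overall strategy --- Tietze--Nakajima plus a local analysis of the image near a fibre --- is indeed the classical route for such local-global principles, and several of your individual observations are sound: properness makes $\Psi$ a closed map, so $\Delta=\Psi(X)$ is closed and connected; properness also confines $\Delta$ near $p=\Psi(x)$ to the union $C_{x_1}\cup\dots\cup C_{x_r}$ of the cones attached to a finite subcover of the compact fibre $\Psi^{-1}(p)$; and you are right that the polyhedrality conclusion genuinely requires the cones $C_x$ to be polyhedral (as they are in the application, via Theorem~\ref{theorem;local-convexity}), not merely closed and convex.

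The genuine gap is that the two statements you yourself single out as ``the main obstacle'' --- the reverse inclusion $\Delta\cap N\subseteq C_x$ and the coincidence of the cones $C_{x_1},\dots,C_{x_r}$ attached to distinct points of one fibre --- are never proved, and together with fibre connectedness they essentially \emph{are} the theorem. Properness alone gives only $\Delta\cap N\subseteq C_{x_1}\cup\dots\cup C_{x_r}$, and a finite union of distinct closed convex cones with a common apex is in general not convex, so the local convexity needed as input to Tietze--Nakajima is unavailable until the cones are known to agree; your mechanism for making them agree is propagation along the connected fibre, but fibre connectedness is itself one of the conclusions. You propose to break this circle by a simultaneous induction on $\dim E$, restricting $\Psi$ to level sets of functionals $\alpha\in E^*$, but no step of that induction is carried out, and it faces a concrete obstruction you do not address: to apply the inductive hypothesis to the restriction of $\Psi$ to a level set $(\alpha\circ\Psi)^{-1}(c)$, that level set must be \emph{connected}, and proving its connectedness is exactly the same kind of statement as the fibre-connectedness conclusion you are trying to establish. (In Atiyah-style proofs of the abelian convexity theorem this connectedness comes from Morse--Bott theory, which is unavailable for a general topological space $X$; the paper even remarks that moment map components need not be Morse--Bott in the conformal setting.) You also do not verify that the restricted maps inherit openness onto the sliced cones $C_x\cap\alpha^{-1}(c)$, nor formulate the induction hypothesis. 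As written, the proposal is a plausible plan that stops exactly where the actual work in \cite{HNP94} begins.
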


\begin{proof}[Proof of Theorem~\ref{theorem;main}]
Theorem~\ref{theorem;local-convexity} shows that the family of cones
$(C_m)_{m\in M}$ is a system of local convexity data for the map
$\phi\colon M\to C$.  On account of the properness of the moment map
$\Phi$ and the quotient map $q$, the map $\phi=q\circ\Phi\colon M\to
C$ is proper.  Theorem~\ref{theorem;main} now follows immediately from
Theorem~\ref{l-g-p}.
\end{proof}

\begin{remarks}
\begin{numerate}
\item\label{item;cones}
Recall that a closed convex subset of a finite dimensional vector
space can be expressed as the intersection of all its supporting
halfplanes.  As shown in~\cite[\S\,1]{HNP94}), we have
\[C_m=\phi(m)+L_{\phi(m)}(\phi(M)),\]
where $L_{\phi(m)}(\phi(M))$ is the intersection of all supporting
halfplanes of $\phi(M)$ at the point $\phi(m)$.  It follows that
\[\Delta(M)=\bigcap_{m\in M}C_m,\]
which is an intersection of a locally finite family of convex cones.
\item\label{item;conformal}
Let $a\in\ca{C}^\infty(M,\R_{>0})$.  A gauge transformation
$a\cdot(\omega,\theta)$ changes the moment map to $a\Phi$, which
usually destroys the convexity of the image.  Thus the moment body
$\Delta(M)$ is not a conformal invariant of $M$.
\item\label{item;rational}
The above proof does not tell us whether the moment body $\Delta(M)$
is semirational or not.  The reason lies in the fact the the rescaling
map does not preserve the rationality of convex cones in general.
However, for strict conformal symplectic manifolds of the first kind
we will state a necessary and sufficient condition for the moment body
to be semirational; see Theorem~\ref{theorem;conformal-first}.
\end{numerate}
\end{remarks}

We finish this section with a counterpart of
Theorem~\ref{theorem;main} for global conformal symplectic manifolds.

\begin{theorem}\label{theorem;global}
Let $(M,\omega,\theta=df)$ be a connected global conformal symplectic
manifold equipped with a $K$-action which leaves the forms $\omega$
and $\theta$ invariant.  Suppose that the action is Hamiltonian and
that the moment map $\Phi$ for the action is proper.  Suppose also
that the image of $\Phi$ is contained in the hyperplane $\H_\zeta$ for
some $\zeta\in\lie{k}$.  Then the conclusions of the convexity
theorem, Theorem~\ref{theorem;main}, hold for $M$.
\end{theorem}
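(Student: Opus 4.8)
The plan is to run the proofs of Theorems~\ref{theorem;local-convexity} and~\ref{theorem;main} essentially verbatim, the only genuine difference being that in the global case the primitive of $\theta$, and hence the auxiliary symplectic form, is defined on all of $M$ at once rather than merely on a tubular neighbourhood. Choose a maximal torus $T$ of $K$, a closed chamber $C\subseteq\lie{t}^*$, the quotient map $q\colon\lie{k}^*\to C$, and set $\phi=q\circ\Phi$, exactly as in the setup of Theorem~\ref{theorem;main}.

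Before starting I would make two normalizations. First, I claim $\zeta$ may be taken central. Since $\Phi$ is equivariant, $\Phi(M)$ is invariant under the coadjoint action, so for every $v\in\Phi(M)$ and $g\in K$ we have $\Ad^*_{g^{-1}}v\in\Phi(M)\subseteq\H_\zeta$ and hence $\inner{v,\Ad_g\zeta}=\inner{\Ad^*_{g^{-1}}v,\zeta}=1$. Averaging, the central element $\bar\zeta=\int_K\Ad_g\zeta\,dg$ satisfies $\inner{v,\bar\zeta}=1$ for all $v\in\Phi(M)$, so $\Phi(M)\subseteq\H_{\bar\zeta}$ with $\bar\zeta$ central and nonzero. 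Replacing $\zeta$ by $\bar\zeta$ I may assume $\zeta\in\lie{z}(\lie{k})\subseteq\lie{t}$, which is precisely what is required for the hyperplane $\H_\zeta$ and the halfspace $\H_\zeta^+$ to be $\Ad^*$-invariant and for Lemma~\ref{lemma;rescale} to apply. Second, since $\theta$ is closed and $K$-invariant, averaging its primitive produces a $K$-invariant $f$ with $df=\theta$; then $\Omega=e^f\omega$ is a $K$-invariant symplectic form and, by the computation in Proposition~\ref{proposition;local-model}\eqref{item;symplectic}, $\Psi=e^f\Phi$ is an equivariant moment map for the Hamiltonian $K$-action on the symplectic manifold $(M,\Omega)$.

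With these normalizations the argument of Theorem~\ref{theorem;local-convexity} applies without change. For $x\in M$ we have $\inner{\Psi(x),\zeta}=e^{f(x)}\inner{\Phi(x),\zeta}=e^{f(x)}>0$, so $\Psi(M)\subseteq\H_\zeta^+$ and $\Phi=\varrho\circ\Psi$; consequently $\phi=\varrho\circ\psi$ with $\psi=q\circ\Psi$, just as in~\eqref{equation;varrho-phi}. The symplectic local convexity theorem~\cite[Theorem~6.5]{Sj98} applied to $(M,\Omega,\Psi)$ furnishes, for each $m$, a rational polyhedral cone $\Delta_m\subseteq\lie{t}^*$ with apex $\psi(m)$ and a neighbourhood basis of $m$ on which $\psi$ is open with connected fibres. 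Pushing these data through the rescaling map by means of Lemma~\ref{lemma;rescale} yields closed convex cones $C_m=\varrho(\Delta_m\cap\H_\zeta^+)\subseteq\lie{t}^*\cap\H_\zeta$ with apex $\phi(m)$, together with a neighbourhood basis on which $\phi$ is open with connected fibres; thus $(C_m)_{m\in M}$ is a system of local convexity data for $\phi\colon M\to C$.

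Finally, $\phi=q\circ\Phi$ is proper, since $\Phi$ is proper by hypothesis and $q$ is proper, so Theorem~\ref{l-g-p} applies and delivers exactly the conclusions of Theorem~\ref{theorem;main}. The point deserving attention---and the reason this is not an immediate corollary of the symplectic Kirwan theorem for $(M,\Omega,\Psi)$---is that $\Psi=e^f\Phi$ need not be proper even though $\Phi$ is, because $e^f$ may be unbounded on $M$; it is the rescaled map $\phi$, rather than $\psi$, whose properness one controls, which is exactly why the rescaling map and the local-global principle must be invoked in place of a direct reduction to the symplectic case.
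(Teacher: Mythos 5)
Your proposal is correct and follows essentially the same route as the paper: the paper's own proof of Theorem~\ref{theorem;global} simply notes that the action is of Lee type, that the Lee element can be taken central (as in Lemmas~\ref{lemma;affine-plane} and~\ref{lemma;lee}), and then repeats the proof of Theorem~\ref{theorem;main} --- which is exactly your combination of a $K$-invariant global primitive $f$, the rescaling map $\varrho$, the symplectic local convexity theorem, and the local-global principle. Your one deviation --- centralizing $\zeta$ by averaging $\Ad_g\zeta$ and using equivariance of $\Phi$, rather than invariance of the Lee vector field --- is in fact the cleaner choice here, since in the global (non-strict) case the injectivity of $d_\theta$ fails and Lemma~\ref{lemma;affine-plane} cannot be invoked to deduce $\Phi(M)\subseteq\H_{\bar\zeta}$ for the averaged element, so your direct verification makes explicit a step the paper leaves implicit.
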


\begin{proof}
As in the proof of Lemma~\ref{lemma;affine-plane} one sees that the
action is of Lee type.  As in the proof of Lemma~\ref{lemma;lee} one
shows that the Lee element $\zeta$ can be assumed to be central.  The
remainder of the proof is identical to that of
Theorem~\ref{theorem;main}.
\end{proof}

The assumptions of this theorem apply for instance if $M$ is a closed
$K$-invariant global conformal symplectic submanifold of a strict
conformal symplectic manifold furnished with a Hamiltonian $K$-action.

%%%%%%%%%%%%%%%%%%%%%%%%%%%%%%%%%%%%%%%%%%%%%%%%%%%%%%%%%%%%%%%%%%%%%%%%
\section{Symplectic, conformal  symplectic, and presymplectic convexity}
\label{section;conformal-pre}
%%%%%%%%%%%%%%%%%%%%%%%%%%%%%%%%%%%%%%%%%%%%%%%%%%%%%%%%%%%%%%%%%%%%%%%%

In this section we explain how the conformal symplectic convexity
theorem of the previous section relates to Kirwan's symplectic
convexity theorem~\cite{Ki84} as well as to the presymplectic
convexity theorem of~\cite{LS17}.

The relationship to symplectic convexity arises from the fact that a
conformal symplectic manifold $M$ has symplectic covering manifolds
$\tilde{M}$, and that a Hamiltonian $K$-action on $M$ lifts to a
Hamiltonian action of a covering group $\tilde{K}$ on $\tilde{M}$.
Theorems~\ref{theorem;rank-1} and~\ref{theorem;rank-2} below assert
that in good cases the moment body $\Delta(\tilde{M})$ is a cone over
the moment body $\Delta(M)$, and that $\Delta(M)$ is the intersection
of $\Delta(\tilde{M})$ with an affine hyperplane.  These facts
generalize results for torus actions obtained in~\cite{BGP18}.

The relationship to presymplectic convexity comes from the fact that
the Lee form of a conformal symplectic manifold $M$ defines a singular
foliation, which we will call the \emph{Lee foliation}, whose regular
leaves are of codimension $1$ and carry natural presymplectic
structures.  The $K$-action preserves each leaf $L$, provided that the
action is of Lee type.  We will see that the Lee type condition is
equivalent to a leafwise transitivity condition for the action on $L$.
Theorem~\ref{theorem;lee-leaf} below asserts a form of Reeb stability
for the Lee foliation and also that the moment body $\Delta(L)$ is
equal to $\Delta(M)$ for all leaves $L$.  In particular all leaves
have the same moment body.  This fact was observed for Vaisman
manifolds in~\cite{BGP18}.

%%%%%%%%%%%%%%%%%%%%%%%%%%%%%%%%%%%%%%%%%%%%%%%%%%%%%%%%%%%%%%%%%%%%%%%%
\subsection*{Notation and conventions}
%%%%%%%%%%%%%%%%%%%%%%%%%%%%%%%%%%%%%%%%%%%%%%%%%%%%%%%%%%%%%%%%%%%%%%%%

In this section $(M,\omega,\theta)$ denotes a connected strict
conformal symplectic manifold equipped with a Hamiltonian action of
the compact connected Lie group $K$, which has equivariant moment map
$\Phi\colon M\to\lie{k}^*$.  The moment body of $M$ is denoted by
$\Delta(M)=\Phi(M)\cap C$, where $C$ is a fixed closed Weyl chamber in
$\lie{t}^*$, the dual of the Lie algebra $\lie{t}$ of a maximal torus
$T$ of $K$.

%%%%%%%%%%%%%%%%%%%%%%%%%%%%%%%%%%%%%%%%%%%%%%%%%%%%%%%%%%%%%%%%%%%%%%%%
\subsection*{Definitions}
%%%%%%%%%%%%%%%%%%%%%%%%%%%%%%%%%%%%%%%%%%%%%%%%%%%%%%%%%%%%%%%%%%%%%%%%

A \emph{presymplectic form} on a manifold $P$ is a closed $2$-form
$\sigma\in\Omega^2(P)$ of constant rank.  The kernel of a
presymplectic form $\sigma$ on $P$ is an involutive subbundle of $TP$,
which generates a (regular) foliation $\F_\sigma$ called the
\emph{null foliation} of $\sigma$.

Let $\F$ be a foliation of a manifold $P$.  Let $\X(\F)$ be the Lie
algebra of vector fields tangent to $\F$.  A vector field $X\in\X(M)$
is \emph{foliate} if $[X,Y]\in\X(\F)$ for all $Y\in\X(\F)$.  A smooth
map $\phi\colon X\to X$ is \emph{foliate} if $\phi(L)$ is contained in
a leaf of $\F$ for every leaf $L$ of $\F$.  A foliate vector field
generates a flow consisting of (local) foliate diffeomorphisms.
Suppose that a Lie group $G$ acts smoothly on $P$ by foliate
transformations.  Let $\lie{n}_\F$ be the set of all
$\xi\in\lie{g}=\Lie(G)$ such that the induced vector field $\xi_P$ is
tangent to $\F$.  Then $\lie{n}_\F$ is an ideal of $\lie{g}$ called
the \emph{null ideal} of the action.  (See~\cite[\S\,2.5]{LS17}.)  The
\emph{null subgroup} is the immersed (not necessarily closed) normal
subgroup $N_\F$ of $G$ generated by $\exp(\lie{n}_\F)$.  The
$G$-action on $(P,\F)$ is \emph{clean} if
\[T_p(G\cdot p)\cap T_p\F=T_p(N_\F\cdot p)\]
for every $p\in P$.  The $G$-action is \emph{leafwise transitive} if
$T_p\F=T_p(N_\F\cdot p)$ for every $p\in P$.  Clearly, if the action
is leafwise transitive, it is clean.

%%%%%%%%%%%%%%%%%%%%%%%%%%%%%%%%%%%%%%%%%%%%%%%%%%%%%%%%%%%%%%%%%%%%%%%%
\subsection*{Presentations}
%%%%%%%%%%%%%%%%%%%%%%%%%%%%%%%%%%%%%%%%%%%%%%%%%%%%%%%%%%%%%%%%%%%%%%%%

See~\cite[\S\,2.1]{BGP18} for the following definitions and facts.  A
\emph{presentation} of the strict conformal symplectic manifold
$(M,\omega,\theta)$ is a connected symplectic manifold
$(\tilde{M},\tilde{\omega})$ equipped with a Galois covering map
\[p\colon\tilde{M}\longto M\]
with covering group $\Gamma$, a group homomorphism
$\chi\colon\Gamma\to(\R,+)$, and a function
$\tilde{f}\in\ca{C}^\infty(\tilde{M})$ called the \emph{potential} of
the presentation, which is required to have the following properties:
$0$ is a regular value of $\tilde{f}$;
\[
\tilde\omega=e^{\tilde{f}}p^*\omega;\qquad d\tilde{f}=p^*\theta;\qquad
\text{and}\quad\gamma^*\tilde{f}=\tilde{f}+\chi(\gamma)\quad\text{for
  all $\gamma\in\Gamma$}.
\]
Given a presentation $\tilde{M}$, the potential $\tilde{f}$ is unique
up to an additive constant $c$ and the symplectic form
$\tilde{\omega}$ is unique up to a multiplicative constant $e^c$.  The
condition that $0$ should be a regular value of $\tilde{f}$ is not
imposed in~\cite{BGP18}, but is easy to fulfil: pick a regular value
$-c$ of $\tilde{f}$ and replace $\tilde{f}$ with $\tilde{f}+c$ and
$\tilde\omega$ with $e^c\tilde\omega$ to make $0$ a regular value.
Presentations of $M$ exist, e.g.\ the universal cover of $M$.  A
presentation $\hM$ of $M$ is \emph{minimal} if for every other
presentation $\tilde{M}$ there is a morphism of covering spaces
$\tilde{M}\to\hM$.  Minimal presentations exist and are unique up to
covering space isomorphisms.  A presentation $\hM$ is minimal if and
only if the corresponding homomorphism $\chi\colon\Gamma\to\R$ is
injective.

Conversely, suppose we are given a connected symplectic manifold
$(\tilde{M},\tilde\omega)$, a discrete group $\Gamma$ acting properly
and freely on $\tilde{M}$, a homomorphism $\chi\colon\Gamma\to\R$, and
a function $\tilde{f}$ such that
$\gamma^*\tilde\omega=e^{\chi(\gamma)}\tilde\omega$ and
$\gamma^*\tilde{f}=\tilde{f}+\chi(\gamma)$ for all $\gamma\in\Gamma$.
Then the forms $e^{-\tilde{f}}\tilde\omega$ and $d\tilde{f}$ are
$\Gamma$-invariant, and therefore descend to forms $\omega$ and
$\theta$ on $M=\tilde{M}/\Gamma$.  The pair $(\omega,\theta)$ is a
conformal symplectic structure on $M$ and $\tilde{M}$ is a
presentation of $M$.

The \emph{period homomorphism} $\per_\theta\colon\pi_1(M)\to\R$ is
defined by $\per_\theta([c])=\int_c\theta$; it depends only on the Lee
class $[\theta]\in H^1(M;\R)$.  The \emph{period group} of $\theta$ is
the subgroup $\Per_\theta=\per_\theta(\pi_1(M))\cong\Z^k$ of $\R$; the
\emph{rank} of $\theta$, or of $M$, is the rank $k$ of the period
group.  We have $\Per_\theta=\chi(\Gamma)$ for any presentation of $M$
and $1\le\rank(M)\le b_1(M)$.

%%%%%%%%%%%%%%%%%%%%%%%%%%%%%%%%%%%%%%%%%%%%%%%%%%%%%%%%%%%%%%%%%%%%%%%%
\subsection*{The moment cone of a presentation}
%%%%%%%%%%%%%%%%%%%%%%%%%%%%%%%%%%%%%%%%%%%%%%%%%%%%%%%%%%%%%%%%%%%%%%%%

Let $(\tilde{M},\tilde\omega,\Gamma,\chi,\tilde{f})$ be a presentation
of $(M,\omega,\theta)$.  The $K$-action on $M$ lifts to an action of
an appropriate covering group $\tilde{K}$ on $\tilde{M}$, which
commutes with the $\Gamma$-action.  By~\cite[Remark~4.4]{BGP18} the
$\tilde{K}$-action on the symplectic manifold
$(\tilde{M},\tilde{\omega})$ is Hamiltonian with equivariant moment
map given by
\[\tilde\Phi=e^{\tilde{f}}p^*\Phi.\]
By~\cite[Lemma~4.2]{BGP18}, if $\tilde{M}$ is minimal we can take
$\tilde{K}=K$.  We denote by
$\Delta(\tilde{M})=\tilde\Phi(\tilde{M})\cap C$ the moment body of
$\tilde{M}$ with respect to the Hamiltonian $\tilde{K}$-action.
Applying a conformal transformation $a\cdot(\omega,\theta)$ to the
conformal symplectic structure, where $a$ is a positive function on
$M$, has the effect of replacing the potential $\tilde{f}$ by
$\tilde{f}-\log p^*a$, but changes neither the symplectic form
$\tilde\omega$ nor the moment map $\tilde\Phi$, and therefore has no
effect on the moment body $\Delta(\tilde{M})$.  On the other hand,
shifting the potential by a constant $c$ has the effect of dilating
the moment body $\Delta(\tilde{M})$ by the positive constant $e^c$.
Up to such dilations, the moment body is independent of the
presentation $\tilde{M}$.  Thus the moment body $\Delta(\tilde{M})$,
unlike the moment body $\Delta(M)$ of $M$ itself, is a conformal
invariant of $M$, up to a positive multiplicative constant $e^c$.  In
fact, the equivariance property
\begin{equation}\label{equation;scale}
\gamma^*\tilde\Phi=e^{\chi(\gamma)}\tilde\Phi
\end{equation}
for all $\gamma\in\Gamma$ shows that $\Delta(\tilde{M})$ is preserved
by all dilations in the subgroup $\exp(\chi(\Gamma))$ of $\R_{>0}$.
Under what conditions is $\Delta(\tilde{M})$ preserved by all
dilations in $\R_{>0}$?  There is a natural dichotomy according to
whether $M$ has rank $1$ (when $\Per_\theta\cong\Z$ is discrete in
$\R$) or rank $\ge2$ (when $\Per_\theta$ is dense in $\R$).  The
following two results extend to the nonabelian case Theorems~5.13
and~5.15 of~\cite{BGP18}.

\begin{theorem}\label{theorem;rank-1}
Let $(M,\omega,\theta)$ be a compact connected strict conformal
symplectic manifold of rank $1$ equipped with a Hamiltonian
$K$-action.  Let $(\tilde{M},\tilde\omega,\Gamma,\chi,\tilde{f})$ be a
presentation of $(M,\omega,\theta)$.
\begin{enumerate}
\item\label{item;1-not}
Suppose that $0\not\in\Delta(M)$, or equivalently
$0\not\in\Delta(\tilde{M})$.  Then $\Delta(\tilde{M})\cup\{0\}$ is a
rational convex polyhedral cone and
$\Delta(\tilde{M})=\R_{>0}\cdot\Delta(M)$.
\item\label{item;1-lee}
Suppose that the $K$-action on $(M,\omega,\theta)$ is of Lee type with
central Lee element $\zeta$.  Then
$\Delta(M)=\Delta(\tilde{M})\cap\H_\zeta$.  Hence the convex polytope
$\Delta(M)$ is semirational if and only if the line $\R\zeta$ in
$\lie{t}^*$ is rational.
\end{enumerate}
\end{theorem}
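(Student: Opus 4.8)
The plan is to transfer convexity from the symplectic cover $\tilde M$ to $M$ by exploiting the dilation symmetry $\gamma^*\tilde\Phi=e^{\chi(\gamma)}\tilde\Phi$ of~\eqref{equation;scale}. First I would record two elementary consequences of $\tilde\Phi=e^{\tilde f}p^*\Phi$. Since each value $\tilde\Phi(\tilde m)=e^{\tilde f(\tilde m)}\Phi(p(\tilde m))$ is a positive multiple of $\Phi(p(\tilde m))$, and conversely every value of $\Phi$ is realized along any fibre of $p$, we get $\R_{>0}\cdot\tilde\Phi(\tilde M)=\R_{>0}\cdot\Phi(M)$; applying the quotient map $q$, which commutes with $\Ad^*$ and with dilations, yields $\R_{>0}\cdot\Delta(\tilde M)=\R_{>0}\cdot\Delta(M)$ together with the equivalence $0\in\Delta(M)\iff0\in\Delta(\tilde M)$. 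In rank $1$ the period group $\Per_\theta=\chi(\Gamma)$ equals $c_0\Z$ for some $c_0>0$; setting $\lambda=e^{c_0}>1$, the equivariance~\eqref{equation;scale} shows that $\Delta(\tilde M)$ is invariant under the dilation group $\lambda^{\Z}$.

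The core of part~\eqref{item;1-not} is to prove that $\Delta(\tilde M)$ is a closed, convex, rational polyhedral set. I would obtain this from the local--global convexity principle applied to $\tilde\phi=q\circ\tilde\Phi\colon\tilde M\to C$, exactly as in the proof of Theorem~\ref{theorem;main} but now staying on the symplectic side, so that no rescaling $\varrho$ intervenes and the rationality of the local cones is preserved, in contrast with Remark~\eqref{item;rational}. Two points need checking. First, the local convexity data: since $(\tilde M,\tilde\omega)$ is symplectic and $\tilde K$-Hamiltonian, the symplectic local convexity theorem~\cite[Theorem~6.5]{Sj98} supplies at each $\tilde m$ a rational convex polyhedral cone and a neighbourhood basis on which $\tilde\phi$ is open with connected fibres, with no compactness hypothesis required. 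Second, properness: although $\tilde M$ is noncompact and $\tilde\Phi$ is not globally proper, the hypothesis $0\notin\Phi(M)$ with compactness of $M$ gives bounds $0<\alpha\le\abs{\Phi}\le\beta$, so for any compact $L\subset\lie k^*\setminus\{0\}$ the preimage $\tilde\Phi^{-1}(L)$ meets only a compact range of values of $\tilde f$ and is therefore compact; thus $\tilde\Phi$ is proper onto $\lie k^*\setminus\{0\}$ and $\tilde\phi$ is proper onto $C\setminus\{0\}$. The main obstacle is precisely this step: one must invoke the version of the local--global principle valid for a map that is proper onto an open convex subset (here the open cone $C\setminus\{0\}$), in order to conclude that $\tilde\phi(\tilde M)=\Delta(\tilde M)$ is closed in $C\setminus\{0\}$, convex, and a locally finite intersection of rational halfspaces.

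Granting this, part~\eqref{item;1-not} follows formally. A convex set invariant under $\lambda^{\pm1}$ is automatically $\R_{>0}$-invariant: for $v$ in the set the segment from $\lambda^{-1}v$ to $\lambda v$ lies in it and sweeps out $[\lambda^{-1},\lambda]\cdot v$, and iterating with $\lambda^{\Z}$ fills the entire ray $\R_{>0}\cdot v$. Hence $\Delta(\tilde M)$ is a cone, so $\Delta(\tilde M)=\R_{>0}\cdot\Delta(\tilde M)=\R_{>0}\cdot\Delta(M)$, and $\Delta(\tilde M)\cup\{0\}$ is a rational convex polyhedral cone.

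For part~\eqref{item;1-lee}, the Lee type hypothesis gives $\Phi(M)\subseteq\H_\zeta$ by Lemma~\ref{lemma;affine-plane}, so in particular $0\notin\Phi(M)$ and part~\eqref{item;1-not} applies. Since $\inner{v,\zeta}=1$ for every $v\in\Delta(M)\subseteq\H_\zeta$, each ray $\R_{>0}\cdot v$ meets $\H_\zeta$ only at $v$; combined with $\Delta(\tilde M)=\R_{>0}\cdot\Delta(M)$ this yields $\Delta(\tilde M)\cap\H_\zeta=\Delta(M)$. Finally write $\Delta(M)=\mathcal C\cap\H_\zeta$ with $\mathcal C=\Delta(\tilde M)\cup\{0\}=\{\,v\mid\inner{v,\phi_i}\ge0\,\}$ a rational polyhedral cone, $\phi_i\in\lie t_\Q$. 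If $\R\zeta$ is rational, say $\zeta=s\zeta_0$ with $\zeta_0\in\lie t_\Q$ and $s>0$, then $\Delta(M)=\{\,v\mid\inner{v,\phi_i}\ge0,\ \inner{v,\zeta_0}=1/s\,\}$ is an intersection of semirational halfspaces and hence semirational. Conversely, if $\Delta(M)$ is semirational then its affine hull is a hyperplane with rational normal direction; as this affine hull is $\H_\zeta$ (the case where $\Delta(M)$ has nonempty interior in $\H_\zeta$), the direction $\R\zeta$ must be rational. Therefore $\Delta(M)$ is semirational if and only if the line $\R\zeta$ is rational, which completes the proof.
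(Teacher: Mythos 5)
Your overall strategy is the same as the paper's: use the scaling equivariance \eqref{equation;scale} together with a Kirwan-type convexity theorem for $\tilde\Phi$ to show $\Delta(\tilde M)$ is a rational polyhedral cone (minus its apex), then intersect with $\H_\zeta$ for part (ii). The paper simply cites the convexity theorem of \cite{LMTW98} for moment maps that are proper over $\lie{k}^*\setminus\{0\}$, whereas you rerun the local--global principle with \cite[Theorem~6.5]{Sj98}; that difference is immaterial. Your explicit segment argument that $\lambda^{\Z}$-invariance plus convexity forces $\R_{>0}$-invariance is exactly what the paper leaves implicit, and your part (ii) matches the paper's.

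However, there is a genuine gap in your properness step, and it is precisely the point the paper is careful about. You claim that for compact $L\subset\lie{k}^*\setminus\{0\}$ the preimage $\tilde\Phi^{-1}(L)$ ``meets only a compact range of values of $\tilde f$ and is therefore compact''. The ``therefore'' needs $\tilde f^{-1}([a,b])$ to be compact, and this is false for a general presentation. Indeed, for every $\gamma\in\ker\chi$ one has $\gamma^*\tilde f=\tilde f$ and $\gamma^*\tilde\Phi=\tilde\Phi$, so $\tilde f^{-1}([a,b])$ and $\tilde\Phi^{-1}(L)$ are $\ker\chi$-invariant; if $\ker\chi$ is infinite these sets contain infinite discrete $\ker\chi$-orbits and cannot be compact, so $\tilde\Phi$ is \emph{not} proper onto $\lie{k}^*\setminus\{0\}$. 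Rank $1$ only constrains the image of $\per_\theta$, not its kernel: for instance $M=\bb{S}^1\times Q$ with $Q$ a compact contact manifold with $b_1(Q)\ge1$ is compact, strict, of rank $1$, and its universal cover is a presentation with $\ker\chi$ infinite, so the theorem as stated includes presentations for which your properness claim fails. The paper avoids this by first reducing, without loss of generality, to the \emph{minimal} presentation: moment bodies of different presentations agree up to a dilation, and all assertions of part (i) are dilation-invariant, so one may assume $\chi$ is injective. Then $\Gamma\cong\Z$ with $\chi(\Gamma)=c_0\Z$ discrete, and compactness of $M$ together with your estimate $0<\alpha\le\abs{\Phi}\le\beta$ does yield properness of $\tilde f$, hence of $\tilde\Phi$ onto $\lie{k}^*\setminus\{0\}$; with that one reduction inserted, your argument goes through. (A smaller point: in part (ii) your converse for semirationality assumes the affine hull of $\Delta(M)$ is all of $\H_\zeta$; when $\Delta(M)$ is lower-dimensional the normal-direction argument gives nothing, though the paper's own one-line justification is no more complete there.)
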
  

\begin{proof}
\eqref{item;1-not}~The moment body $\Delta(\tilde{M})$ being
independent of the presentation up to a dilation, we may, and will,
assume without loss of generality that the presentation $\tilde{M}$ is
minimal.  The assumptions $0\not\in\Phi(M)$, $M$ has rank $1$, and
$\tilde{M}$ is minimal imply that the moment map $\tilde\Phi$ is
proper when viewed as a map $\tilde{M}\to\lie{k}^*\setminus\{0\}$.
(See~\cite[Lemma~5.14]{BGP18}.  This lemma asserts that $\tilde\Phi$
is proper viewed as a map $\tilde{M}\to\lie{k}^*$, but that is
incorrect.)  Therefore, by the version of the Kirwan convexity theorem
due to~\cite{LMTW98}, the moment body $\Delta(\tilde{M})$ is a closed
convex polyhedral subset of the punctured chamber $C\setminus\{0\}$.
It now follows from the scaling property~\eqref{equation;scale} that
$\Delta(\tilde{M})$ is $\R_{>0}$-invariant.  Hence its closure
$\Delta(\tilde{M})\cup\{0\}$ is a closed convex polyhedral cone
contained in $C$.  The facets of $\Delta(\tilde{M})$ have rational
normal vectors, so the cone $\Delta(\tilde{M})\cup\{0\}$ is rational.

\eqref{item;1-lee}~We have $\Phi^\zeta=1$
(Lemma~\ref{lemma;affine-plane}) and
$p^*\Phi=e^{-\tilde{f}}\tilde{\Phi}$, so
\begin{equation}\label{equation;cone-project}
\Delta(M)=\varrho(\Delta(\tilde{M})),
\end{equation}
where $\varrho$ is the projection~\eqref{equation;rescale}.  On the
other hand, $\Phi^\zeta=1$ implies $0\not\in\Phi(M)$, so
$\Delta(\tilde{M})=\R_{>0}\cdot\Delta(M)$ by~\eqref{item;1-not}, and
hence $\Delta(M)=\Delta(\tilde{M})\cap\H_\zeta$.  The last assertion
now follows from~\eqref{item;1-not}.
\end{proof}

If $M$ has rank $\ge2$ the moment map $\tilde\Phi$ is not proper, but
nevertheless we get a slightly weaker conclusion under a stronger
hypothesis.

\begin{theorem}\label{theorem;rank-2}
Let $(M,\omega,\theta)$ be a compact connected strict conformal
symplectic manifold of rank $\ge2$ equipped with a Hamiltonian
$K$-action.  Let $(\tilde{M},\tilde\omega,\Gamma,\chi,\tilde{f})$ be a
presentation of $(M,\omega,\theta)$.
\begin{enumerate}
\item\label{item;2-not}
Suppose that $0\not\in\Delta(M)$.  Suppose also that the Lee form
$\theta$ vanishes nowhere, or equivalently that the potential
$\tilde{f}$ has no singular values.  Then
$\Delta(\tilde{M})=\R_{>0}\cdot\Delta(M)$.
\item\label{item;2-lee}
Suppose that the $K$-action on $(M,\omega,\theta)$ is of Lee type with
central Lee element $\zeta$.  Suppose also that for some
$a\in\ca{C}^\infty(M,\R_{>0})$ the gauge-transformed Lee form
$\theta-d\log a$ vanishes nowhere.  Then $\Delta(\tilde{M})\cup\{0\}$
is a convex polyhedral cone and
$\Delta(M)=\Delta(\tilde{M})\cap\H_\zeta$.
\end{enumerate}
\end{theorem}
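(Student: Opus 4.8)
The plan is to mirror the structure of the rank~$1$ proof (Theorem~\ref{theorem;rank-1}), but since $\Per_\theta$ is now dense in $\R$ the map $\tilde\Phi$ is no longer proper, so the Kirwan convexity theorem of~\cite{LMTW98} cannot be invoked directly on $\tilde M$. The key idea is that the equivariance property~\eqref{equation;scale} still forces $\Delta(\tilde M)$ to be invariant under the dense subgroup $\exp(\chi(\Gamma))\subset\R_{>0}$, and a convex set invariant under a dense subgroup of dilations (together with a closure argument) should be $\R_{>0}$-invariant, hence a cone. The extra hypothesis that $\theta$ (resp. the gauge-transformed form $\theta-d\log a$) vanishes nowhere is what replaces properness: it is exactly the condition that makes the potential $\tilde f$ free of singular values, so that the level sets $\{\tilde f=s\}$ are all smooth hypersurfaces transverse to the dilation flow.

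For part~\eqref{item;2-not} I would argue as follows. First, reduce to the minimal presentation, as in Theorem~\ref{theorem;rank-1}, using that $\Delta(\tilde M)$ is a conformal invariant up to dilation. The hypothesis $0\notin\Delta(M)$ gives $0\notin\Phi(M)$, hence (via $p^*\Phi=e^{-\tilde f}\tilde\Phi$) that $0\notin\tilde\Phi(\tilde M)$. The condition that $\theta$ vanishes nowhere means $d\tilde f=p^*\theta$ is nowhere zero, so $\tilde f\colon\tilde M\to\R$ is a submersion and every value of $\tilde f$ is regular; consequently each slice $\tilde M_s=\tilde f^{-1}(s)$ is a smooth hypersurface, and the restriction $\tilde\Phi|_{\tilde M_0}$ coincides up to the scalar $e^{0}=1$ with $p^*\Phi$ pulled back from $M$. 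Since $M$ is compact and $p$ restricted to a fundamental domain of $\Gamma$ has compact closure meeting $\tilde M_0$, I expect $\tilde\Phi|_{\tilde M_0}$ to have compact image equal to $\Delta(M)$ after projecting to the chamber. Then the scaling relation~\eqref{equation;scale}, applied across the dense set of values $\chi(\Gamma)$ and passing to the closure, shows $\overline{\tilde\Phi(\tilde M)}\cap C=\R_{>0}\cdot\Delta(M)$; one then checks the image is already closed so that $\Delta(\tilde M)=\R_{>0}\cdot\Delta(M)$.

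For part~\eqref{item;2-lee}, the Lee type hypothesis gives $\Phi^\zeta=1$ by Lemma~\ref{lemma;affine-plane}, whence $0\notin\Phi(M)$ automatically, and the rescaling identity $p^*\Phi=e^{-\tilde f}\tilde\Phi$ together with Lemma~\ref{lemma;rescale} yields the projection formula $\Delta(M)=\varrho(\Delta(\tilde M))$ exactly as in~\eqref{equation;cone-project}. The gauge hypothesis on $\theta-d\log a$ is what lets me apply part~\eqref{item;2-not}: replacing $\omega$ by $a\omega$ does not change $\tilde\omega$ or $\tilde\Phi$ (as noted before~\eqref{equation;scale}), so I may pass to a gauge-equivalent structure whose Lee form vanishes nowhere without altering $\Delta(\tilde M)$, apply~\eqref{item;2-not} to conclude $\Delta(\tilde M)=\R_{>0}\cdot\Delta(M)$ and that $\Delta(\tilde M)\cup\{0\}$ is a convex polyhedral cone, and then intersect with $\H_\zeta$ using $\varrho$ to recover $\Delta(M)=\Delta(\tilde M)\cap\H_\zeta$.

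The main obstacle I anticipate is the substitute for properness in part~\eqref{item;2-not}: establishing that $\Delta(\tilde M)$ is genuinely a closed convex polyhedral set, rather than merely an $\R_{>0}$-invariant convex set that might fail to be closed or polyhedral. In the rank~$1$ case this came for free from~\cite{LMTW98} applied to the proper map $\tilde M\to\lie k^*\setminus\{0\}$; here the density of $\Per_\theta$ destroys properness, so I must instead deduce polyhedrality and closedness from the compactness of the single slice $\tilde M_0$ (which carries the full information of $\Delta(M)$) and the nowhere-vanishing of $\tilde f$'s differential, leveraging the cone structure generated by dilation. Making this closure-and-polyhedrality argument rigorous, and confirming that the dense-dilation-invariant set is exactly the full cone over the compact convex polyhedral set $\Delta(M)$, is the delicate point.
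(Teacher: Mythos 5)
Your treatment of part~\eqref{item;2-lee} is essentially the paper's: gauge-invariance of $\Delta(\tilde{M})$, reduction to part~\eqref{item;2-not}, and the projection formula $\Delta(M)=\varrho(\Delta(\tilde{M}))$. One small slip there: part~\eqref{item;2-not} by itself does not yield polyhedrality of $\Delta(\tilde{M})\cup\{0\}$ --- in part~\eqref{item;2-not} the action need not be of Lee type and $\Delta(M)$ need not even be convex; the polyhedral cone statement in~\eqref{item;2-lee} comes from combining the cone identity with Theorem~\ref{theorem;main} applied to the compact (hence proper) Lee-type manifold $M$.

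The real issue is part~\eqref{item;2-not}, where your proposal has a genuine gap at exactly the point you flag as delicate, and the mechanism you offer to close it does not work. You want to extract $\Delta(M)$ from the single slice $\tilde{f}^{-1}(0)$ and claim its moment image is compact and equal (after projection to $C$) to $\Delta(M)$. But $p\bigl(\tilde{f}^{-1}(0)\bigr)$ is a leaf of the Lee foliation $\F_\theta$, and in rank $\ge2$ such a leaf is a dense, non-closed, non-compact injectively immersed hypersurface (the closed--dense dichotomy of Theorem~\ref{theorem;lee-leaf}); hence $\tilde\Phi\bigl(\tilde{f}^{-1}(0)\bigr)=\Phi\bigl(p(\tilde{f}^{-1}(0))\bigr)$ is only dense in $\Phi(M)$, not compact and not a priori all of $\Phi(M)$. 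Consequently the dilation argument via~\eqref{equation;scale} only shows that each ray $\R_{>0}\cdot\tilde\Phi(\tilde{x}_0)$ meets the image in a dense subset, and ``passing to the closure'' proves $\overline{\Delta(\tilde{M})}\supseteq\R_{>0}\cdot\Delta(M)$, not the asserted equality; nothing in your argument rules out the image omitting points of each ray. What is missing is a proof that the intersection of $\tilde\Phi(\tilde{M})$ with each such ray is also \emph{open} in the ray: openness together with invariance under the dense dilation group $\exp(\chi(\Gamma))$ then forces the intersection to be the entire ray. The paper obtains this openness by a stratification argument: for $\tilde{x}_0$ with infinitesimal stabilizer $\lie{h}$, the fixed-point set $M^{\lie{h}}$ is a closed conformal symplectic submanifold which is \emph{strict} (this is where compactness of $M$ and the nowhere-vanishing of $\theta$ enter, via the computation $\theta|_N=0$ on the normal bundle), so $\Phi(M^{\lie{h}})$ lands in $\lie{l}^*$ for $L=N_K(H)/H$, and the $L$-moment map on the open stratum $\tilde{M}_{\lie{h}}$ is a submersion because the $L$-action there is locally free; its image is therefore open in $\lie{l}^*$ and meets the ray through $\tilde\Phi(\tilde{x}_0)$ in an open set. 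Without some substitute for this openness step your part~\eqref{item;2-not} is incomplete.
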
  

\begin{proof}
\eqref{item;2-not}~The following argument adapts the proof
of~\cite[Theorem~5.15]{BGP18}.  Let $\tilde{x}_0\in\tilde{M}$.
Because the rank of $M$ is $\ge2$, the ray
$\R_{>0}\cdot\tilde{\Phi}(\tilde{x}_0)$ intersects the image
$\tilde{\Phi}(\tilde{M})$ in a dense subset of the ray.  We will argue
that the intersection is an \emph{open} subset of the ray.  Let
$x_0=p(\tilde{x}_0)\in M$.  Let $H$ be the identity component of the
stabilizer $K_{x_0}$ and let
$\lie{h}=\Lie(H)=\Lie(K_{x_0})=\Lie(\tilde{K}_{\tilde{x}_0})$.  Define
\begin{alignat*}{3}
M^{\lie{h}}&=\{\,x\in M\mid\Lie(K_x)\supseteq\lie{h}\,\},&\qquad
M_{\lie{h}}&=\{\,x\in M\mid\Lie(K_x)=\lie{h}\,\},
\\
\tilde{M}^{\lie{h}}&=\{\,\tilde{x}\in\tilde{M}\mid
\Lie(\tilde{K}_{\tilde{x}})\supseteq\lie{h}\,\},&\qquad
\tilde{M}_{\lie{h}}&=\{\,\tilde{x}\in\tilde{M}\mid
\Lie(\tilde{K}_{\tilde{x}})=\lie{h}\,\}.
\end{alignat*}
It is an elementary fact that $M^{\lie{h}}$,
resp.\ $\tilde{M}^{\lie{h}}$, is a closed submanifold of $M$,
resp.\ $\tilde{M}$ (which may have connected components of varying
dimensions), and that $M_{\lie{h}}$, resp.\ $\tilde{M}_{\lie{h}}$, is
an open submanifold of $M^{\lie{h}}$, resp.\ $\tilde{M}^{\lie{h}}$.
Moreover, the submanifold $\tilde{M}^{\lie{h}}$ is symplectic.  It
follows that $M^{\lie{h}}=p(\tilde{M}^{\lie{h}})$ is a closed
conformal symplectic submanifold of $M$ with conformal symplectic
structure $(\omega^{\lie{h}}=\omega|_{M^{\lie{h}}},
\theta^{\lie{h}}=\theta|_{M^{\lie{h}}})$.  If $V$ denotes the tangent
space $T_xM$ at some point $x\in M^{\lie{h}}$, then the tangent space
to $M^{\lie{h}}$ is $T_xM^{\lie{h}}=V^{\lie{h}}$, the subspace of
$\lie{h}$-fixed vectors.  This subspace has a natural complement: we
have $V=V^{\lie{h}}\oplus\lie{h}V$, where $\lie{h}V$ is defined as the
subspace spanned by all vectors $\eta\cdot v=[\eta_M,v]$ with
$\eta\in\lie{h}$ and $v\in V$.  Thus the normal bundle
$N=N_MM^{\lie{h}}$ of $M^{\lie{h}}$ is naturally a subbundle of
$TM|_{M^{\lie{h}}}$.  For $v\in V$ and $\eta\in\lie{h}$ we have
\[
\theta_x(\eta\cdot v)=\omega(A_x,\eta\cdot v)=-\omega(\eta\cdot
A_x,v)=0,
\]
because $\omega$ and the Lee vector field $A$ are $K$-invariant.
Therefore $\theta|_N=0$.  As in~\cite[Corollary~5.10]{BGP18} this
implies that the Lee form $\theta^{\lie{h}}$ is not exact.  For if
$\theta^{\lie{h}}=df$ for some $f\in\ca{C}^\infty(M^{\lie{h}})$, then
$\theta^{\lie{h}}_x=0$ for any $x$ in the compact manifold
$M^{\lie{h}}$ where $f$ attains an extremum.  Hence we would have
$\theta_x=0$, which contradicts the hypothesis on $\theta$.  This
shows that the conformal symplectic manifold $M^{\lie{h}}$ is strict.
Since $\eta_M=0$ on $M^{\lie{h}}$ for all $\eta\in\lie{h}$, it follows
from Proposition~\ref{proposition;strict} that $\Phi^\eta=0$ on
$M^{\lie{h}}$ for all $\eta\in\lie{h}$; in other words
$\Phi(M^{\lie{h}})\subseteq\lie{h}^\circ$.  For $h\in H$ and $x\in
M^{\lie{h}}$ we have $\Phi(x)=\Phi(hx)=\Ad^*_h\Phi(x)$, and therefore
\[
\Phi(M^{\lie{h}})\subseteq\lie{h}^\circ\cap\lie{k}^H.
\]
Let $N_K(H)$ be the normalizer of $H$, $L=N_K(H)/H$, and
$\lie{l}=\Lie(L)$.  The subspace $\lie{h}^\circ\cap\lie{k}^H$ of
$\lie{k}^*$ is naturally isomorphic to $\lie{l}^*$, and $\Phi$
restricts to a moment map for the $L$-action on $M^{\lie{h}}$;
cf.~\cite[\S\,3]{SL91}.  Likewise, $\tilde{\Phi}$ maps
$\tilde{M}^{\lie{h}}$ to $\lie{l}^*$ and the restriction of
$\tilde{\Phi}$ to $\tilde{M}^{\lie{h}}$,
\[
\tilde{\Phi}^{\lie{h}}\colon\tilde{M}^{\lie{h}}\longto\lie{l}^*,
\]
is a moment map for the $L$-action on $\tilde{M}^{\lie{h}}$.  The
$L$-action on the open submanifold $\tilde{M}_{\lie{h}}$ of
$\tilde{M}^{\lie{h}}$ is locally free, so $\tilde{\Phi}^{\lie{h}}$
restricted to $\tilde{M}_{\lie{h}}$ is a submersion.  Since
$\tilde{x}_0\in\tilde{M}_{\lie{h}}$, it follows that the image
$\tilde\Phi(\tilde{M}_{\lie{h}})$ intersects the ray through
$\tilde\Phi(\tilde{x}_0)$ in an open subset.

\eqref{item;2-lee}~Let $\Delta_a(M)$ be the moment body of $M$ with
respect to the moment map $a\Phi$.  It follows from~\eqref{item;2-not}
that
\[
\Delta(\tilde{M})=\R_{>0}\cdot\Delta_a(M)=\R_{>0}\cdot\Delta(\tilde{M}).
\]
Therefore, by~\eqref{equation;cone-project},
$\Delta(M)=\varrho(\Delta(\tilde{M}))=\Delta(\tilde{M})\cap\H_\zeta$.
Hence also $\Delta(\tilde{M})=\R_{>0}\cdot\Delta(M)$.  By
Theorem~\ref{theorem;main}, $\Delta(M)$ is a convex polytope, so
$\Delta(\tilde{M})\cup\{0\}$ is a convex polyhedral cone.
\end{proof}

%%%%%%%%%%%%%%%%%%%%%%%%%%%%%%%%%%%%%%%%%%%%%%%%%%%%%%%%%%%%%%%%%%%%%%%%
\subsection*{The Lee foliation}
%%%%%%%%%%%%%%%%%%%%%%%%%%%%%%%%%%%%%%%%%%%%%%%%%%%%%%%%%%%%%%%%%%%%%%%%

For $x$ and $y\in M$ define $x\sim y$ if there exists a smooth path
$\gamma$ joining $x$ to $y$ and satisfying $\theta(\gamma'(t))=0$ for
all $t$.  The relation $\sim$ is an equivalence relation on $M$, whose
equivalence classes we call the \emph{leaves} of the \emph{Lee
  foliation} $\F_\theta$.  The leaves may be singular at points where
$\theta$ vanishes, but on the open subset
\[U_\theta=\{\,x\in M\mid\theta_x\ne0\,\}\]
the foliation $\F_\theta$ is a regular foliation, whose tangent bundle
is $T\F_\theta=\ker(\theta)$ and whose leaves are immersed
submanifolds of codimension $1$.  For every leaf $L$ the form $\theta$
vanishes on $L\cap U_\theta$, so the $2$-form $\omega_L=\omega|_{L\cap
  U_\theta}$ is closed.  The Lee vector field $A$ satisfies
$\iota(A)\theta=0$, and therefore restricts to a vector field $A_L$ on
$L\cap U_\theta$, which spans the null foliation of $\omega_L$.  Thus
we see that the regular part of each leaf is a presymplectic manifold
of corank $1$.

We assert that a Hamiltonian $K$-action preserves each leaf of the Lee
foliation and that, if the action is of Lee type, the action on each
leaf is leafwise transitive.

\begin{lemma}\label{lemma;lee-orbit}
Let $(M,\omega,\theta)$ be a connected strict conformal symplectic
manifold equipped with a Hamiltonian $K$-action.  Then $K\cdot L=L$
for every leaf $L$ of $\F_\theta$, and the $K$-action on the
presymplectic manifold $L\cap U_\theta$ is Hamiltonian with moment map
$\Phi_L=\Phi|_{L\cap U_\theta}$.  If the action is of Lee type, then
the action on $L\cap U_\theta$ is leafwise transitive relative to the
null foliation of $\omega_L$.
\end{lemma}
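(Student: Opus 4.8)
The plan is to extract everything from the single fact, furnished by Proposition~\ref{proposition;equivariant-moment}, that because the action is Hamiltonian the Lee form $\theta$ is $K$-basic, hence horizontal: $\iota(\xi_M)\theta=0$ for all $\xi\in\lie{k}$. I would first prove leaf preservation. Horizontality says each $\xi_M$ takes values in $\ker(\theta)$, so it is tangent to the Lee foliation everywhere on $M$. Hence the integral curve $t\mapsto\exp_K(t\xi)\cdot x$ is a path along which $\theta$ annihilates the velocity, so it stays in the leaf through $x$. Since $K$ is connected, any $g\in K$ is a product of such exponentials, and concatenating the corresponding curves shows $g\cdot x$ lies in the leaf of $x$; thus $K\cdot L\subseteq L$, with equality because $K$ is a group.

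For the moment map statement, invariance of $\theta$ shows $K$ preserves $U_\theta$, so $K$ acts on $L\cap U_\theta$ and $\Phi_L=\Phi|_{L\cap U_\theta}$ inherits $K$-equivariance from $\Phi$. I would then restrict the defining identity $\iota(\xi_M)\omega=d_\theta\Phi^\xi=d\Phi^\xi+\Phi^\xi\theta$ to $L\cap U_\theta$. There $\theta$ restricts to zero, so the twisting term drops out, and since $\xi_M$ is tangent to the leaf it restricts to a vector field whose contraction with $\omega_L$ equals $d(\Phi_L^\xi)$. This is the presymplectic moment map equation, so the action on $(L\cap U_\theta,\omega_L)$ is Hamiltonian with moment map $\Phi_L$.

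Finally, assume the action is of Lee type and write $\F$ for the null foliation $\F_{\omega_L}$ of $\omega_L$; it is $K$-invariant because $\omega_L$ is, so $K$ acts by foliate transformations and $\lie{n}_\F$ and $N_\F$ make sense. As already observed, $A_L$ spans the rank-one bundle $T\F$, and it is nowhere zero on $U_\theta$ because $\iota(A)\omega=\theta\ne0$ there. The Lee type hypothesis gives $A=\zeta_M$ for some $\zeta\in\lie{k}$, so $A_L=\zeta_L$ is tangent to $\F$; hence $\zeta\in\lie{n}_\F$, and $\zeta_L(p)\in T_p(N_\F\cdot p)$ spans $T_p\F$, giving the inclusion $T_p\F\subseteq T_p(N_\F\cdot p)$. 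The reverse inclusion is automatic, since by definition of $\lie{n}_\F$ every generator $\eta_L$ is tangent to $\F$. Equality of the two is exactly leafwise transitivity. I expect no deep obstacle here: the whole lemma rests on the horizontality of $\theta$ and the Lee type condition, and the only points deserving attention are the nonvanishing of $A$ on $U_\theta$ and the careful bookkeeping of restricting $\iota(\xi_M)\omega=d_\theta\Phi^\xi$ to the leaf.
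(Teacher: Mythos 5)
Your proposal is correct and follows essentially the same route as the paper's proof: leaf preservation from the horizontality of $\theta$ (via Proposition~\ref{proposition;equivariant-moment}), the moment map identity by restricting $\iota(\xi_M)\omega=d_\theta\Phi^\xi$ to the leaf where the twisting term $\Phi^\xi\theta$ vanishes, and leafwise transitivity from the fact that the null foliation of $\omega_L$ is spanned by $A_L=\zeta_L$. Your additional checks (nonvanishing of $A$ on $U_\theta$, equivariance of $\Phi_L$) are correct refinements of the same argument.
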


\begin{proof}
Let $\xi\in\lie{k}$.  From the fact that $\theta$ is basic
(Proposition~\ref{proposition;equivariant-moment}) it follows that
$\theta(\xi_M)=0$.  Therefore $T_x(K\cdot x)$ is tangent to $L$ for
every $x\in L$, and hence $K\cdot x\subseteq L$.  On $M$ we have
$d_\theta\Phi^\xi=\iota(\xi_M)\omega$, so on $L\cap U_\theta$ we have
$d\Phi_L^\xi=\iota(\xi_L)\omega_L$ for all $\xi\in\lie{k}$.  Thus
$\Phi_L$ is a moment map for the $K$-action on $L\cap U_\theta$.  If
$\zeta$ is a Lee element for the action, the null foliation of
$\omega_L$ is spanned by $A_L=\zeta_L$, so the $K$-action on $L\cap
U_\theta$ is leafwise transitive.
\end{proof}

If $\theta$ vanishes nowhere, all the leaves of $\F_\theta$ are
nonsingular, and we can choose a vector field $B\in\X(M)$ satisfying
$\theta(B)=1$.  Such a vector field is unique up to a vector field
tangent to the Lee foliation and its orbits are transversals to the
Lee foliation.  We claim that $\theta$ is $B$-invariant and that $B$
is foliate.

\begin{lemma}\label{lemma;lee-transverse}
Let $(M,\omega,\theta)$ be a connected strict conformal symplectic
manifold.  Suppose the Lee form $\theta$ vanishes nowhere and choose a
vector field $B$ satisfying $\iota(B)\theta=1$.  Then $L(B)\theta=0$
and $B$ is $\F_\theta$-foliate.  Let $\tilde{B}$ be the lift of $B$ to
a presentation $(\tilde{M},\tilde{\omega},\Gamma,\chi,\tilde{f})$.
Then $L(\tilde{B})\tilde{f}=1$.
\end{lemma}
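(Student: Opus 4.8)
The plan is to obtain all three assertions by direct application of Cartan's calculus, exploiting only that $\theta$ is closed and that $B$ normalizes it. First I would prove that $L(B)\theta=0$. Since $\iota(B)\theta=1$ is a constant function, its differential vanishes, and since $\theta$ is closed we also have $\iota(B)\,d\theta=0$; Cartan's magic formula then gives
\[
L(B)\theta=d\,\iota(B)\theta+\iota(B)\,d\theta=0.
\]

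Next I would verify that $B$ is $\F_\theta$-foliate. Because $\theta$ vanishes nowhere, the leaf tangent bundle is $T\F_\theta=\ker(\theta)$ at every point, so a vector field $Y$ lies in $\X(\F_\theta)$ exactly when $\iota(Y)\theta=0$. For such a $Y$ I would use the commutation relation $[L(B),\iota(Y)]=\iota([B,Y])$ to write
\[
\iota([B,Y])\theta=L(B)\bigl(\iota(Y)\theta\bigr)-\iota(Y)\,L(B)\theta.
\]
Here the first term on the right vanishes because $\iota(Y)\theta=0$, and the second vanishes by the first part. Hence $\theta([B,Y])=0$, so $[B,Y]\in\X(\F_\theta)$, which is precisely the condition for $B$ to be foliate.

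Finally I would compute $L(\tilde{B})\tilde{f}$ on a presentation. By the defining properties of the presentation we have $d\tilde{f}=p^*\theta$, and calling $\tilde{B}$ the lift of $B$ means that $\tilde{B}$ is $p$-related to $B$. Since the Lie derivative of a function is the contraction of its differential,
\[
L(\tilde{B})\tilde{f}=\iota(\tilde{B})\,d\tilde{f}=\iota(\tilde{B})\,p^*\theta=p^*\bigl(\iota(B)\theta\bigr)=p^*(1)=1.
\]
The only step that requires any care is the interchange of contraction with pullback in the penultimate equality, $\iota(\tilde{B})\,p^*\theta=p^*\bigl(\iota(B)\theta\bigr)$; this is valid precisely because $\tilde{B}$ projects to $B$ under $p$, and it is the sole point where the notion of a lift is actually used. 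Beyond this, the argument is entirely formal, so I do not anticipate any substantive obstacle.
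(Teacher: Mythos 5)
Your proposal is correct and follows essentially the same route as the paper: Cartan's formula with $d\theta=0$ gives $L(B)\theta=0$, the commutator identity $[L(B),\iota(Y)]=\iota([B,Y])$ gives foliateness, and $d\tilde f=p^*\theta$ together with $p$-relatedness of $\tilde B$ and $B$ gives $L(\tilde B)\tilde f=1$. You merely spell out the pullback-contraction interchange that the paper leaves implicit.
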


\begin{proof}
It follows from $\iota(B)\theta=1$ that
$L(B)\theta=[\iota(B),d]\theta=d\iota(B)\theta=0$.  Let
$X\in\X(\F_\theta)$, i.e.\ $\iota(X)\theta=0$.  Then
\[\iota([B,X])\theta=L(B)\iota(X)\theta-\iota(X)L(B)\theta=0,\]
so $[B,X]\in\X(\F_\theta)$.  Hence $B$ is foliate.  It follows from
$\iota(B)\theta=1$ that $\iota(\tilde{B})d\tilde{f}=1$ and hence that
$L(\tilde{B})\tilde{f}=1$.
\end{proof}

The vector field $B$ in this lemma can be averaged over $K$ to be made
$K$-invariant.  If $B$ is complete, its flow maps leaves of
$\F_\theta$ to leaves of $\F_\theta$ in a $K$-equivariant fashion.
Lifting the flow of $B$ to the minimal presentation $\tilde{M}$ of $M$
induces a diffeomorphism $\tilde{M}\cong\R\times L$, where $L$ is any
leaf of $\F_\theta$, and gives the following version of the Reeb
stability theorem (see e.g.\ \cite[Ch.~2]{MM03}) for the Lee
foliation.

\begin{theorem}\label{theorem;lee-leaf}
Let $(M,\omega,\theta)$ be a connected strict conformal symplectic
manifold equipped with a Hamiltonian $K$-action with moment map
$\Phi$.  Let $(\tilde{M},\tilde\omega,\Gamma,\chi,\tilde{f})$ be the
minimal presentation of $(M,\omega,\theta)$.  Suppose also that the
Lee form $\theta$ vanishes nowhere and that there exists a complete
$K$-invariant vector field $B$ satisfying $\iota(B)\theta=1$.
\begin{enumerate}
\item\label{item;lift-foliation}
Let $\tilde{\F}_\theta$ be the lift of the Lee foliation $\F_\theta$
to $\tilde{M}$.  The leaves of $\tilde{\F}_\theta$ are the level sets
of the potential $\tilde{f}\colon\tilde{M}\to\R$.  For each leaf
$\tilde{L}$ of $\tilde{\F}_\theta$ the covering map
$p\colon\tilde{M}\to M$ restricts to a diffeomorphism $\tilde{L}\to
L$, where $L$ is a leaf of $\F_\theta$.
\item\label{item;foliations}
Let $\tilde{B}$ be the lift of $B$ to $\tilde{M}$ and let
$F\colon\R\times\tilde{M}\to\tilde{M}$ be the flow of $\tilde{B}$.
Let $\tilde{L}_0=\tilde{f}^{-1}(0)$, $L_0=p(\tilde{L}_0)$, and
$q\colon L_0\to\tilde{L}_0$ the inverse of $p\colon\tilde{L}_0\to
L_0$.  Define $F_0\colon\R\times L_0\to\tilde{M}$ by
$F_0(t,x)=F(0,q(x))$.  Equip $\R\times L_0$ with the foliation given
by the fibres of the projection $\pr_1\colon\R\times L_0\to\R$.  Then
$F_0$ is a $K$-equivariant foliate diffeomorphism.  The potential
$\tilde{f}\colon\tilde{M}\to\R$ descends to a continuous map $f\colon
M\cong\tilde{M}/\Gamma\to\R/\Gamma$, whose fibres are the leaves of
$\F_\theta$.  In other words, we have the following commutative
diagram, in which the top row consists of foliated manifolds and
foliate maps and the bottom row consists of the respective leaf
spaces:
\begin{equation}\label{equation;foliations}
\begin{tikzcd}[sep=large]
\R\times L_0\ar[r,"F_0","\cong"']\ar[d,"\pr_1"']&
\tilde{M}\ar[d,"\tilde{f}"']\ar[r,"/\Gamma"]&M\ar[d,"f"]
\\
\R\ar[r,"="]&\R\ar[r,"/\Gamma"]&\R/\Gamma
\end{tikzcd}
\end{equation}
The formula $\gamma*x=p\bigl(F(-\chi(\gamma),\gamma\cdot q(x))\bigr)$
for $\gamma\in\Gamma$ and $x\in L_0$ defines an action of $\Gamma$ on
the leaf $L_0$.  The map $F_0$ is equivariant with respect to the
diagonal $\Gamma$-action, so $M$ is diffeomorphic to the quotient
$(\R\times L_0)/\Gamma$.

\item\label{item;closed-dense}
If $\rank(M)=1$ every leaf of $\F_\theta$ is closed.  If
$\rank(M)\ge2$ every leaf of $\F_\theta$ is dense.
\item\label{item;leaf-convex}
Suppose the $K$-action on $M$ is of Lee type with central Lee element
$\zeta\in\lie{z}(\lie{k})$.  Let $L$ be any leaf of $\F_\theta$ and
let $\Delta(L)=\Phi(L)\cap C$ be the moment body of the presymplectic
Hamiltonian $K$-manifold $L$.  Then $\Delta(L)=\Delta(M)$.  In
particular $\Delta(L)$ is independent of the leaf $L$ and, if $\Phi$
is proper, is a convex polyhedral set.
\end{enumerate}
\end{theorem}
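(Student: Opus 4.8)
The plan is to handle the four assertions in turn, the structural parts~\eqref{item;lift-foliation}--\eqref{item;closed-dense} via the flow of the lifted field $\tilde{B}$ and the convexity part~\eqref{item;leaf-convex} via the scaling relation between $\Phi$ and $\tilde{\Phi}$. For~\eqref{item;lift-foliation} I would begin with $d\tilde{f}=p^*\theta$, so that $\tilde{\F}_\theta$ has tangent distribution $\ker(d\tilde{f})$; as $\theta$, and hence $d\tilde{f}$, never vanishes, $\tilde{f}$ is a submersion and the leaves of $\tilde{\F}_\theta$ are the connected components of its level sets. To see these level sets are connected I would invoke Lemma~\ref{lemma;lee-transverse}: $\tilde{B}$ is complete (it lifts the complete field $B$) and $L(\tilde{B})\tilde{f}=1$, so the flow $F$ satisfies $\tilde{f}(F(t,\tilde{x}))=\tilde{f}(\tilde{x})+t$ and $(t,\tilde{x})\mapsto F(t,\tilde{x})$ is a diffeomorphism from $\R\times\tilde{L}_0$ onto $\tilde{M}$ taking $\{t\}\times\tilde{L}_0$ to $\tilde{f}^{-1}(t)$. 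Connectedness of $\tilde{M}$ forces $\tilde{L}_0$, and hence every $\tilde{f}^{-1}(c)$, to be connected, so each is a single leaf $\tilde{L}$. That $p$ restricts to a diffeomorphism $\tilde{L}\to L$ then follows because $p|_{\tilde{L}}$ is a local diffeomorphism that is surjective onto $L=p(\tilde{L})$ (lift $\theta$-null paths; they stay in a level set since $p^*\theta=d\tilde{f}$) and injective (two points of $\tilde{L}$ in one $p$-fibre differ by $\gamma\in\Gamma$ with $\chi(\gamma)=\tilde{f}(\gamma\tilde{x})-\tilde{f}(\tilde{x})=0$, so $\gamma=e$ as $\chi$ is injective for the minimal presentation).

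For~\eqref{item;foliations}, the map $F_0(t,x)=F(t,q(x))$ is the composite of $\mathrm{id}\times q$ with the flow diffeomorphism above, hence a diffeomorphism; it is foliate since it sends $\{t\}\times L_0$ onto the leaf $\tilde{f}^{-1}(t)$, and $K$-equivariant since $B$, and thus $\tilde{B}$ and $F$, are $K$-invariant while $q$ is $K$-equivariant. The deck transformations leave $\tilde{B}$ invariant, so commute with $F$; together with $\tilde{f}(\gamma\tilde{x})=\tilde{f}(\tilde{x})+\chi(\gamma)$ this shows $\gamma*x=p(F(-\chi(\gamma),\gamma\cdot q(x)))$ lands in $L_0$, defines a $\Gamma$-action, and makes $F_0$ equivariant for the diagonal $\Gamma$-action (with $\gamma$ acting on $\R$ by $t\mapsto t+\chi(\gamma)$). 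Quotienting yields $M\cong(\R\times L_0)/\Gamma$ and the descent of $\tilde{f}$ to $f\colon M\to\R/\Per_\theta$ with leaves as fibres, i.e.\ diagram~\eqref{equation;foliations}. Assertion~\eqref{item;closed-dense} is then read off from $f$: if $\rank(M)=1$ then $\Per_\theta$ is discrete, $\R/\Per_\theta$ is a circle, and each leaf, a fibre of the continuous map $f$ into a Hausdorff space, is closed; if $\rank(M)\ge2$ then $\Per_\theta$ is dense, so $p^{-1}(L)=\tilde{f}^{-1}(c+\Per_\theta)$ is the preimage of a dense set under the open map $\tilde{f}$ and is dense, whence $L$ is dense since $p$ is open and onto.

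For~\eqref{item;leaf-convex} I would use Lemma~\ref{lemma;affine-plane} to get $\Phi^\zeta=1$, whence $\tilde{\Phi}^\zeta=e^{\tilde{f}}p^*\Phi^\zeta=e^{\tilde{f}}$ and $p^*\Phi=e^{-\tilde{f}}\tilde{\Phi}=\varrho\circ\tilde{\Phi}$. Since $\inner{\tilde{\Phi}(\tilde{y}),\zeta}=e^{\tilde{f}(\tilde{y})}$, the image points of height $e^c$ are exactly the images of $\tilde{L}_c=\tilde{f}^{-1}(c)$, so
\[
\tilde{\Phi}(\tilde{L}_c)=\tilde{\Phi}(\tilde{M})\cap\{\,v\mid\inner{v,\zeta}=e^c\,\}
\qquad\text{and}\qquad
\Phi(L)=\varrho\bigl(\tilde{\Phi}(\tilde{L}_c)\bigr).
\]
As $C$ is a cone and $\varrho(v)$ is a positive multiple of $v$, we have $v\in C\iff\varrho(v)\in C$, so $\Delta(L)=\varrho\bigl(\Delta(\tilde{M})\cap\{\inner{\cdot,\zeta}=e^c\}\bigr)$. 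The crux is that $\Delta(\tilde{M})$ is a cone: under the present hypotheses ($\theta$ nowhere zero, $\zeta$ a central Lee element) this is exactly the content of Theorem~\ref{theorem;rank-1}\eqref{item;1-lee}/Theorem~\ref{theorem;rank-2}\eqref{item;2-lee}, which give that $\Delta(\tilde{M})\cup\{0\}$ is a convex polyhedral cone with $\Delta(\tilde{M})\cap\H_\zeta=\Delta(M)$. The height-$e^c$ slice of a cone is $e^c$ times its height-$1$ slice, and $\varrho$ collapses every slice onto the height-$1$ slice, so $\Delta(L)=\Delta(\tilde{M})\cap\H_\zeta=\Delta(M)$ for every leaf; when $\Phi$ is proper this is convex polyhedral by Theorem~\ref{theorem;main}. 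The main obstacle is precisely this scale-invariance of $\Delta(\tilde{M})$: it is a global convexity fact, invisible leafwise, and is what makes the leaf-independence of $\Delta(L)$ nontrivial—the rest is bookkeeping around the $\tilde{B}$-flow and the injectivity of $\chi$.
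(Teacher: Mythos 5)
Your proposal is correct and follows essentially the same route as the paper: parts (i)--(iii) via the flow of $\tilde{B}$ together with $L(\tilde{B})\tilde{f}=1$, the injectivity of $\chi$ for the minimal presentation, and connectedness of $\tilde{M}$; part (iv) via $\tilde{\Phi}^\zeta=e^{\tilde{f}}$ and the scale-invariance of $\Delta(\tilde{M})$ supplied by Theorems~\ref{theorem;rank-1} and~\ref{theorem;rank-2}. The only cosmetic differences are that you phrase the slice comparison in (iv) through the projection $\varrho$ rather than through the presymplectic forms $\tilde{\omega}_t=e^tp^*\omega_t$, and that you (correctly) read $F_0(t,x)=F(t,q(x))$ where the statement has a typo.
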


\begin{proof}
\eqref{item;lift-foliation}--\eqref{item;foliations}~Since
$p^*\theta=d\tilde{f}$, the leaves of $\tilde{\F}_\theta$ are the
connected components of the level surfaces of $\tilde{f}$.  The
formula $\gamma^*f=f+\chi(\gamma)$ and the injectivity of
$\chi\colon\Gamma\to\R$ show that the restriction of $p$ to each level
surface of $\tilde{f}$ is injective.  Thus $p$ maps each leaf of
$\tilde{\F}_\theta$ diffeomorphically to a leaf of $\F_\theta$.  We
must show that the level sets of $\tilde{f}$ are connected.  Since $B$
is complete, so is $\tilde{B}$.  It follows from
$L(\tilde{B})\tilde{f}=1$ (Lemma~\ref{lemma;lee-transverse}) that
\[
\frac{d}{dt}\tilde{f}(F(t,\tilde{x}))=1,\qquad
\tilde{f}(F(0,\tilde{x}))=\tilde{f}(\tilde{x})
\]
for all $\tilde{x}\in\tilde{M}$, whence
$\tilde{f}(F(t,\tilde{x}))=t+\tilde{f}(\tilde{x})$ for all $t\in\R$.
This shows that $F_0$ is a diffeomorphism and that the left square in
the diagram~\eqref{equation;foliations} commutes.  Since $\tilde{B}$
is $\tilde{\F}_\theta$-foliate and $K$-invariant, $F_0$ is
$\tilde{\F}_\theta$-foliate and $K$-equivariant.  By assumption
$\tilde{M}$ is connected, and therefore so is $\tilde{L}_0$ and every
other level set of $\tilde{f}$.  The potential $\tilde{f}$ is
$\Gamma$-equivariant and therefore descends to a continuous $f\colon
M\to\R/\Gamma$, which makes the square on the right
in~\eqref{equation;foliations} commute.  The flow $F$ is
$\Gamma$-equivariant, so it follows from the flow law that
\[
F(\chi(\gamma),\gamma*\tilde{x})=
F(\chi(\gamma),F(-\chi(\gamma),\gamma\cdot\tilde{x}))=\gamma\cdot
F(\chi(\gamma),F(-\chi(\gamma),\tilde{x}))=\gamma\cdot F(0,\tilde{x})
\]
for all $\gamma\in\Gamma$ and $\tilde{x}\in\tilde{L}_0$, which proves
the last two assertions of~\eqref{item;foliations}.

\eqref{item;closed-dense}~The leaves of $\F_\theta$ are the fibres of
$f\colon M\to\R/\Gamma$.  If $\rank(M)=1$ we have
$\R/\Gamma\cong\bb{S}^1$ and $f$ is a smooth locally trivial fibre
bundle, so the fibres are closed.  Now let $\rank(M)\ge2$.  For
$t\in\R$ let $\tilde{L}_t$ be the leaf $\tilde{f}^{-1}(t)$ of
$\tilde{\F}_\theta$ and let $L_t$ be the leaf $p(\tilde{L}_t)$ of
$\F_\theta$.  The inverse image of $L_t$ in $\R\times L_0$ is
\[
(p\circ F_0)^{-1}(L_t)= \Gamma\cdot(\{t\}\times
L_0)=(t+\chi(\Gamma))\times L_0,
\]
which is dense in $\R\times L_0$.  Hence $L$ is dense in $M$.

\eqref{item;leaf-convex}~The leaf $\tilde{L}_t$ is coisotropic in
$\tilde{M}$, hence has presymplectic form
$\tilde{\omega}_t=\tilde{\omega}|_{\tilde{L}_t}$, and $L_t$ has
presymplectic form $\omega_t=\omega|_{L_t}$.  We have
$\tilde{\omega}_t=e^tp^*\omega_t$, so
$\Delta(\tilde{L}_t)=e^t\Delta(L_t)$.  On the other hand
$\Phi^\zeta=1$ on $M$, so $\tilde{\Phi}^\zeta=e^{\tilde{f}}$ on
$\tilde{M}$, so $\tilde{L}_t=(\tilde{\Phi}^\zeta)^{-1}(e^t)$.
Therefore
\[
e^t\Delta(L_t)=\Delta(\tilde{L}_t)=\Delta(\tilde{M})\cap
e^t\H_\zeta=e^t\bigl(\Delta(\tilde{M})\cap\H_\zeta\bigr)=
e^t\Delta(M),
\]
where we used Theorems~\ref{theorem;rank-1} and~\ref{theorem;rank-2}.
Hence $\Delta(L_t)=\Delta(M)$.  The last assertion follows from
Theorem~\ref{theorem;main}.
\end{proof}

\begin{remarks}
\begin{numerate}
\item\label{item;action}
If $\rank(M)=1$ the fact that $M\cong(\R\times L_0)/\Gamma$ shows that
$f\colon M\to\R/\Gamma$ is the fibre bundle associated to the
principal $\Gamma$-bundle $\R\to\R/\Gamma\cong\bb{S}^1$ with fibre the
$\Gamma$-space $L_0$.  The trajectories of $B$ define a flat Ehresmann
connection on the bundle $M$.
\item\label{item;presymplectic-convex}
For closed leaves $L$ the fact that $\Delta(L)$ is convex polyhedral
follows also from the presymplectic convexity theorem of~\cite{LS17}.
\item\label{item;morse-bott}
If the Lee form has zeroes, some of the leaves of $\F_\theta$ may be
singular and part~\eqref{item;foliations} of the theorem may fail.
Most of the other statements are however still true.  Since the
components of the symplectic moment map $\tilde{\Phi}$ are Morse-Bott
functions, the equality $e^{\tilde{f}}=\tilde{\Phi}^\zeta$ implies
that the potential $\tilde{f}$ is also Morse-Bott, so the Lee form
$\theta$ has Morse-Bott singularities, and all its Morse indices are
even.  Work of Imanishi~\cite{Im79} and Farber~\cite[\S\,9.1]{Fa04}
shows that the leaves of $\F_\theta$ are, locally, the level sets of
$\tilde{f}$, and that the closed-dense
dichotomy~\eqref{item;closed-dense} still holds.
Part~\eqref{item;leaf-convex} is still true for the regular leaves and
presumably also for the singular leaves.
\end{numerate}
\end{remarks}

%%%%%%%%%%%%%%%%%%%%%%%%%%%%%%%%%%%%%%%%%%%%%%%%%%%%%%%%%%%%%%%%%%%%%%%%
\section{Convexity for conformal symplectic manifolds of the first kind}
\label{section;kind}
%%%%%%%%%%%%%%%%%%%%%%%%%%%%%%%%%%%%%%%%%%%%%%%%%%%%%%%%%%%%%%%%%%%%%%%%

In this section we explore yet another relationship between conformal
symplectic geometry and presymplectic geometry, which exists only for
conformal symplectic manifolds of the first kind (to be defined
below).  The upshot is a structure theorem,
Theorem~\ref{theorem;structure}, which expresses every such manifold
as a generalized type of contact mapping torus, and a strong version
of the conformal symplectic convexity theorem,
Theorem~\ref{theorem;conformal-first}, which is based on the
presymplectic convexity theorem of~\cite{LS17}.  One advantage of
these results is that they tell us when the moment polyhedron is
semirational.  Another advantage is a geometric explanation for the
stability of the polytopes $\Delta(L)$ for leaves $L$ of the Lee
foliation: in Theorem~\ref{theorem;lee-leaf} the fact that all leaves
have the same moment polytope seems almost accidental, but for
manifolds of the first kind these leaves are actually contact
manifolds and there is a flow that maps them contactomorphically one
onto the other, so that their moment polytopes are manifestly the
same.

These facts rest on Vaisman's observation that on a conformal
symplectic manifold of the first kind there exists a natural
associated presymplectic form.  It turns out that a group action which
is Hamiltonian with respect to the conformal symplectic structure is
also Hamiltonian with respect to the presymplectic structure with the
same moment map.  We will see that the Lee type condition corresponds
to the action being clean with respect to the null foliation of the
presymplectic form.

Let $M$ be a connected manifold.  A \emph{conformal symplectic
  structure of the first kind} on $M$ is a triple $(\omega,\theta,B)$,
where $(\omega,\theta)$ is a conformal symplectic structure and $B$ is
a vector field in $\X_\omega(M)$ such that the function
$\iota(B)\theta$ (which is constant because $L(B)\theta=0$) is equal
to~$1$.  We call such a vector field $B$ an \emph{anti-Lee vector
  field}.  We say a conformal symplectic manifold of the first kind
$(M,\omega,\theta,B)$ is \emph{complete} if its anti-Lee vector field
$B$ is complete.  On a conformal symplectic manifold of the first kind
the Lee form $\theta$ is nowhere $0$, so the Lee foliation $\F_\theta$
is regular.  The following result of Vaisman states that conformal
symplectic structures of the first kind are $d_\theta$-exact,
i.e.\ $\omega=d_\theta\alpha$, and that the form $d\alpha$ is
presymplectic of corank~$2$.

\begin{proposition}[{\cite[Proposition~2.2]{Va85}}]
\label{first-kind}
Let $M$ be a connected $2n$-dimen\-sional manifold.
\begin{enumerate}
\item\label{item;con-pre}
Given a conformal symplectic structure of the first kind
$(\omega,\theta,B)$ on $M$, the $1$-form $\alpha=\iota(B)\omega$
satisfies
\begin{equation}\label{first}
\rank(d\alpha)<2n,\qquad\text{$\theta\wedge\alpha\wedge(d\alpha)^{n-1}$
  is a volume form},
\end{equation}
and $\omega=d_\theta\alpha$.  The Lee vector field $A$ of
$(\omega,\theta)$ commutes with $B$ and is everywhere linearly
independent of $B$.  The form $d\alpha$ is presymplectic with null
foliation $\F_{d\alpha}$ spanned by $A$ and $B$.
\item
Conversely, given a $1$-form $\alpha$ and a closed $1$-form $\theta$
on $M$ satisfying \eqref{first}, the form $\omega=d_\theta\alpha$ is
conformal symplectic of the first kind with Lee form~$\theta$ and
anti-Lee vector field $B$ determined by $\alpha=\iota(B)\omega$.
\end{enumerate}
\end{proposition}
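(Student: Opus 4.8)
The linchpin of the proposition is the pointwise identity $d\alpha=\omega-\theta\wedge\alpha$, and I would derive it first. Since $B\in\X_\omega(M)$ we have $L(B)\omega=0$, and combining Cartan's formula with the structure equation $d\omega=-\theta\wedge\omega$ gives
\[
d\alpha=d\iota(B)\omega=L(B)\omega-\iota(B)d\omega=\iota(B)(\theta\wedge\omega)=(\iota(B)\theta)\,\omega-\theta\wedge\iota(B)\omega=\omega-\theta\wedge\alpha,
\]
where the last step uses $\iota(B)\theta=1$ and $\iota(B)\omega=\alpha$. Adding $\theta\wedge\alpha$ yields $d_\theta\alpha=\omega$ at once. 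The linear independence of $A$ and $B$ is then automatic: the Lee field satisfies $\iota(A)\theta=0$ while $\iota(B)\theta=1$, and both are nowhere zero (as $\theta$ is nowhere zero and $\omega$ is non-degenerate), so they cannot be proportional at any point.

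For the commutation relation I would compute $\iota([A,B])\omega=[L(A),\iota(B)]\omega=L(A)\alpha$ (using $L(A)\omega=0$), and then show $L(A)\alpha=0$ through $L(A)\alpha=\iota(A)\,d\alpha+d\,\iota(A)\alpha$: one checks that $\iota(A)\alpha=\omega(B,A)=-1$ is constant and, from the identity for $d\alpha$, that $\iota(A)\,d\alpha=\theta-\theta=0$. Non-degeneracy of $\omega$ then forces $[A,B]=0$. The same identity gives $\iota(A)\,d\alpha=\iota(B)\,d\alpha=0$, so $A$ and $B$ lie in the radical of $d\alpha$ and $\rank(d\alpha)<2n$. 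To pin down the corank exactly I would use the $\omega$-orthogonal decomposition $T_pM=\operatorname{span}(A,B)\oplus W$, where $W=\ker\theta\cap\ker\alpha$; since $\omega(A,B)=\iota(B)\theta=1$, the plane $\operatorname{span}(A,B)$ is $\omega$-symplectic, and on $W$ the term $\theta\wedge\alpha$ vanishes, so $d\alpha|_W=\omega|_W$ is non-degenerate. Hence the radical of $d\alpha$ is exactly $\operatorname{span}(A,B)$, the rank is the constant $2n-2$, $d\alpha$ is presymplectic, and its null foliation is spanned by $A$ and $B$. For the volume statement, expanding $(d\alpha)^{n-1}=(\omega-\theta\wedge\alpha)^{n-1}$ and using $(\theta\wedge\alpha)^2=0$ collapses everything to $\theta\wedge\alpha\wedge(d\alpha)^{n-1}=\theta\wedge\alpha\wedge\omega^{n-1}=\tfrac1n\,\omega^n$, which is a volume form because $\omega$ is non-degenerate.

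For the converse I would set $\omega=d_\theta\alpha$ and read off $d_\theta\omega=d_\theta^2\alpha=0$ (the closedness of $\theta$ makes $d_\theta^2=0$), which is the conformal symplectic condition $d\omega+\theta\wedge\omega=0$ with Lee form $\theta$. Non-degeneracy comes from the same expansion in reverse: the hypothesis $\rank(d\alpha)<2n$ forces $(d\alpha)^n=0$, so $\omega^n=n\,\theta\wedge\alpha\wedge(d\alpha)^{n-1}$, a volume form by assumption. The remaining task is to produce the anti-Lee field: define $B$ by $\iota(B)\omega=\alpha$ (possible by non-degeneracy) and verify $\iota(B)\theta=1$ and $B\in\X_\omega(M)$. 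The volume hypothesis gives $(d\alpha)^{n-1}\ne0$, hence $\rank(d\alpha)=2n-2$ and a two-dimensional radical $K=\ker(d\alpha)$, and the non-vanishing of $\theta\wedge\alpha\wedge(d\alpha)^{n-1}$ is precisely the statement that $\theta|_K$ and $\alpha|_K$ are linearly independent. For $w\in K$ one has $\iota(w)\omega=\iota(w)(\theta\wedge\alpha)=\theta(w)\,\alpha-\alpha(w)\,\theta$, so the unique $w\in K$ with $\theta(w)=1$, $\alpha(w)=0$ satisfies $\iota(w)\omega=\alpha$; by non-degeneracy this $w$ equals $B$, whence $B\in K$ and $\iota(B)\theta=1$. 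Finally $L(B)\omega=d\alpha+\iota(B)d\omega=d\alpha-\omega+\theta\wedge\alpha=0$ shows $B\in\X_\omega(M)$, so $(\omega,\theta,B)$ is of the first kind.

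The main obstacle is the constancy of the rank of $d\alpha$, equivalently the non-vanishing of $\theta\wedge\alpha\wedge(d\alpha)^{n-1}$: knowing that $A$ and $B$ lie in the radical only gives corank at least $2$, and the reverse inequality requires the symplectic splitting argument. In the converse the same fact is what lets one recover $B$ canonically from $\alpha$, $\theta$, and $\omega$, so it is really the technical heart of both directions.
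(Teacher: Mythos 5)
Your proof is correct and complete: the computation $d\alpha=\iota(B)(\theta\wedge\omega)=\omega-\theta\wedge\alpha$, the $\omega$-orthogonal splitting $T_pM=\operatorname{span}(A,B)\oplus\bigl(\ker\theta\cap\ker\alpha\bigr)$ to pin the radical of $d\alpha$ down to exactly $\operatorname{span}(A,B)$, and the recovery of $B$ in the converse as the unique element of $\ker(d\alpha)$ with $\theta(B)=1$, $\alpha(B)=0$ are all sound, and this is essentially Vaisman's argument. Note that the paper itself gives no proof here --- it cites \cite[Proposition~2.2]{Va85} --- so there is nothing to compare against beyond the original; the only cosmetic remark is that the identity $\theta\wedge\alpha\wedge\omega^{n-1}=\tfrac1n\omega^n$ deserves the one-line justification $\theta\wedge\alpha=\omega_P$ (the component of $\omega$ on the symplectic plane $\operatorname{span}(A,B)$), which is implicit in the splitting you already set up.
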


%%%%%%%%%%%%%%%%%%%%%%%%%%%%%%%%%%%%%%%%%%%%%%%%%%%%%%%%%%%%%%%%%%%%%%%%
\subsection*{Contact mapping tori of higher rank}
%%%%%%%%%%%%%%%%%%%%%%%%%%%%%%%%%%%%%%%%%%%%%%%%%%%%%%%%%%%%%%%%%%%%%%%%

Here is a general method for producing complete conformal symplectic
manifolds of the first kind.  Start with a contact form $\alpha$ on a
manifold $Q$.  The \emph{conformal symplectic cylinder} over $Q$ is
the product $\tilde{M}=\R\times Q$ equipped with the global conformal
symplectic structure $(d\alpha+dt\wedge\alpha,dt)$.  The Lee vector
field of the conformal symplectic cylinder is $\tilde{A}=-R$, where
$R$ is the Reeb vector field of $Q$.  The vector field
$\tilde{B}=\partial/\partial t$ is anti-Lee.  Fix $k$ commuting strict
contactomorphisms, i.e.\ diffeomorphisms $g_i\colon Q\to Q$ satisfying
$g_i^*\alpha=\alpha$ and $g_i\circ g_j=g_j\circ g_i$ for $i$,~$j=1$,
$2,\dots$,~$k$, and $k$ real numbers $c_1$, $c_2,\dots$,~$c_k$.  The
maps $\gamma_i\colon\tilde{M}\to\tilde{M}$ defined by
$\gamma_i(t,x)=(t+c_i,g_i(x))$ are commuting automorphisms of the
conformal symplectic cylinder, which generate an action of
$\Gamma=\Z^k$.  We assume this $\Gamma$-action to be free and proper.
(For $k=1$ this is automatically the case if $c_1\ne0$.  For $k\ge2$
the action is free if $c_1$, $c_2,\dots$,~$c_k$ are independent over
$\Q$, and can be proper only if $Q$ is noncompact.  See
Section~\ref{section;examples} for what to do when the $\Gamma$-action
is not free and proper.)  The \emph{contact mapping torus} of $Q$ with
respect to the $g_i$ and the $c_i$ is the quotient manifold
\[M=M_{(g_1,g_2,\dots,g_k;c_1,c_2,\dots,c_k)}=\tilde{M}/\Gamma\]
equipped with the strict conformal symplectic structure
$(\omega,\theta)$ induced by $(d\alpha+dt\wedge\alpha,dt)$.
Proposition~\ref{first-kind} shows that $(\omega,\theta)$ is of the
first kind.  The anti-Lee vector field $B$ of the contact mapping
torus $M$ is induced by $\tilde{B}$, so $M$ is complete.  The
projection $\tilde{M}=\R\times Q\to\R$ induces a fibration of $M$ over
the quotient $\R/\Gamma$, which is a genuine circle if $k=1$ and a
``stacky circle'' if $k\ge2$.

Let $\tilde{\omega}=e^{\tilde{f}}(d\alpha+dt\wedge\alpha)$, where
$\tilde{f}(t,x)=t$, and let $\chi\colon\Gamma\to\R$ be the inclusion.
Then $(\tilde{M},\tilde{\omega})$ is a symplectic manifold, namely the
symplectization of the contact manifold $Q$, and the tuple
$(\tilde{M},\tilde\omega,\Gamma,\chi,\tilde{f})$ is a presentation of
the contact mapping torus $M$.  If the $c_j$ are independent over
$\Q$, the homomorphism $\chi$ is injective, so the presentation is
minimal and the rank of $M$ is equal to $k$.

The following Boothby-Wang type structure theorem, versions of which
were stated in the rank $1$ case by
Vaisman~\cite[Proposition~3.3]{Va85} and
Bazzoni-Marrero~\cite[Theorem~4.6]{BM18}, says that every connected
complete strict conformal symplectic manifold of the first kind is a
contact mapping torus.

\begin{theorem}\label{theorem;structure}
Let $(M,\omega,\theta,B)$ be a connected complete strict conformal
symplectic manifold of the first kind.  Let
$(\tilde{M},\tilde\omega,\Gamma,\chi,\tilde{f})$ be the minimal
presentation of $(M,\omega,\theta)$.
\begin{enumerate}
\item\label{item;contact}
Let $\tilde{B}=p^*B$ and
$\tilde\alpha=e^{\tilde{f}}p^*\alpha=\iota(\tilde{B})\tilde{\omega}$.
For $t\in\R$ let
\[
\tilde{Q}_t=\tilde{f}^{-1}(t),\quad
\tilde{\alpha}_t=\tilde{\alpha}|_{\tilde{Q}_t},\quad
Q_t=p(\tilde{Q}_t),\quad\alpha_t=\alpha|_{Q_t}.
\]
Then $(\tilde{Q}_t,\tilde{\alpha}_t)$ and $(Q_t,\alpha_t)$ are contact
manifolds for all $t$.  The Reeb vector field on $Q_t$ is equal to
$-A$, where $A$ is the Lee vector field of $M$.  The flow of $B$
induces strict contactomorphisms $Q_0\to Q_t$ for all $t$.
\item\label{item;symplectization}
We have $F_0^*\tilde{\omega}=e^t(d\alpha_0+dt\wedge\alpha_0)$, where
$F_0\colon\R\times Q_0\to\tilde{M}$ is the diffeomorphism of
Theorem~\ref{theorem;lee-leaf}\eqref{item;foliations}.  It follows
that $(\tilde{M},\tilde{\omega})$ is the symplectization of
$(Q_0,\alpha_0)\cong(\tilde{Q}_0,\tilde{\alpha}_0)$.
\item\label{item;mapping-torus}
We have $(p\circ F_0)^*\omega=d\alpha_0+dt\wedge\alpha_0$.  It follows
that $(\tilde{M},p^*\omega)\cong(\R\times
Q_0,d\alpha_0+dt\wedge\alpha_0,dt)$ is the conformal symplectic
cylinder over $Q_0$ and that its quotient by the $\Gamma$-action
$(M,\omega,\theta)$ is a contact mapping torus.
\end{enumerate}
\end{theorem}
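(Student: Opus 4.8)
The plan is to reduce everything to the standard Liouville/symplectization picture on the presentation. The crucial preliminary observation is that, writing $\tilde\alpha=e^{\tilde f}p^*\alpha=\iota(\tilde B)\tilde\omega$ with $\tilde B=p^*B$, one has
\[
d\tilde\alpha=\tilde\omega.
\]
Indeed, from $\omega=d_\theta\alpha=d\alpha+\theta\wedge\alpha$ and $p^*\theta=d\tilde f$ we get $p^*\omega=d(p^*\alpha)+d\tilde f\wedge p^*\alpha$, so $\tilde\omega=e^{\tilde f}p^*\omega=d(e^{\tilde f}p^*\alpha)=d\tilde\alpha$. Thus $(\tilde M,\tilde\omega)$ is exact symplectic with primitive $\tilde\alpha$, and $\tilde B$ is its Liouville vector field: $\iota(\tilde B)\tilde\alpha=0$ and $L(\tilde B)\tilde\alpha=\tilde\alpha$, whence the time-$s$ map $F(s,\cdot)$ of the flow rescales the primitive, $F(s,\cdot)^*\tilde\alpha=e^s\tilde\alpha$. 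Throughout I would apply Theorem~\ref{theorem;lee-leaf} with $K$ trivial --- its hypotheses hold because $\theta$ is nowhere zero (first kind) and $B$ is complete with $\iota(B)\theta=1$ --- so that $Q_0=L_0$ and $F_0$ is the foliate diffeomorphism provided there. Note also that $d\tilde f=p^*\theta$ is nowhere zero, so every value of $\tilde f$ is regular and each $\tilde Q_t$ is a smooth hypersurface.

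For part~\eqref{item;contact}, since $L(\tilde B)\tilde f=1$ the Liouville field $\tilde B$ is transverse to each level set $\tilde Q_t$. Contracting the volume form $\tilde\omega^n$ with $\tilde B$ gives $\iota(\tilde B)\tilde\omega^n=n\,\tilde\alpha\wedge\tilde\omega^{n-1}$, which restricts to a volume form on $\tilde Q_t$; using $\tilde\omega|_{\tilde Q_t}=d\tilde\alpha_t$ this says $\tilde\alpha_t\wedge(d\tilde\alpha_t)^{n-1}\neq0$, i.e.\ $\tilde\alpha_t$ is contact. Because $\tilde\alpha|_{\tilde Q_t}=e^t\,p^*\alpha_t$ and $p|_{\tilde Q_t}$ is a diffeomorphism (Theorem~\ref{theorem;lee-leaf}\eqref{item;lift-foliation}), $\alpha_t$ is contact as well. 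The Reeb field is identified by a direct computation on $M$: from $\alpha=\iota(B)\omega$ and $\iota(A)\omega=\theta$ one finds $\alpha(A)=\omega(B,A)=-\theta(B)=-1$, and from $d\alpha=\omega-\theta\wedge\alpha$ together with $\iota(A)\theta=0$ one finds $\iota(A)d\alpha=0$; since $A$ is tangent to the leaves, $-A$ restricts to the Reeb field of $(Q_t,\alpha_t)$. Finally, $F(t,\cdot)\colon\tilde Q_0\to\tilde Q_t$ satisfies $F(t,\cdot)^*\tilde\alpha_t=e^t\tilde\alpha_0$ by the Liouville scaling; feeding in $\tilde\alpha_t=e^tp^*\alpha_t$ and $\tilde\alpha_0=p^*\alpha_0$ and cancelling the factor $e^t$ shows that the induced map $Q_0\to Q_t$ pulls $\alpha_t$ back to $\alpha_0$, i.e.\ is a strict contactomorphism.

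For parts~\eqref{item;symplectization} and~\eqref{item;mapping-torus} I would simply pull the primitive back along $F_0(t,x)=F(t,q(x))$. Splitting $T(\R\times Q_0)$ into the $\partial/\partial t$ direction and $TQ_0$, using $\iota(\tilde B)\tilde\alpha=0$ on the first and $F(t,\cdot)^*\tilde\alpha=e^t\tilde\alpha$ together with $q^*\tilde\alpha_0=\alpha_0$ on the second, yields
\[
F_0^*\tilde\alpha=e^t\alpha_0,\qquad\text{hence}\qquad
F_0^*\tilde\omega=d(e^t\alpha_0)=e^t\bigl(d\alpha_0+dt\wedge\alpha_0\bigr),
\]
which is precisely the symplectization of $(Q_0,\alpha_0)$; this proves~\eqref{item;symplectization}. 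Since $F_0^*\tilde f=t$ and $p^*\omega=e^{-\tilde f}\tilde\omega$, we get $(p\circ F_0)^*\omega=e^{-t}F_0^*\tilde\omega=d\alpha_0+dt\wedge\alpha_0$ and $(p\circ F_0)^*\theta=d(F_0^*\tilde f)=dt$, exhibiting $(\tilde M,p^*\omega,p^*\theta)$ as the conformal symplectic cylinder over $Q_0$. To conclude that the quotient is a contact mapping torus, I would transport the deck action along $F_0$: by Theorem~\ref{theorem;lee-leaf}\eqref{item;foliations} it becomes $\gamma\cdot(t,x)=(t+\chi(\gamma),\gamma*x)$, and since $\tilde B$ is $\gamma$-invariant (deck transformations commute with $p$, hence with the lifted flow) one has $\gamma^*\tilde\alpha=e^{\chi(\gamma)}\tilde\alpha$; combined with $F(-\chi(\gamma),\cdot)^*\tilde\alpha=e^{-\chi(\gamma)}\tilde\alpha$ the two conformal factors cancel, so each $\gamma*$ is a strict contactomorphism of $(Q_0,\alpha_0)$. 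Choosing generators of $\Gamma\cong\Z^k$ then presents $M$ as the contact mapping torus $M_{(g_1,\dots,g_k;c_1,\dots,c_k)}$ with $g_i=\gamma_i*$ and $c_i=\chi(\gamma_i)$.

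The main obstacle is not any single computation but the careful bookkeeping of the conformal factors $e^{\tilde f}$, $e^t$ and $e^{\chi(\gamma)}$: the Liouville flow and the deck transformations each act only \emph{conformally} on $\tilde\alpha$, and the content of the theorem is that these factors conspire to cancel so that the resulting maps between the leaves $(Q_t,\alpha_t)$ are \emph{strict} rather than merely conformal contactomorphisms. A secondary point to handle with care is the legitimacy of invoking Theorem~\ref{theorem;lee-leaf} with trivial $K$ and the identification $Q_0=L_0$, together with the observation that $0$ is a regular value of $\tilde f$ (in fact every value is, since $d\tilde f=p^*\theta$ is nowhere zero) so that the level sets are genuine smooth contact hypersurfaces.
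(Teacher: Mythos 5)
Your proof is correct and covers every assertion of the theorem, but it is organized around a different pivot than the paper's. You lift everything to the minimal presentation and exploit the identity $d\tilde\alpha=\tilde\omega$, which makes $\tilde{B}$ a Liouville vector field for the exact symplectic form $\tilde\omega$: contactness of the slices then follows from the standard contact-type-hypersurface argument ($\iota(\tilde{B})\tilde\omega^n$ restricts to a volume form on each level set of $\tilde{f}$), and strictness of the induced contactomorphisms follows from cancelling the conformal factors $e^t$ and $e^{\chi(\gamma)}$ against the Liouville scaling $F(s,\cdot)^*\tilde\alpha=e^s\tilde\alpha$. The paper instead works entirely downstairs on $M$ with the twisted calculus: it obtains contactness from the pointwise identity $\alpha\wedge(d\alpha)^{n-1}=\alpha\wedge\omega^{n-1}$ combined with the volume-form condition \eqref{first}, and strictness from the on-the-nose invariance $L(B)\alpha=0$, which holds because $B\in\X_\omega(M)$ for a structure of the first kind, so no conformal factors ever need to be tracked. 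The two routes are equivalent --- your relation $F(s,\cdot)^*\tilde\alpha=e^s\tilde\alpha$ is precisely the upstairs shadow of $L(B)\alpha=0$ --- but the paper's is slightly leaner, while yours makes the identification of $(\tilde{M},\tilde\omega)$ with a symplectization completely transparent and supplies a detail the paper leaves implicit, namely that the transported deck action $\gamma*$ of Theorem~\ref{theorem;lee-leaf}\eqref{item;foliations} acts by \emph{strict} contactomorphisms of $(Q_0,\alpha_0)$, which is exactly what is needed to recognize $M$ as a contact mapping torus.
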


\begin{proof}
\eqref{item;contact}~Let $\dim(M)=2n$.  It follows from
Proposition~\ref{first-kind} that
\[
\alpha\wedge(d\alpha)^{n-1}=
\alpha\wedge(\omega-\theta\wedge\alpha)^{n-1}=\alpha\wedge\omega^{n-1}.
\]
Now $\theta\wedge\alpha\wedge\omega^{n-1}$ is a volume form on $M$, so
$\alpha\wedge(d\alpha)^{n-1}$ restricts to a volume form on $Q_t$, so
$\alpha_t$ is a contact form.  We have
$\tilde{\alpha}_t=e^tp^*\alpha_t$, so $\tilde{\alpha}_t$ is contact as
well.  Also
\begin{align*}
\iota(A)\alpha&=-\iota(B)\iota(A)\omega=-\iota(B)\theta=-1,\\
\iota(A)d\alpha&=\iota(A)(\omega-\theta\wedge\alpha)=
\theta+(\iota(A)\alpha)\theta=0,
\end{align*}
so $-A$ is the Reeb vector field of $\alpha$.
Theorem~\ref{theorem;lee-leaf}\eqref{item;foliations} says that the
time $t$ flow of $B$ maps $Q_0$ to $Q_t$.  From $L(B)\omega=0$ we get
$L(B)\alpha=L(B)\iota(B)\omega=-\iota(B)L(B)\omega=0$, so the flow of
$B$ preserves $\alpha$.

\eqref{item;symplectization}--\eqref{item;mapping-torus}~Using
$\omega=d_\theta\omega$ (Proposition~\ref{first-kind}),
$F_0^*p^*\alpha=\alpha_0$, and $F_0^*\tilde{f}=t$ gives
\[
F_0^*p^*\omega=F_0^*p^*(d\alpha+\theta\wedge\alpha)=
d\alpha_0+dt\wedge\alpha_0.
\]
Hence also $F_0^*\tilde{\omega}=F_0^*(e^{\tilde{f}}p^*\omega)=
e^t(d\alpha_0+dt\wedge\alpha_0)$.
\end{proof}

In the presence of a Hamiltonian action we have the following
additional information.

\begin{proposition}\label{first-kind-ham}
Let $(M,\omega,\theta,B)$ be a connected strict conformal symplectic
manifold of the first kind which is equipped with a Hamiltonian
$K$-action with moment map $\Phi\colon M\to\lie{k}^*$.
\begin{enumerate}
\item\label{item;invariant}
We may assume the anti-Lee vector field $B$ to be $K$-invariant.  For
such a choice of $B$ we have $L(B)\Phi^\xi=0$ for all $\xi\in\lie{k}$.
\item\label{first-pre}
Suppose $B$ is $K$-invariant and let $\alpha=\iota(B)\omega$.  The
$K$-action is Hamiltonian with respect to the presymplectic form
$d\alpha$ and with the same moment map $\Phi$.
\item\label{lee-clean}
Suppose the $K$-action on $(M,\omega,\theta)$ is of Lee type and let
$\zeta\in\lie{z}(\lie{k})$ be a Lee element.  Then the $K$-action on
$(M,\F_{d\alpha})$ is clean with null ideal $\R\zeta\oplus\lie{n}$,
where $\lie{n}=\ker(\lie{k}\to\X(M))$ is the kernel of the
infinitesimal action.
\item\label{item;cone-cylinder}
Suppose $B$ is $K$-invariant and complete.  Let
$(\tilde{M},\tilde\omega,\Gamma,\chi,\tilde{f})$ be the minimal
presentation of $(M,\omega,\theta)$, and let $Q_0$ and
$F_0\colon\R\times Q_0\to\tilde{M}$ be as in
Theorem~\ref{theorem;structure}.  Let $\Psi\colon Q_0\to\lie{k}^*$ be
the contact moment map for the $K$-action on $Q_0$ defined by
$\Psi^\xi=-\iota(\xi_{Q_0})\alpha_0$ for $\xi\in\lie{k}$.  Then the
moment map for the $K$-action on the conformal symplectic cylinder
$(\tilde{M},p^*\omega)\cong(\R\times
Q_0,d\alpha_0+dt\wedge\alpha_0,dt)$ is given by
\[(p\circ F_0)^*\Phi(t,x)=\Psi(x),\]
and the moment map for the $K$-action on the symplectic cone
$(\tilde{M},\tilde{\omega})\cong\bigl(\R\times
Q_0,e^t(d\alpha_0+dt\wedge\alpha_0)\bigr)$ is given by
\[F_0^*\tilde{\Phi}(t,x)=e^t\Psi(x).\]
\end{enumerate}
\end{proposition}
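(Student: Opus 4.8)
The plan is to reduce all four parts to the single identity $\iota(\xi_M)\alpha=-\Phi^\xi$, after which each assertion follows from the Cartan calculus~\eqref{equation;cartan} and the structure theorem, Theorem~\ref{theorem;structure}. For~\eqref{item;invariant} I would first observe that, since $\omega$ and $\theta$ are $K$-invariant (Proposition~\ref{proposition;equivariant-moment}), the anti-Lee condition ($L(B)\omega=0$ and $\iota(B)\theta=1$) is preserved by every $K$-translate of $B$, so averaging over $K$ produces a $K$-invariant anti-Lee field; replace $B$ by it. Invariance gives $[\xi_M,B]=0$, hence the flow $\phi_t$ of $B$ preserves $\omega$ and $\theta$ and satisfies $\phi_t^*\xi_M=\xi_M$. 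Therefore $d_\theta(\phi_t^*\Phi^\xi)=\phi_t^*\iota(\xi_M)\omega=\iota(\xi_M)\omega=d_\theta\Phi^\xi$, and the injectivity of $d_\theta$ (Proposition~\ref{proposition;strict}) forces $\phi_t^*\Phi^\xi=\Phi^\xi$, i.e.\ $L(B)\Phi^\xi=0$. Substituting this into
\[
\iota(\xi_M)\alpha=\iota(\xi_M)\iota(B)\omega=-\iota(B)d_\theta\Phi^\xi=-L(B)\Phi^\xi-\Phi^\xi
\]
gives the key identity $\iota(\xi_M)\alpha=-\Phi^\xi$. This uniqueness argument is the main obstacle, since it is what couples the conformal symplectic and presymplectic data, and it relies essentially on strictness.

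Part~\eqref{first-pre} is then immediate. Writing $d\alpha=\omega-\theta\wedge\alpha$ (from $\omega=d_\theta\alpha$, Proposition~\ref{first-kind}) and using $\theta(\xi_M)=0$ together with the key identity to evaluate $\iota(\xi_M)(\theta\wedge\alpha)=\Phi^\xi\theta$, I obtain
\[
\iota(\xi_M)d\alpha=\iota(\xi_M)\omega-\Phi^\xi\theta=d_\theta\Phi^\xi-\Phi^\xi\theta=d\Phi^\xi,
\]
so the already $K$-equivariant map $\Phi$ is a moment map for the presymplectic form $d\alpha$.

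For~\eqref{lee-clean}, recall that $\F_{d\alpha}$ is spanned by $A$ and $B$ (Proposition~\ref{first-kind}) and that $\theta(A)=0$, $\theta(B)=1$. If $\xi_M$ is tangent to $\F_{d\alpha}$, then writing $\xi_M=aA+bB$ and pairing with the basic form $\theta$ forces $b=0$, so $\xi_M=aA$; the key identity gives $a=\Phi^\xi$, while comparing $\iota(\xi_M)\omega=a\theta$ with $d_\theta\Phi^\xi$ forces $d\Phi^\xi=0$, so $a$ is a constant $c$ and $\xi_M=(c\zeta)_M$. Hence the null ideal is exactly $\R\zeta\oplus\lie{n}$, the sum being direct because $A$ is nowhere zero. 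Both $T_p(N_\F\cdot p)$ and $T_p(K\cdot p)\cap T_p\F$ then equal $\R A_p$ (again because $\theta$ pairs to $0$ with every fundamental vector field but to $1$ with $B$), which is exactly cleanness. The geometric content here is that the $B$-direction of the null foliation is never reached by the infinitesimal action.

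Finally, for~\eqref{item;cone-cylinder} I would exploit $L(B)\Phi^\xi=0$: since $p\circ F(t,\cdot)$ is the time-$t$ flow of $B$ on $M$ and $\Phi^\xi$ is flow-invariant, $(p\circ F_0)^*\Phi^\xi(t,x)=\Phi^\xi(x)$ is independent of $t$. Because $K$ preserves the leaf $Q_0$ (Lemma~\ref{lemma;lee-orbit}), $\xi_M$ restricts to $\xi_{Q_0}$, so the key identity restricted to $Q_0$ yields $\Psi^\xi=-\iota(\xi_{Q_0})\alpha_0=\Phi^\xi|_{Q_0}$; hence $(p\circ F_0)^*\Phi(t,x)=\Psi(x)$. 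The cone formula then follows from $\tilde\Phi=e^{\tilde f}p^*\Phi$ together with $\tilde f\bigl(F_0(t,x)\bigr)=t$ (Theorem~\ref{theorem;lee-leaf}), which gives $F_0^*\tilde\Phi(t,x)=e^t\Psi(x)$.
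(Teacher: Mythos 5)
Your proposal is correct and follows essentially the same route as the paper's proof: average $B$ over $K$, use the injectivity of $d_\theta$ (Proposition~\ref{proposition;strict}) to obtain $L(B)\Phi^\xi=0$ and the identity $\iota(\xi_M)\alpha=-\Phi^\xi$, and deduce the remaining parts from the Cartan calculus, the description of $\F_{d\alpha}$ in Proposition~\ref{first-kind}, and Theorem~\ref{theorem;structure}. Two remarks. First, in~\eqref{item;invariant} you argue through the global flow $\phi_t$ of $B$, but $B$ is not assumed complete in that part; for incomplete $B$ the domain of $\phi_t$ may be disconnected, so Proposition~\ref{proposition;strict} cannot be applied to $\phi_t^*\Phi^\xi-\Phi^\xi$ there. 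The fix is immediate: differentiate your identity $d_\theta(\phi_t^*\Phi^\xi)=d_\theta\Phi^\xi$ at $t=0$ to get $d_\theta\bigl(L_\theta(B)\Phi^\xi-\Phi^\xi\bigr)=0$ on all of $M$ and then invoke injectivity --- which is exactly the paper's infinitesimal computation. Second, in~\eqref{lee-clean} your observation that pairing $\xi_M=\lambda\zeta_M$ with $\alpha$ gives $\lambda=\Phi^\xi$, and that comparing $\iota(\xi_M)\omega=\lambda\theta$ with $d_\theta\Phi^\xi$ then forces $d\lambda=0$, is a genuine improvement: the paper passes from $\xi_M=\lambda\zeta_M$ (with $\lambda$ a priori a function) to $\xi\in\R\zeta\oplus\lie{n}$ without justifying the constancy of $\lambda$. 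Beyond that the only difference is organizational: you establish $\iota(\xi_M)\alpha=-\Phi^\xi$ once and reuse it throughout, whereas the paper rederives it separately in~\eqref{first-pre} (via $L(\xi_M)\alpha=0$) and in~\eqref{item;cone-cylinder} (via $L_\theta(B)\Phi^\xi=\Phi^\xi$).
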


\begin{proof}
\eqref{item;invariant}~Let $\bar{B}=\int_Kg_*B\,dg$.  Using the
$K$-invariance of $\omega$ and $\theta$
(Proposition~\ref{proposition;equivariant-moment}) we get
\[
\iota(\bar{B})\theta=\int_K\iota(g_*B)\theta\,dg
=\int_Kg^*(\iota(B)\theta)\,dg=\int_Kg^*(1)\,dg=1,
\]
and similarly $L(\bar{B})\omega=0$, so $\bar{B}$ is anti-Lee and
$K$-invariant.  Let $\xi\in\lie{k}$ and suppose $B$ is $K$-invariant.
Then $[\xi_M,B]=0$, so
$L_\theta(B)\iota(\xi_M)=\iota(\xi_M)L_\theta(B)$
by~\eqref{equation;cartan}.  Using
$d_\theta\Phi^\xi=\iota(\xi_M)\omega$ yields
\begin{align*}
d_\theta L_\theta(B)\Phi^\xi&=L_\theta(B) d_\theta\Phi^\xi=
L_\theta(B)\iota(\xi_M)\omega=\iota(\xi_M)L_\theta(B)\omega\\
&=\iota(\xi_M)(L(B)\omega+\theta(B)\omega)=\iota(\xi_M)\omega=
d_\theta\Phi^\xi,
\end{align*}
so $L_\theta(B)\Phi^\xi=\Phi^\xi$ by injectivity of $d_\theta$
(Proposition~\ref{proposition;strict}).  Hence $L(B)\Phi^\xi=0$.

\eqref{first-pre}~We must show that $d\Phi^\xi=\iota(\xi_M)d\alpha$
for all $\xi\in\lie{k}$.
Proposition~\ref{first-kind}\eqref{item;con-pre} gives
$\omega=d_\theta\alpha$.  Using $d_\theta\Phi^\xi=\iota(\xi_M)\omega$
and $L_\theta(\xi_M)=[d_\theta,\iota(\xi_M)]$
(see~\eqref{equation;cartan}) we get
\[
d_\theta\Phi^\xi=\iota(\xi_M)d_\theta\alpha=
L_\theta(\xi_M)\alpha-d_\theta\iota(\xi_M)\alpha=
L(\xi_M)\alpha+\theta(\xi_M)\cdot\alpha-d_\theta\iota(\xi_M)\alpha.
\]
Since $B$ and $\omega$ are invariant, $\alpha=\iota(B)\omega$ is
invariant, so $L(\xi_M)\alpha=0$.  Since $\iota(\xi_M)\theta=0$
(Proposition~\ref{proposition;equivariant-moment}) and
$L(\xi_M)\alpha=0$, this gives
$d_\theta\Phi^\xi=-d_\theta(\iota(\xi_M)\alpha)$.  By
Proposition~\ref{proposition;strict},
$d_\theta\colon\ca{C}^{\infty}(M)\to\Omega^1(M)$ is injective, so
$\Phi^\xi=-\iota(\xi_M)\alpha$ and thus
$d\Phi^\xi=-d\iota(\xi_M)\alpha=\iota(\xi_M)d\alpha$.

\eqref{lee-clean}~Let $\xi\in\lie{k}$ and suppose $\xi_M$ is tangent
to the null foliation $\F_{d\alpha}$, which according to
Proposition~\ref{first-kind} is spanned by $A=\zeta_M$ and $B$.  Then
$\xi_M=\lambda\zeta_M+\mu B$ for some $\lambda$ and $\mu$.  Using
$\iota(\xi_M)\theta=\iota(\zeta_M)\theta=0$ and $\iota(B)\theta=1$ we
obtain $\xi_M=\lambda\zeta_M$, so $\xi\in\R\zeta\oplus\lie{n}$.  This
shows that the null ideal is equal to $\R\zeta\oplus\lie{n}$ and that
\[T_m(K\cdot m)\cap T_m\F=\R\zeta_M(m)=T_m(N\cdot m)\]
for all $m\in M$.

\eqref{item;cone-cylinder}~Let $\xi\in\lie{k}$.  Since $\alpha_0$ and
$\xi_{Q_0}$ are $\Gamma$-invariant, the function $\Psi^\xi$ is
$\Gamma$-invariant and therefore descends to a function $\psi^\xi$ on
$M$.  It follows from~\eqref{item;invariant} that
\[
\psi^\xi=-\iota(\xi_M)\alpha=-\iota(\xi_M)\iota(B)\omega=
\iota(B)\iota(\xi_M)\omega=\iota(B)d_\theta\Phi^\xi=L_\theta(B)\Phi^\xi=
\Phi^\xi,
\]
which proves the first statement.  The second statement follows
immediately from the first.
\end{proof}

Combining Theorem~\ref{theorem;structure} and
Proposition~\ref{first-kind-ham} with the presymplectic convexity
theorem~\cite[Theorem 2.2]{LS17} yields the following result.

\begin{theorem}\label{theorem;conformal-first}
Let $(M,\omega,\theta,B)$ be a connected strict conformal symplectic
manifold of the first kind equipped with a Hamiltonian $K$-action with
moment map $\Phi\colon M\to\lie{k}^*$ and with an anti-Lee vector
field $B$ that is $K$-invariant and complete.  Let $T$ be a maximal
torus of $K$ and $\lie{t}=\Lie(T)$.  Choose a closed Weyl chamber $C$
in $\lie{t}^*$ and define $\Delta(M)=\Phi(M)\cap C$.  Suppose the
$K$-action is of Lee type with central Lee element $\zeta\in\lie{k}$.
\begin{enumerate}
\item
All leaves $Q$ of the Lee foliation $\F_\theta$ of $M$ are
equivariantly contactomorphic to one another.  We have
$\Delta(Q)=\Delta(M)$ for every $Q$, and
$\Delta(\tilde{M})=\R_{>0}\cdot\Delta(M)$ and
$\Delta(\tilde{M})=\Delta(M)\cap\H_\zeta$ for every presentation
$\tilde{M}$ of $M$.
\item
Suppose $\Phi$ is proper.  Then $\Phi\colon M\to\Phi(M)$ is an open
map with connected fibres; $\Delta(M)$ is a closed convex polyhedral
set; $\Delta(\tilde{M})\cup\{0\}$ is a convex polyhedral cone; and
$\Delta(M)$ is semirational if and only if the line $\R\zeta$ in
$\lie{t}$ is rational.
\end{enumerate}
\end{theorem}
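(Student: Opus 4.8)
The plan is to exploit the three faces of a first-kind manifold. By Proposition~\ref{first-kind}\eqref{item;con-pre} the $1$-form $\alpha=\iota(B)\omega$ has presymplectic exterior derivative $d\alpha$, by Proposition~\ref{first-kind-ham}\eqref{first-pre} the $K$-action is Hamiltonian for $d\alpha$ with the \emph{same} moment map $\Phi$, and by Theorem~\ref{theorem;structure} the leaves of $\F_\theta$ are contact manifolds. I would obtain the convexity assertions of part~(ii) by feeding $(M,d\alpha)$ into the presymplectic convexity theorem \cite[Theorem~2.2]{LS17}, the scaling relations between $\Delta(M)$ and $\Delta(\tilde M)$ from the explicit cone moment map of Proposition~\ref{first-kind-ham}\eqref{item;cone-cylinder}, and the leafwise statements from the structure theorem.

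For part~(i), every leaf of $\F_\theta$ is one of the contact manifolds $Q_t=p(\tilde{f}^{-1}(t))$ of Theorem~\ref{theorem;structure}, and the time-$t$ flow of $B$ carries $Q_0$ onto $Q_t$ by a strict contactomorphism. Since $B$ is $K$-invariant its flow is $K$-equivariant, so these contactomorphisms are equivariant and all leaves are equivariantly contactomorphic. A strict contactomorphism preserves $\alpha_0$, hence the contact moment map $\Psi^\xi=-\iota(\xi_{Q_0})\alpha_0$, so $\Delta(Q)$ is the same for every leaf $Q$; by Proposition~\ref{first-kind-ham}\eqref{item;cone-cylinder} this $\Psi$ is the restriction of $\Phi$, so $\Delta(Q)=\Phi(Q)\cap C$, and Theorem~\ref{theorem;lee-leaf}\eqref{item;leaf-convex} identifies this with $\Delta(M)$. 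The same proposition gives $F_0^*\tilde\Phi(t,x)=e^t\Psi(x)$, whence $\tilde\Phi(\tilde M)=\R_{>0}\cdot\Psi(Q_0)$; applying the $\R_{>0}$-equivariant quotient map $q$ yields $\Delta(\tilde M)=\R_{>0}\cdot\Delta(Q_0)=\R_{>0}\cdot\Delta(M)$. Finally $\Delta(M)\subseteq\H_\zeta$ by Lemma~\ref{lemma;affine-plane}, i.e.\ $\inner{v,\zeta}=1$ on $\Delta(M)$, so intersecting the cone $\R_{>0}\cdot\Delta(M)$ with $\H_\zeta$ returns precisely its height-one slice, giving $\Delta(\tilde M)\cap\H_\zeta=\Delta(M)$.

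For part~(ii) the key inputs are Proposition~\ref{first-kind-ham}\eqref{first-pre} and~\eqref{lee-clean}: the $K$-action is Hamiltonian for $d\alpha$ with proper moment map $\Phi$, and it is clean with null ideal $\R\zeta\oplus\lie{n}$, where $\lie{n}=\ker(\lie{k}\to\X(M))$. Feeding this into \cite[Theorem~2.2]{LS17} shows at once that $\Phi\colon M\to\Phi(M)$ is open with connected fibres and that $\Delta(M)$ is a closed convex polyhedral set. (Alternatively these follow from Theorem~\ref{theorem;main}.) The cone statement is then immediate from part~(i): $\Delta(\tilde M)\cup\{0\}=(\R_{>0}\cdot\Delta(M))\cup\{0\}$ is the cone with apex $0$ over the polyhedral set $\Delta(M)$, hence a convex polyhedral cone. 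For the semirationality criterion I would invoke the semirationality clause of \cite[Theorem~2.2]{LS17}, which ties semirationality of the presymplectic moment body to rationality of the null ideal. Since $\lie{n}$ is the Lie algebra of the closed normal subgroup of $K$ acting trivially on $M$, it is rational, so $\R\zeta\oplus\lie{n}$ is rational exactly when the line $\R\zeta$ is, which is the asserted criterion.

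I expect the semirationality statement to be the only delicate point; everything else is bookkeeping once Theorems~\ref{theorem;structure} and~\ref{theorem;lee-leaf} and Proposition~\ref{first-kind-ham} are in hand. The difficulty is to match the hypotheses and conclusions of \cite[Theorem~2.2]{LS17} to the present setting, verifying that its semirationality clause is governed precisely by rationality of the null ideal, and then that rationality of $\R\zeta\oplus\lie{n}$ reduces to that of $\R\zeta$. For the forward implication this amounts to checking that the facet normals of $\Delta(M)$ inside $\H_\zeta$ are rational and that $\H_\zeta$ itself is a rational hyperplane, which is where the hypothesis on $\R\zeta$ enters; for the converse one uses that if $\Delta(M)$ is semirational and full-dimensional in $\H_\zeta$, then its supporting hyperplanes force $\H_\zeta$, and hence $\zeta$, to be rational.
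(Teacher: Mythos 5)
Your proposal is correct and follows exactly the route the paper takes: the paper's entire proof of this theorem is the single sentence that it follows by combining Theorem~\ref{theorem;structure} and Proposition~\ref{first-kind-ham} with the presymplectic convexity theorem of~\cite{LS17}, and your argument is a faithful (and considerably more detailed) unpacking of precisely that combination. The one caveat concerns the semirationality criterion, which you rightly single out as the delicate point: when the infinitesimal action has nonzero kernel $\lie{n}$ the Lee element $\zeta$ is determined only modulo $\lie{n}$, so rationality of $\R\zeta\oplus\lie{n}$ does not literally reduce to rationality of $\R\zeta$ (and the converse direction needs $\Delta(M)$ to affinely span $\H_\zeta$) --- but these imprecisions are already present in the statement of the theorem itself and are not introduced by your argument.
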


%%%%%%%%%%%%%%%%%%%%%%%%%%%%%%%%%%%%%%%%%%%%%%%%%%%%%%%%%%%%%%%%%%%%%%%%
\section{Examples}\label{section;examples}
%%%%%%%%%%%%%%%%%%%%%%%%%%%%%%%%%%%%%%%%%%%%%%%%%%%%%%%%%%%%%%%%%%%%%%%%

%%%%%%%%%%%%%%%%%%%%%%%%%%%%%%%%%%%%%%%%%%%%%%%%%%%%%%%%%%%%%%%%%%%%%%%%
\subsection*{Cotangent bundles}
%%%%%%%%%%%%%%%%%%%%%%%%%%%%%%%%%%%%%%%%%%%%%%%%%%%%%%%%%%%%%%%%%%%%%%%%

Here is an example of a Hamiltonian action on a strict conformal
symplectic manifold which is not of Lee type, but where the conclusion
of the convexity theorem nevertheless holds.  Let $Q$ be a connected
$K$-manifold with $H_\bas^1(Q)\ne0$.  Let $M=T^*Q$ be its cotangent
bundle and $\pi\colon M\to Q$ the projection.  The $K$-action on $Q$
lifts naturally to a $K$-action on $M$.  Let $\alpha\in\Omega^1(M)$ be
the tautological $1$-form; then $\omega_0=d\alpha$ is the standard
symplectic form on $M$.  We modify $\omega_0$ as follows: we choose a
closed but not exact basic $1$-form $\theta_Q\in\Omega_\bas^1(Q)$, and
we put $\theta=\pi^*\theta_Q$ and $\omega=d_\theta\alpha$.  Then
$(\omega,\theta)$ is a $K$-invariant strict conformal symplectic
structure on $M$.  For $\xi\in\lie{k}$ define a function $\Phi^\xi$ on
$M$ by $\Phi^\xi=-\iota(\xi_M)\alpha$; then
\[
d_\theta\Phi^\xi=-L_\theta(\xi_M)\alpha+\iota(\xi_M)d_\theta\alpha=
\iota(\xi_M)\omega,
\]
because $\theta$ is basic.  So the action is Hamiltonian with moment
map $\Phi$.  However, the action is not of Lee type.  If there existed
a Lee element $\zeta\in\lie{k}$, we would have
$1=\Phi^\zeta=-\iota(\zeta_M)\alpha$, i.e.\ $\beta(\zeta_M)=-1$ for
all $q\in Q$ and $\beta\in T_q^*Q$.  Taking $\beta=(\theta_Q)_q$ and
using that $\theta_Q$ is basic gives $0=(\theta_Q)_q(\zeta_M)=-1$, a
contradiction.  Nevertheless the intersection $\Phi(M)\cap C$ is
convex, indeed a rational convex polyhedral cone.  This follows from
the fact that $\Phi$ is also the moment map for the standard
symplectic form $\omega_0$ and from the symplectic convexity theorem
for cotangent bundles,~\cite[Theorem~7.6]{Sj98}.

This example prompts the question whether the Lee type hypothesis of
Theorem~\ref{theorem;main} can be weakened.  Let $(M,\omega,\theta)$
be a strict conformal symplectic manifold equipped with a Hamiltonian
$K$-action.  Let $A$ be the Lee vector field and let $\F_A$ be the
(possibly singular) $1$-dimensional foliation of $M$ generated by $A$.
Define
\[
\lie{n}_A= \{\,\xi\in\lie{k}\mid\text{$\xi_M$ is tangent to
  $\F_A$}\,\};
\]
then $\lie{n}_A$ is an ideal of $\lie{k}$.  Let $N_A$ be the normal
subgroup of $K$ with Lie algebra $\lie{n}_A$.  Then $N_A\cdot x\subset
K\cdot x\cap\F_A(x)$ for all $x\in M$, where $\F_A(x)$ denotes the
$\F_A$-leaf of $x$, and hence $T_x(N_A\cdot x)\subset T_x(K\cdot
x)\cap T_x\F_A$.  If the action is of Lee type with Lee element
$\zeta$ we have $\lie{n}_A=\lie{n}+\R\zeta$, where $\lie{n}$ is the
kernel of the infinitesimal action $\lie{k}\to\X(M)$, and therefore
\begin{equation}\label{equation;clean}
T_x(N_A\cdot x)=T_x(K\cdot x)\cap T_x\F_A
\end{equation}
for all $x$.  In the cotangent example above the Lee vector field is
tangent to the fibres of $\pi\colon M\to Q$, so we have $T_x(K\cdot
x)\cap T_x\F_A=0$ for all $x$, and therefore~\eqref{equation;clean} is
satisfied as well.

\begin{problem}\label{problem;lee}
Suppose the $K$-action on $M$ satisfies the ``cleanness''
condition~\eqref{equation;clean} and that the moment map $\Phi\colon
M\to\lie{k}^*$ is proper.  Do the conclusions of the convexity
theorem, Theorem~\ref{theorem;main}, hold for $M$?
\end{problem}

%%%%%%%%%%%%%%%%%%%%%%%%%%%%%%%%%%%%%%%%%%%%%%%%%%%%%%%%%%%%%%%%%%%%%%%%
\subsection*{Contact mapping tori of rank $1$}
%%%%%%%%%%%%%%%%%%%%%%%%%%%%%%%%%%%%%%%%%%%%%%%%%%%%%%%%%%%%%%%%%%%%%%%%

%%%%%%%%%%%%%%%%%%%%%%%%%%%%%%%%%%%%%%%%%%%%%%%%%%%%%%%%%%%%%%%%%%%%%%%%
\subsubsection*{Contact ellipsoids}
%%%%%%%%%%%%%%%%%%%%%%%%%%%%%%%%%%%%%%%%%%%%%%%%%%%%%%%%%%%%%%%%%%%%%%%%

Let $c>1$ and let $h\colon\C^n\to\C^n$ be the homothety $h(z)=cz$.
Then $h^*\tilde{\omega}=c^2\tilde{\omega}$, where
$\tilde{\omega}=\frac{i}2\sum dz_j\wedge d\bar{z}_j$ is the standard
symplectic form.  Choose numbers
$\zeta_1\ge\zeta_2\ge\cdots\ge\zeta_n>0$ and let
$\tilde{f}(z)=\log\bigl(\frac12\sum_j\zeta_j\abs{z_j}^2\bigr)$.  Then
$\tilde{f}(h(z))=\tilde{f}(z)+2\log c$.  The pair
$(e^{-\tilde{f}}\tilde\omega,d\tilde{f})$ defines a global conformal
symplectic structure on $\tilde{M}=\C^n\setminus\{0\}$, which is
invariant under the cyclic group $\Gamma$ generated by $h$.  This
group acts properly and freely on $\tilde{M}$ because $c>1$, so the
quotient $M=\tilde{M}/\Gamma$ is a conformal symplectic manifold
called a \emph{Hopf manifold}.  As a conformal symplectic manifold $M$
is the trivial contact mapping torus obtained from the contact
ellipsoid
\[
Q=Q_\zeta=\tilde{f}^{-1}(0)=
\biggl\{z\in\C^n\biggm|\frac12\sum_j\zeta_j\abs{z_j}^2=1\biggr\}
\]
and the identity automorphism $Q\to Q$.  As a manifold
\[M=\bb{S}^1\times\bb{S}^{2n-1}\]
and therefore its second Betti number vanishes.  Hence it does not
admit any symplectic, let alone K\"ahler, structures, although it
admits many complex structures.  The minimal presentation of $M$ is
$(\tilde{M},\tilde{\omega},\Gamma,\chi,\tilde{f})$, where
$\chi\colon\Gamma\to\R$ is defined by $\chi(h^n)=2n\log c$.

We equip $\lie{u}(n)=\Lie(\U(n))$, the Lie algebra of anti-Hermitian
matrices, with the trace form $\inner{\xi,\eta}=-\tr(\xi\eta)$, and
use this to identify $\lie{u}(n)^*\cong\lie{u}(n)$.  The standard
action of the unitary group $\U(n)$ on $\C^n$ commutes with the
$\Gamma$-action and is Hamiltonian relative to $\tilde{\omega}$ with
moment map given by $\tilde{\Phi}(z)=\frac{i}2zz^*$.  Here we think of
$z$ as a column vector and denote by $z^*$ its conjugate transpose.
Let $K$ be a closed subgroup of $\U(n)$ whose Lie algebra $\lie{k}$
contains $\zeta=i\diag(\zeta_1,\zeta_2,\dots,\zeta_n)$ as a central
element.  (If the $\zeta_j$ are pairwise distinct, this forces $K$ to
be the diagonal maximal torus of $\U(n)$.)  The $K$-action is
Hamiltonian with moment map given by
$\tilde{\Phi}^\xi(z)=-\frac{i}2z^*\xi z$ for $\xi\in\lie{k}$ and
$z\in\C^n$.  We have $e^{\tilde{f}}=\tilde{\Phi}^\zeta$, so the
induced $K$-action on $M$ is Hamiltonian with moment map
\[
\Phi^\xi([z])=e^{-\tilde{f}(z)}\tilde{\Phi}^\xi(z)=\frac{z^*\xi
  z}{z^*\zeta z}
\]
and is of Lee type with central Lee element $\zeta$.  The moment map
$\tilde{\Phi}$ is proper (because $\tilde{f}$ is proper) and the
moment body $\Delta(\tilde{M})$ is the rational convex polyhedral cone
spanned by the highest weights of the irreducible $K$-modules
occurring in the symmetric algebra of $\C^n$
(\cite[Theorem~4.9]{Sj98}).  The moment body $\Delta(M)=\Delta(Q)$ is
the convex polytope $\Delta(\tilde{M})\cap\H_\zeta$.  It is
semirational if and only if the $\Q$-linear subspace of $\R$ spanned
by $\zeta_1$, $\zeta_2,\dots$,~$\zeta_n$ is one-dimensional.

%%%%%%%%%%%%%%%%%%%%%%%%%%%%%%%%%%%%%%%%%%%%%%%%%%%%%%%%%%%%%%%%%%%%%%%%
\subsubsection*{Contact hyperboloids}
%%%%%%%%%%%%%%%%%%%%%%%%%%%%%%%%%%%%%%%%%%%%%%%%%%%%%%%%%%%%%%%%%%%%%%%%

Fix numbers $\zeta_1\ge\zeta_2\ge\cdots\ge\zeta_n>0$ and let $\zeta$
be the diagonal matrix $\diag(\zeta_1,\zeta_2,\dots,\zeta_n)$.  Write
vectors in $\R^{2n}$ as pairs $(x,y)\in\R^n\times\R^n$ and define the
split quadratic form $q=q_\zeta$ on $\R^{2n}$ by $q(x,y)=x^\top\zeta
y=\sum_j\zeta_j x_jy_j$, where $x^\top$ denotes the transpose of the
column vector $x$.  Let
\[\tilde{M}=\{\,(x,y)\in\R^{2n}\mid q(x,y)>0\,\}\]
and define $\tilde{f}\colon\tilde{M}\to\R$ by $\tilde{f}(x,y)=\log
q(x,y)$.  Let $\tilde\omega=\sum_jdx_j\wedge dy_j$ be the standard
symplectic form.  Then
$\bigl(\tilde{M},e^{-\tilde{f}}\tilde{\omega},d\tilde{f},\tilde{B}\bigr)$
is a conformal symplectic manifold of the first kind with Lee and
anti-Lee vector fields given by
\[
\tilde{A}=\sum_j\zeta_j\biggl(x_j\frac{\partial}{\partial
  x_j}-y_j\frac{\partial}{\partial y_j}\biggr),\qquad
\tilde{B}=\frac12\sum_j\biggl(x_j\frac{\partial}{\partial
  x_j}+y_j\frac{\partial}{\partial y_j}\biggr).
\]
By Theorem~\ref{theorem;structure} $\tilde{M}$ is the conformal
symplectic cylinder over the contact ``hyperboloid''
\[Q=Q_\zeta=\tilde{f}^{-1}(0)=\{\,(x,y)\in\R^{2n}\mid q(x,y)=1\,\},\]
which is equipped with the contact form
$\iota(\tilde{B})\tilde{\omega}|_Q=
\frac12\sum_j(x_j\,dy_j-y_j\,dx_j)\big|_Q$.

Recall that $\lie{sp}(2n,\R)$, the Lie algebra of the symplectic group
$\Sp(2n,\R)$, consists of all block matrices
$\bigl(\begin{smallmatrix}a_{11}&a_{12}\\a_{21}&a_{22}\end{smallmatrix}\bigr)$
with blocks $a_{ij}\in\lie{gl}(n,\R)$ satisfying
\[a_{12}=a_{12}^\top,\quad a_{21}=a_{21}^\top,\quad a_{22}=-a_{11}^\top.\]
Similarly, the orthogonal group $\group{O}(\zeta)$ of the quadratic
form $q_\zeta$ has Lie algebra $\lie{o}(\zeta)$ consisting of all
block matrices
$\bigl(\begin{smallmatrix}a_{11}&a_{12}\\a_{21}&a_{22}\end{smallmatrix}\bigr)$
with blocks $a_{ij}\in\lie{gl}(n,\R)$ satisfying
\[
a_{12}=-\zeta^{-1}a_{12}^\top\zeta,\quad
a_{21}^\top=-\zeta^{-1}a_{21}\zeta,\quad
a_{22}=-\zeta^{-1}a_{11}^\top\zeta.
\]
Let $\lie{g}=\{\,\eta\in\lie{gl}(n,\R)\mid[\eta,\zeta]=0\,\}$ be the
centralizer of $\zeta\in\lie{gl}(n,\R)$ and $G$ the closed connected
subgroup of $\GL(n,\R)$ with Lie algebra $\lie{g}$.  The map
$\lie{gl}(n,\R)\to\lie{gl}(2n,\R)$ defined by $\eta\mapsto
\bigl(\begin{smallmatrix}\eta&0\\0&-\eta^\top\end{smallmatrix}\bigr)$
  is an isomorphism
\[
\begin{tikzcd}
\lie{g}\ar[r,"\cong"]&\lie{sp}(2n,\R)\cap\lie{o}(\zeta),
\end{tikzcd}
\]
which we will use to identify $\lie{g}$ with
$\lie{sp}(2n,\R)\cap\lie{o}(\zeta)$ and $G$ with the identity
component of $\Sp(2n,\R)\cap\group{O}(\zeta)$.  Thus $G$ acts on
$\R^{2n}=T^*\R^n$ by the lifted cotangent action
$g\cdot(x,y)=\bigl(gx,(g^{-1})^\top y\bigr)$, which preserves both the
symplectic form $\tilde{\omega}$ and the quadratic form $q_\zeta$.  A
split Cartan subalgebra of~$\lie{g}$ is
\[
\lie{h}=\{\,\eta=\diag(\eta_1,\eta_2,\dots,\eta_n)\mid
\eta_1,\eta_2,\dots,\eta_n\in\R\,\}.
\]
Observe that the Lee vector field is in $\lie{h}$, namely
$\tilde{A}=\zeta$.  Let $H$ be the Cartan subgroup of $G$ with Lie
algebra $\lie{h}$.  Choose
$\eta=\diag(\eta_1,\eta_2,\dots,\eta_n)\in\lie{h}$
%%
%% which is centralized by $G$ and such for each $j$ the numbers
%% $\zeta_j$ and $\eta_j$ are independent over $\Q$.
%%
and put
\[
h_0=\exp\zeta,\quad h_1=\exp\eta\in H.
\]
These elements generate subgroups $\inner{h_0}$ and $\inner{h_1}$ of
$H$, both of which act properly on $Q$, but the subgroup
$\inner{h_0,h_1}\cong\Z^2$ does not act properly.  However, choose
$c\ne0$ and define $\gamma_0$, $\gamma_1\colon\tilde{M}\to\tilde{M}$
by
\[
\gamma_0(x,y)=h_0\cdot(x,y),\qquad\gamma_1(x,y)=
e^{\frac12c}h_1\cdot(x,y).
\]
Then $\Gamma=\inner{\gamma_0,\gamma_1}\cong\Z^2$ acts properly and
freely on $\tilde{M}$.  We have
$\gamma_i^*\tilde{\omega}=e^{c_i}\tilde{\omega}$ and
$\gamma_i^*\tilde{f}=\tilde{f}+c_i$, where $c_0=0$ and $c_1=c$, so the
quotient $M=\tilde{M}/\Gamma$ is a conformal symplectic manifold with
presentation $(\tilde{M},\tilde{\omega},\Gamma,\chi,\tilde{f})$, where
$\chi\colon\Gamma\to\R$ is defined by $\chi(\gamma_i)=c_i$.  Since
$\ker(\chi)=\inner{\gamma_0}$, the minimal presentation of $M$ is
$\tilde{M}/\inner{\gamma_0}$ and the rank of $M$ is $1$.  As a
conformal symplectic manifold $M$ is the contact mapping torus of the
contact manifold $Q/\inner{h_0}$ with respect to the contact
automorphism $h_1\colon Q/\inner{h_0}\to Q/\inner{h_0}$.  The Lee
vector field of $M$ is $A=p_*\tilde{A}$, which is periodic since
$\exp\tilde{A}=\gamma_0$.  To compute the structure of $M$ as a
manifold we may assume that $\zeta_1=\zeta_2=\cdots=\zeta_n=1$, in
which case $G$ is equal to $\GL_0(n,\R)$, the identity component of
$\GL(n,\R)$, which acts transitively on $Q$.  Using the Mostow
decomposition theorem for homogeneous spaces we get
\begin{align*}
Q&\cong\GL_0(n,\R)/\GL_0(n-1,\R)\cong(\SO(n)\times\R^n)/\SO(n-1)\\
&\cong\R\times(\SO(n)\times\R^{n-1})/\SO(n-1)\cong\R\times
T^*\bb{S}^{n-1},
\end{align*}
and therefore
\[M\cong\bb{S}^1\times\bb{S}^1\times T^*\bb{S}^{n-1}.\]

Next we display some Hamiltonian actions of Lee type on $M$.  A
maximal compact subgroup of $G$ is
\[
G_0=G\cap\SO(n),
\]
which is the identity component of
$\{\,g\in\SO(n)\mid[g,\zeta]=0\,\}$.  Let $K_0$ be any closed
connected subgroup of $G_0$ which centralizes $h_1$ and let
$\tilde{K}$ be the closed connected subgroup $K_0\exp(\R\zeta)$ of
$G_0$.  Then the $\tilde{K}$-action on $\tilde{M}$ descends to a
$\tilde{K}$-action on $M=\tilde{M}/\Gamma$.  The subgroup
$\inner{h_0}$ of $\tilde{K}$ acts trivially on $M$, so we get an
action of the compact Lie group
\[
K=\tilde{K}/\inner{h_0}\cong\exp(\R\zeta)/\exp(\Z\zeta)\times
K_0\cong\bb{S}^1\times K_0
\]
on $M$, which is Hamiltonian and of Lee type with central Lee element
$\zeta$.  The standard action of $\SO(n)$ on $\R^{2n}$ is Hamiltonian
relative to $\tilde{\omega}$ with moment map given by
$\tilde{\Phi}(x,y)=xy^\top-yx^\top$.  The contact manifold $Q$ is not
compact and the moment map for the $K_0$-action on $Q$ is not proper
(unless $K_0=\SO(n)$), but nevertheless the moment body $\Delta_0(Q)$
for the $K_0$-action is convex, in fact a rational convex polyhedral
cone, by Theorem~\ref{theorem;real} and Example~\ref{example;real}
below.  Therefore the moment body for the $K$-action is
$\Delta(Q)=\{1\}\times\Delta_0(Q)\subseteq\R\times\lie{k}_0^*$, and it
follows from Theorem~\ref{theorem;conformal-first} that
$\Delta(M)=\Delta(Q)$.

%%%%%%%%%%%%%%%%%%%%%%%%%%%%%%%%%%%%%%%%%%%%%%%%%%%%%%%%%%%%%%%%%%%%%%%%
\subsection*{Contact mapping tori of rank $\ge2$}
%%%%%%%%%%%%%%%%%%%%%%%%%%%%%%%%%%%%%%%%%%%%%%%%%%%%%%%%%%%%%%%%%%%%%%%%

Higher rank contact mapping tori with interesting group actions are
harder to come by.  Below is a slightly contrived class of examples,
which is based on the simple observation that the product $P\times Q$
of an exact symplectic manifold $(P,\omega_P=d\alpha_P)$ and a contact
manifold $(Q,\alpha_Q)$ is a contact manifold with contact form
$\alpha=\pr_P^*\alpha_P+\pr_Q^*\alpha_Q$.

Suppose we are given $k$ commuting strict contactomorphisms $g_i\colon
Q\to Q$.  These generate a strict contact action of $\Gamma=\Z^k$,
which is not necessarily free or proper.  Now let us take $P=T^*\R^k$
with its standard $1$-form $\alpha_0=-\sum_jy_j\,dx_j$ and standard
symplectic structure $\omega_0=d\alpha_0$ and form the contact
manifold $T^*\R^k\times Q$.  Then $\Gamma=\Z^k$ acts freely and
properly on $\R^k$ by translations, and the cotangent lift of this
action to $T^*\R^k$ preserves $\alpha_0$.  Therefore the diagonal
$\Gamma$-action on $T^*\R^k\times Q$ is a proper and free strict
contact action.  This allows us for any choice of real numbers $c_1$,
$c_2,\dots$,~$c_k$ to form the contact mapping torus of $T^*\R^k\times
Q$,
\[
\hM=\hM_{(g_1,g_2,\dots,g_k;c_1,c_2,\dots,c_k)}=(\R\times
T^*\R^k\times Q)/\Gamma,
\]
which we will call the \emph{fattened contact mapping torus} of $Q$
with respect to the $g_i$ and the $c_i$.  Write points of $\R\times
T^*\R^k\times Q$ as quadruplets $(t,x,y,q)$ with $t\in\R$,
$x$,~$y\in\R^k$ and $q\in Q$.  Let $\tilde{f}(t,x,y,q)=t$ and
\[
\tilde\omega=
e^t\bigl(d(\alpha_0+\alpha_Q)+dt\wedge(\alpha_0+\alpha_Q)\bigr).
\]
Also define $\chi\colon\Gamma\to\R$ by $\chi(\gamma_i)=c_i$, where the
$\gamma_i$ are the standard generators of $\Gamma$.  Then $(\R\times
T^*\R^k\times Q,\tilde\omega,\Gamma,\chi,\tilde{f})$ is a presentation
of $\hM$.  If the $c_i$ are independent over $\Q$, this presentation
is minimal and $\hM$ has rank $k$.

The fattened mapping torus $\hM$ relates to the usual mapping torus
$M$ as follows.  The left translation action of $\R^k$ on $\R\times
T^*\R^k\times Q$ defined by $u\cdot(t,x,y,q)=(t,x+u,y,q)$ descends to
a Hamiltonian $\R^k$-action on $\hM$ whose moment map is linear
momentum $\mu([t,x,y,q])=y$.  The reduced space of $\hM$ with respect
to $\R^k$ is
\[\hM\qu\R^k=M=M_{(g_1,g_2,\dots,g_k;c_1,c_2,\dots,c_k)}.\]
The reduced space $M$ exists as a manifold if and only if the
$\Gamma$-action on $\R\times Q$ is free and proper.  If $M$ is not a
manifold, we can interpret it as a conformal symplectic stack in the
spirit of~\cite{HSZ18}: the zero fibre
$\mu^{-1}(0)=(\R\times\R^k\times Q)/\Gamma$ is a conformal
presymplectic manifold in the sense of
Appendix~\ref{section;conformal-presymplectic}, and the quotient map
$\mu^{-1}(0)\to M$ is an atlas of the stack $M$.

Suppose $K$ acts on $Q$ by strict contactomorphisms.  Such an action
is Hamiltonian with moment map $\Psi\colon Q\to\lie{k}^*$ given by
$\Psi^\xi(q)=-\iota(\xi_Q)\alpha_Q$.  Supposing that the $K$-action
commutes with the $\Gamma$-action, we get a Hamiltonian $K$-action on
the fattened contact mapping torus defined by
$k\cdot[t,x,y,q]=[t,x,y,k\cdot q]$.  The moment map
$\widehat{\Phi}\colon\hM\to\lie{k}^*$ is given by
$\widehat{\Phi}([t,x,y,q])=\Psi(q)$ and the moment body
$\Delta(\hM)=\widehat{\Phi}(\hM)\cap C$ is equal to $\Delta(Q)$.

Examples of contact mapping tori of arbitrary rank can now be produced
from the contact quadrics $(Q_\zeta,\alpha)$ considered earlier in
this section.  In the elliptic case, where $K$ is a subgroup of the
centralizer $Z_{\U(n)}(\zeta)$, any choice of elements $t_1$,
$t_2,\dots$,~$t_k$ of the centre $Z(K)$, which is a subgroup of the
maximal torus $T$ of $\U(n)$, generates a strict contact action of
$\Gamma$ on $Q_\zeta$ which commutes with $K$.  In the hyperbolic
case, where $K$ is a compact subgroup of
$Z_{\GL(n,\R}(\zeta)/\inner{\exp(\zeta)}$, we may take any elements
$h_1$, $h_2,\dots$,~$h_k$ of the split maximal torus $H$ of
$\GL(n,\R)$ which commute with $K$ to generate a strict contact action
of $\Gamma$ on $Q_\zeta$ which commutes with $K$.  In either case, we
can then form the fattened mapping torus $\hM$, which is a conformal
Hamiltonian $K$-manifold of Lee type with moment body the convex
polyhedron $\Delta(\hM)=\Delta(Q_\zeta)$.

%%%%%%%%%%%%%%%%%%%%%%%%%%%%%%%%%%%%%%%%%%%%%%%%%%%%%%%%%%%%%%%%%%%%%%%%
\appendix
%%%%%%%%%%%%%%%%%%%%%%%%%%%%%%%%%%%%%%%%%%%%%%%%%%%%%%%%%%%%%%%%%%%%%%%%

%%%%%%%%%%%%%%%%%%%%%%%%%%%%%%%%%%%%%%%%%%%%%%%%%%%%%%%%%%%%%%%%%%%%%%%%
\section{An instance of real symplectic convexity}\label{section;real}
%%%%%%%%%%%%%%%%%%%%%%%%%%%%%%%%%%%%%%%%%%%%%%%%%%%%%%%%%%%%%%%%%%%%%%%%

The next theorem, which is a special case of a result
from~\cite{OS00}, is used in Section~\ref{section;examples} (contact
hyperboloids).  Let $K$ be a compact connected Lie group with a choice
of maximal torus $T$ and a closed chamber $C$ in $\lie{t}^*$.  The
result concerns certain real algebraic subvarieties of a
finite-dimensional unitary $K$-module $V$.  The imaginary part of the
Hermitian inner product $\inner{{\cdot},{\cdot}}_V$ on $V$ is a
symplectic form $\omega_V$, and a moment map for the $K$-action on $V$
is given by
\[\Phi_V^\xi(v)=\frac12\omega_V(\xi(v),v)=\frac{i}2\inner{v,\xi(v)}_V\]
for $\xi\in\lie{k}$ and $v\in V$.  Let $W=V^\C=\C\otimes V$ be the
complexified $K$-module, let $J$ be the complex structure of $V$, and
let $W=V^{1,0}\oplus V^{0,1}$ be the usual eigenspace decomposition of
$W$ respect to $J$.  We have $V^{0,1}=\overline{V^{1,0}}$, where the
conjugation is defined with respect to the totally real subspace $V$
of $W$.  The map $f(v)=\frac12(v-iJv)$ is a complex linear isomorphism
$V\cong V^{1,0}$.  There is a unique $K$-module structure on $V^{1,0}$
that makes $f$ a $K$-module isomorphism.  Likewise, the map
$\bar{f}(v)=\frac12(v+iJv)$ is a complex antilinear isomorphism
$V\cong V^{0,1}$, and there is a unique $K$-module structure on
$V^{0,1}$ that makes $\bar{f}$ an anti-isomorphism of $K$-modules.
Thus $W=V^{1,0}\oplus V^{0,1}$ becomes a complex $K\times K$-module
isomorphic to $V\oplus V^*$.  We will write elements of $W$ as pairs
$\bigl(f(v_1),\bar{f}(v_2)\bigr)$ with $v_1$, $v_2\in V$.  In this
notation complex conjugation is written as
\[
\bigl(f(v_1),\bar{f}(v_2)\bigr)\longmapsto
\bigl(f(v_2),\bar{f}(v_1)\bigr).
\]
The real inner product $\Re\inner{{\cdot},{\cdot}}_V$ on $V$ extends
to a Hermitian inner product $\inner{{\cdot},{\cdot}}_W$ on $W$, and
the symplectic form $\omega_W=\Im\inner{{\cdot},{\cdot}}_W$ satisfies
$f^*\omega_W=\frac12\omega_V$ and
$\bar{f}^*\omega_W=-\frac12\omega_V$.  Therefore the moment map for
the $K\times K$-action on $W$ is given by
\begin{equation}\label{equation;real}
\Phi_W^{(\xi_1,\xi_2)}\bigl(f(v_1),\bar{f}(v_2)\bigr)=
\frac12\bigl(\Phi_V^{\xi_1}(v_1)-\Phi_V^{\xi_2}(v_2)\bigr).
\end{equation}

\begin{theorem}\label{theorem;real}
Let $V$ be a unitary $K$-module.  Let $X$ be a $K$-invariant
irreducible real algebraic subvariety of $V$.  Let $X^\C\subseteq
V^\C$ be the complexification of $X$.  Suppose that $X^\C$ is $K\times
K$-invariant and that $X$ contains at least one smooth point of
$X^\C$.  Then the moment body $\Delta(X)=\Phi_V(X)\cap C$ of $X$ is
the convex hull of all dominant weights $\lambda\in C$ such that the
$K\times K$-module with highest weight $(\lambda,-\lambda)$ occurs in
the coordinate ring of $X^\C$.  In particular $\Delta(X)$ is a
rational convex polyhedral cone in $\lie{k}^*$.
\end{theorem}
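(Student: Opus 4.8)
The plan is to recognize this statement as the \emph{group case} of the real convexity theorem of O'Shea--Sjamaar~\cite{OS00}, applied to the complex affine variety $X^{\C}\subseteq W=V^{\C}$ equipped with the antiholomorphic involution given by complex conjugation. First I would set up the involutive data. Complex conjugation $\tau$ on $W$ with respect to the totally real subspace $V$ acts by $\tau\bigl(f(v_1),\bar f(v_2)\bigr)=\bigl(f(v_2),\bar f(v_1)\bigr)$; it is antisymplectic, i.e.\ $\tau^*\omega_W=-\omega_W$ (since conjugation reverses the Hermitian form), and its fixed-point set is exactly $V$, a Lagrangian real form of $W$. On the group side I would take the involution $\sigma$ of $K\times K$ that swaps the two factors, so that $(K\times K)^{\sigma}$ is the diagonal $\Delta K\cong K$ and the restricted action of $\Delta K$ on $V=W^{\tau}$ is the original $K$-action. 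I would then verify the two compatibility conditions, namely $\tau\bigl((g_1,g_2)\cdot w\bigr)=\sigma(g_1,g_2)\cdot\tau(w)$ and the moment intertwining $\Phi_W\circ\tau=-\sigma^*\circ\Phi_W$; both follow by a direct computation from~\eqref{equation;real}. In these terms $X=(X^{\C})^{\tau}$ is the real locus of $X^{\C}$, and we are in the symmetric pair $(K\times K,\Delta K)$ whose associated symmetric space is the group $K$ itself.

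Next I would match this data with the ingredients of~\cite{OS00}. The $(-1)$-eigenspace of $\sigma$ on $\lie{k}\oplus\lie{k}$ is the anti-diagonal $\lie{p}=\{(\xi,-\xi)\}$, and a maximal abelian subspace is $\lie{a}=\{(\eta,-\eta)\mid\eta\in\lie{t}\}\cong\lie{t}$; the restricted root system is the root system of $K$, so the positive restricted Weyl chamber is identified with $C$. Dually $\lie{p}^*=\{(\phi,-\phi)\}$, and putting $v_1=v_2=v$ in~\eqref{equation;real} gives $\Phi_W\bigl(f(v),\bar f(v)\bigr)=\frac12\bigl(\Phi_V(v),-\Phi_V(v)\bigr)\in\lie{p}^*$. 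Hence, under the identification $\lie{p}^*\cong\lie{k}^*$ sending $(\phi,-\phi)$ to $2\phi$, the restriction of $\Phi_W$ to $X\subseteq W^{\tau}$ is exactly $\Phi_V|_X$. The remaining hypotheses of~\cite{OS00} are that $X^{\C}$ be irreducible and $K\times K$-invariant and that its real locus be nonempty and meet the smooth locus; these are exactly guaranteed by the assumption that $X$ is irreducible and contains a smooth point of $X^{\C}$, which forces $X^{\C}$ to be the irreducible complexification of $X$.

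With the correspondence in place, the real convexity theorem of~\cite{OS00} identifies $\Delta(X)=\Phi_V(X)\cap C$ with the intersection of the $K\times K$-moment body $\Delta_{K\times K}(X^{\C})$ with the chamber $\lie{a}^*_+$, identified with $C$. I would then compute $\Delta_{K\times K}(X^{\C})$ by the affine convexity theorem~\cite[Theorem~4.9]{Sj98}: it is the closed rational polyhedral cone spanned by the highest weights of the irreducible $K\times K$-modules occurring in the coordinate ring $\C[X^{\C}]$. Intersecting this cone with the anti-diagonal $\lie{a}^*=\{(\phi,-\phi)\}$ and using dominance selects precisely the contributions whose highest weight has the form $(\lambda,-\lambda)$, yielding the convex hull in the statement. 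Rationality and polyhedrality of $\Delta(X)$ are then inherited from those of $\Delta_{K\times K}(X^{\C})$, since the relevant highest weights lie in the weight lattice and only finitely many extremal rays occur.

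The main obstacle I anticipate is the careful verification that the hypotheses of~\cite{OS00} are genuinely met for a \emph{possibly singular} variety: one must confirm that $X$ is exactly the $\tau$-fixed locus of $X^{\C}$ (not merely contained in it), that its meeting the smooth locus of $X^{\C}$ lets the real convexity theorem pass through the singularities, and that irreducibility of $X^{\C}$ does follow from the stated irreducibility of $X$ together with the smooth-point hypothesis. A secondary, more bookkeeping-heavy point is translating the abstract intersection with the restricted chamber $\lie{a}^*_+$ into the clean representation-theoretic condition that $(\lambda,-\lambda)$ occurs in $\C[X^{\C}]$; this requires reconciling the sign conventions in~\eqref{equation;real}, in particular the dualization $V^{0,1}\cong V^*$ in the second factor, with the chosen positive Weyl chamber $C$.
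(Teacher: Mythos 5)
Your proposal follows essentially the same route as the paper: the same symmetric pair $(K\times K,\Delta K)$ with the factor-swapping involution $\sigma_U$ and complex conjugation $\sigma_W$, the same verification that $\sigma_W(u\cdot w)=\sigma_U(u)\cdot\sigma_W(w)$ and that $\Phi_W$ restricted to $X$ agrees with $\Phi_V$ under $\lie{p}^*\cong\lie{k}^*$, and the same reduction to the real convexity theorem of O'Shea--Sjamaar. The only cosmetic difference is that the paper cites \cite[Theorem~6.3]{OS00} directly for the highest-weight description, whereas you reassemble it from the real convexity theorem together with \cite[Theorem~4.9]{Sj98}; both are correct.
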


\begin{proof}
The strategy is to interpret $\lie{k}$ as the tangent space to the
symmetric space $(K\times K)/K$ and $X$ as a totally real Lagrangian
in $X^\C$.  Let $U=K\times K$ and $W=V^\C$.  Define involutions
$\sigma=\sigma_U$ of $U$ and $\sigma=\sigma_W$ of $W$ by
$\sigma_U(k_1,k_2)=(k_2,k_1)$ and $\sigma_W(w)=\bar{w}$.  Then
$\sigma_W$ is antilinear and antisymplectic, and
$X=(X^\C)^{\sigma_W}$, the set of $\sigma_W$-fixed points of $X$.
Moreover,
\begin{align*}
\sigma_W\bigl((k_1,k_2)\cdot(f(v_1),\bar{f}(v_2))\bigr)&=
\sigma_W\bigl(f(k_1v_1),\bar{f}(k_2v_2)\bigr)\\
&=\bigl(f(k_2v_2),\bar{f}(k_1v_1)\bigr)\\
&=\bigl(k_2,k_1)\cdot(f(v_2),\bar{f}(v_1)\bigr),
\end{align*}
which shows that $\sigma_W(u\cdot w)=\sigma_U(u)\cdot\sigma_W(w)$ for
all $u\in U$ and $w\in W$.  It follows that
$\Phi_W(\sigma_W(w))=-\sigma_U^*\bigl(\Phi_W(w)\bigr)$ for $w\in W$,
where $\sigma_U^*$ is the transpose of the involution $\sigma_{U,*}$
of $\lie{u}=\Lie(U)$ induced by $\sigma_U$.  Let
$\lie{p}^*=\{\,\mu\in\lie{u}^*\mid \sigma_U^*(\mu)=-\mu\,\}$.  Then
\[
\lie{p}^*=
\{\,(\lambda,-\lambda)\mid\lambda\in\lie{k}^*\,\}\cong\lie{k}^*,
\]
and the image $\Phi_W(X)$ is a subset of $\lie{p}^*$.  It follows
from~\eqref{equation;real} that
\[
\Phi_W^{(\xi,-\xi)}\bigl(f(v),\bar{f}(v)\bigr)=
\frac12\bigl(\Phi_V^{\xi}(v)-\Phi_V^{-\xi}(v)\bigr)=\Phi_V^{\xi}(v),
\]
so $\Phi_V=\Phi_W|_V$ modulo the identification
$\lie{p}^*\cong\lie{k}^*$.  Hence $\Phi_V(X)=\Phi_W(X)$.  The theorem
now follows from~\cite[Theorem~6.3]{OS00}.
\end{proof}

\begin{example}\label{example;real}
Let $V=T^*\R^n=\R^{2n}$ equipped with the standard symplectic
structure $\omega$ and complex structure
$J=\bigl(\begin{smallmatrix}0&-I\\I&0\end{smallmatrix}\bigr)$.
Then $V$ is a unitary $K$-module for any closed subgroup $K$ of
$\group{O}(n,\R)$.  Let $\zeta=\diag(\zeta_1,\zeta_2,\dots,\zeta_n)$,
where $\zeta_1\ge\zeta_2\ge\cdots\ge\zeta_n>0$ and suppose that $K$
centralizes $\zeta$.  Then the action of $K$ preserves the hyperboloid
\[X=\{\,(x,y)\in V\mid x^\top\zeta y=1\,\}.\]
To complexify the hyperboloid we substitute new variables
$x=\frac12(z+\bar{z})$ and $y=\frac1{2i}(z-\bar{z})$ and obtain
\[
X^\C=\{\,(z,\bar{z})\in V^{1,0}\oplus V^{0,1}\mid z^\top\zeta
z-\bar{z}^\top\zeta\bar{z}=4i\,\}.
\]
Evidently $X^\C$ is unchanged by substitutions of the form $z\mapsto
k_1z$, $\bar{z}\mapsto k_2\bar{z}$ with $k_1$, $k_2\in K$.  The
variables $z$ are coordinates on $V^{1,0}$ and the variables $\bar{z}$
are coordinates on $V^{0,1}$, so we see that $X^\C$ is preserved by
the action of $K\times K$.  It now follows from
Theorem~\ref{theorem;real} that the moment body $\Delta(X)$ of $X$
with respect to the $K$-action is a rational convex polyhedral cone.
\end{example}

\begin{example}\label{example;unreal}
Let $V=\C^n$ equipped with the standard unitary structure.  Then $V$
is a unitary $K$-module for any closed subgroup $K$ of $\U(n)$.  Let
$\zeta$ be as in Example~\ref{example;real} and suppose that $K$
centralizes $\zeta$.  Then $K$ acts on the ellipsoid
\[X=\{\,z\in V\mid z^*\zeta z=1\,\}.\]
The complexified ellipsoid is
\[
X^\C=\{\,(z,\bar{z})\in V^{1,0}\oplus V^{0,1}\mid\bar{z}^\top\zeta
z=1\,\},
\]
which is not preserved by the action of $K\times K$, so
Theorem~\ref{theorem;real} does not apply.  Obviously the moment body
$\Delta(X)$ cannot be a cone because $X$ is compact.  However, the
contact manifold $X$ is the level set of the $\zeta$-component of the
$\U(n)$-action on $\C^n$, so it follows from the contact convexity
theorem~\cite[Theorem~4.5.1]{LS17} that $\Delta(X)$ is a (not
necessarily rational) convex polytope.
\end{example}

%%%%%%%%%%%%%%%%%%%%%%%%%%%%%%%%%%%%%%%%%%%%%%%%%%%%%%%%%%%%%%%%%%%%%%%%
\section{Conformal presymplectic convexity}
\label{section;conformal-presymplectic}
%%%%%%%%%%%%%%%%%%%%%%%%%%%%%%%%%%%%%%%%%%%%%%%%%%%%%%%%%%%%%%%%%%%%%%%%

The conformal symplectic convexity theorem,
Theorem~\ref{theorem;main}, extends without difficulty to conformal
presymplectic manifolds, where the $2$-form is of constant rank less
than the dimension.  The purpose of this appendix is to briefly
explain this generalization, which uses the techniques of~\cite{LS17}.

%%%%%%%%%%%%%%%%%%%%%%%%%%%%%%%%%%%%%%%%%%%%%%%%%%%%%%%%%%%%%%%%%%%%%%%%
\subsection*{Transverse vector fields and basic differential forms}
%%%%%%%%%%%%%%%%%%%%%%%%%%%%%%%%%%%%%%%%%%%%%%%%%%%%%%%%%%%%%%%%%%%%%%%%

Let $\F$ be a foliation of a manifold $P$ and $\X(\F)$ the Lie algebra
of vector fields tangent to $\F$.  Recall from
Section~\ref{section;conformal-pre} that the Lie algebra of foliate
vector fields is the normalizer $N_{\X(P)}(\X(\F))$ of $\X(\F)$ inside
the Lie algebra $\X(P)$ of all vector fields.  The quotient Lie
algebra
\[\X(P,\F)=N_{\X(P)}(\X(\F))/\X(\F)\]
is the Lie algebra of \emph{transverse vector fields} of the
foliation.  A transverse vector field $X$ is not a vector field, but
an equivalence class of vector fields.  The flow of $X$ maps leaves of
$\F$ to leaves of $\F$, but is well-defined only up to a flow along
the leaves of $\F$.  On every transversal of the foliation $X$ induces
a genuine vector field.  (Hence $\X(M,\F)=\X(M/\F)$ if the leaf space
$M/\F$ is a manifold.)

A differential form $\alpha\in\Omega^*(M)$ is \emph{$\F$-basic} if
$\iota(X)\alpha=\iota(X)d\alpha=0$ for all $X\in\X(\F)$.  We denote
the complex of $\F$-basic forms by $\Omega^*(M,\F)$ and the algebra of
$\F$-basic functions by $\ca{C}^\infty(M,\F)=\Omega^0(M,\F)$.

%%%%%%%%%%%%%%%%%%%%%%%%%%%%%%%%%%%%%%%%%%%%%%%%%%%%%%%%%%%%%%%%%%%%%%%%
\subsection*{Conformal presymplectic forms}
%%%%%%%%%%%%%%%%%%%%%%%%%%%%%%%%%%%%%%%%%%%%%%%%%%%%%%%%%%%%%%%%%%%%%%%%

A \emph{conformal presymplectic structure} on a manifold $M$ is a pair
$(\omega,\theta)$, where $\omega$ is a $2$-form of constant rank and
$\theta$ a closed $1$-form satisfying $d_\theta\omega=0$ and
$\iota(X)\theta=0$ for all vector fields $X$ with $\iota(X)\omega=0$.
As in the nondegenerate case we call $\theta$ the \emph{Lee form} and
its cohomology class the \emph{Lee class}, and we say
$(\omega,\theta)$ is \emph{strict} if $\theta$ is not exact, and
\emph{global} if $\theta$ is exact.

Let $(M,\omega,\theta)$ be a conformal presymplectic manifold.  The
kernel of $\omega$ is an involutive subbundle of $TM$, which generates
a (regular) foliation $\F=\F_\omega$ called the \emph{null foliation}
of $\omega$.  The forms $\omega$ and $\theta$ are basic with respect
to this foliation, and $(\omega,\theta)$ induces a conformal
symplectic structure on every transversal of the foliation.  In this
sense a conformal presymplectic structure is a transverse analogue of
a conformal symplectic structure.

Suppose a function $f\in\ca{C}^\infty(M)$ and a vector field
$X\in\X(M)$ satisfy $d_\theta f=\iota(X)\omega$.  Then $f$ must be
$\F$-basic, and $X$ is determined by $f$ only up to an element of
$\X(\F)$.  Moreover, $L_\theta(X)\omega=0$, so
\[\omega([X,Y],Z)=-\omega(Y,[X,Z])-\theta(X)\omega(Y,Z)\]
for all $Y$, $Z\in\X(M)$.  This shows that $[X,Y]\in\X(\F)$ for all
$Y\in\X(\F)$; in other words $X$ is foliate and therefore represents a
transverse vector field $X_f\in\X(M,\F)$, called the \emph{Hamiltonian
  vector field} of $f$.  Thus the Hamiltonian correspondence $f\mapsto
X_f$ is a map
\[\ca{C}^\infty(M,\F)\longto\X(M,\F).\]
In particular the \emph{Lee vector field} is the transverse vector
field $A\in\X(M,\F)$ defined by $A=X_1$.

Suppose $K$ acts on $M$ by foliate transformations.  Then the
infinitesimal action $\lie{k}\to\X(M)$ takes values in
$N_{\X(\F)}(\X(M))$.  Composing this map with the quotient map gives a
homomorphism $\lie{k}\to\X(M,\F)$.  As in the nondegenerate case, we
say the action is \emph{weakly Hamiltonian} if there is a linear
lifting $\Phi^\vee$ of this homomorphism,
\[
\begin{tikzcd}
&\ca{C}^\infty(M,\F)\ar[d]\\
\lie{k}\ar[ur,pos=0.6,dotted,"\Phi^\vee"]\ar[r]&\X(M,\F),
\end{tikzcd}
\]
and the map $\Phi\colon M\to\lie{k^*}$ dual to $\Phi^\vee$ is the
\emph{moment map}.  The $K$-action is \emph{Hamiltonian}, or $M$ is a
\emph{Hamiltonian conformal presymplectic $K$-manifold}, if there
exists a moment map $\Phi$ which is $K$-equivariant.  The action is of
\emph{Lee type} if the transverse Lee vector field $A$ has a foliate
representative of the form $\zeta_M$ with $\zeta\in\lie{k}$.
We can now state the conformal presymplectic convexity theorem.

\begin{theorem}\label{theorem;conformal-presymplectic}
Let $K$ be a compact connected Lie group which acts on a connected
strict conformal presymplectic manifold $(M,\omega,\theta)$ in a
Hamiltonian fashion.  Assume that the $K$-action is clean and of Lee
type, and that the moment map $\Phi\colon M\to\lie{k}^*$ is proper.
Choose a maximal torus $T$ of $K$ and a closed Weyl chamber $C$ in
$\lie{t}^*$, where $\lie{t}=\Lie(T)$.
\begin{enumerate}
\item
The fibres of $\Phi$ are connected and $\Phi\colon M\to\Phi(M)$ is an
open map.
\item
$\Delta(M)=\Phi(M)\cap C$ is a closed convex polyhedral set.
\end{enumerate}
\end{theorem}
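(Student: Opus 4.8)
The plan is to follow the proof of Theorem~\ref{theorem;main} almost verbatim, replacing each symplectic ingredient by its presymplectic counterpart from~\cite{LS17}. The earlier argument rested on three things: the local model of Proposition~\ref{proposition;local-model}, the rescaling map $\varrho$ of~\eqref{equation;rescale}, and the symplectic local convexity theorem. Only the first and third require modification, since $\varrho$ is a purely linear construction on $\lie{k}^*$ that is insensitive to the degeneracy of $\omega$.

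First I would establish the presymplectic analog of the local model. Arguing as in the proof of Proposition~\ref{proposition;equivariant-moment} (the degenerate generalization flagged there), the Hamiltonian hypothesis forces $\theta$ to be $K$-basic and $\omega$ to be $K$-invariant. On an equivariant tubular neighborhood $V=K\cdot S$ of an orbit, Koszul's acyclicity theorem then furnishes a $K$-invariant function $f$ with $\theta|_V=df$, and I would set $\Omega=e^f\cdot\omega|_V$ and $\Psi=e^f\cdot\Phi|_V$. The essential observation is that multiplication by the positive function $e^f$ does not alter the kernel of $\omega$, so $\Omega$ is again of constant rank, with the \emph{same} null foliation $\F$, and $(V,\Omega)$ is a genuine presymplectic Hamiltonian $K$-manifold with equivariant moment map $\Psi$. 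Because cleanness is a condition on $\F$ and the $K$-orbits alone, it transfers unchanged to $(V,\Omega)$.

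Next I would reproduce the local convexity argument of Theorem~\ref{theorem;local-convexity}. Since $\Phi(V)\subseteq\H_\zeta$ by the Lee type assumption while $\Psi$ is a positive multiple of $\Phi$, the identity $\Phi|_V=\varrho\circ\Psi$ holds as before, whence $\phi|_V=\varrho\circ\psi$ with $\psi=q\circ\Psi$. The new input is the \emph{clean} presymplectic local convexity theorem of~\cite{LS17}, which for the clean Hamiltonian action on $(V,\Omega)$ supplies a rational polyhedral cone $\Delta_m$ with apex $\psi(m)$ and a neighborhood basis of $m$ on which $\psi$ is open with connected fibres. Pushing this cone through $\varrho$ by means of Lemma~\ref{lemma;rescale}, precisely as in Theorem~\ref{theorem;local-convexity}, yields a closed convex polyhedral cone $C_m$ with apex $\phi(m)$ together with a neighborhood basis on which $\phi$ is open with connected fibres.

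The point requiring the most care is the appeal to the clean presymplectic local convexity theorem in place of the symplectic one: here cleanness is indispensable, since for a non-clean action the presymplectic moment map need not carry local convexity data. Everything else is formally identical to the conformal symplectic case. The rescaling step in particular goes through verbatim, because $\inner{\psi(m),\zeta}=e^{f(m)}>0$ places $\psi(m)$ in the open halfspace $\H_\zeta^+$, so that $\Delta_m\cap\H_\zeta^+$ is a neighborhood of the apex in $\Delta_m$. Thus the family $(C_m)_{m\in M}$ is a system of local convexity data for the map $\phi=q\circ\Phi$, which is proper because both $\Phi$ and $q$ are proper. The two conclusions of the theorem now follow at once from the local-global principle, Theorem~\ref{l-g-p}, exactly as in the proof of Theorem~\ref{theorem;main}.
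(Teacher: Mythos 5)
Your proposal matches the paper's own (outlined) proof essentially step for step: generalize the local model of Proposition~\ref{proposition;local-model} to the degenerate setting, note that rescaling by $e^f$ preserves the null foliation so that cleanness transfers, replace the symplectic local convexity theorem by the presymplectic one of~\cite{LS17}, and then run the rescaling and local-global arguments unchanged. The approach and all key ingredients are the same, so there is nothing further to compare.
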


\begin{proof}[Outline of proof]
The essential fact that $d_\theta\colon\ca{C}^\infty(M)\to\Omega^1(M)$
is injective (Proposition~\ref{proposition;strict}) remains valid for
conformal presymplectic manifolds, as is clear from the proof
in~\cite{Va85}.  None of the auxiliary results in
Section~\ref{section;hamilton} uses the nondegeneracy of~$\omega$.
The tubular neighbourhood theorem,
Proposition~\ref{proposition;local-model}, generalizes as follows: for
every $m\in M$ and every equivariant tubular neighborhood $V$ of
$K\cdot m$ there exists a $K$-invariant smooth function $f$ on $V$
such that $\theta|_V=df$; the form $\Omega=e^f\cdot\omega|_V$ is
presymplectic, the $K$-action on $(V,\Omega)$ is Hamiltonian with
equivariant moment map $\Psi=e^f\cdot\Phi|_V$, and the null foliation
of $\Omega$ is $\F_\Omega=\F_\omega|_V$.  In particular, the
$K$-action on $V$ is clean with respect to the foliation $\F_\Omega$.
In the proof of the local convexity theorem,
Theorem~\ref{theorem;local-convexity}, instead of appealing to the
symplectic version of the local convexity theorem, we need to appeal
to the presymplectic local convexity
theorem,~\cite[Theorem~2.12.1]{LS17}.  The remainder of the proof, and
the proof of Theorem~\ref{theorem;main}, then go through unchanged.
\end{proof}

%%%%%%%%%%%%%%%%%%%%%%%%%%%%%%%%%%%%%%%%%%%%%%%%%%%%%%%%%%%%%%%%%%%%%%%%

\bibliographystyle{amsplain}

\bibliography{conformal}

%%%%%%%%%%%%%%%%%%%%%%%%%%%%%%%%%%%%%%%%%%%%%%%%%%%%%%%%%%%%%%%%%%%%%%%%
\end{document}